\DeclareMathAlphabet{\mathfrak}{U}{euf}{m}{n}
\SetMathAlphabet{\mathfrak}{bold}{U}{euf}{b}{n}
\begin{document}
\theoremstyle{definition}
\newcommand{\Rbb}{\mathbb{R}}
\newcommand{\Nbb}{\mathbb{N}}
\newcommand{\Tbb}{\mathbb{T}}
\newcommand{\Hcal}{\mathcal{H}}
\newcommand{\To}{\Rightarrow}
\renewcommand{\phi}{\varphi}
\renewcommand{\epsilon}{\varepsilon}
\newcommand{\im}{{\rm{im}}}

\newcommand{\too}{\longrightarrow}
\newcommand{\Too}{\Longrightarrow}

\newcommand{\pf}{\textgt{証明}\ }

\newcommand{\qedsq}{$\hspace{\fill}{\square}$\\}

\newcommand{\sub}{\subset}
\newcommand{\cptsub}{\stackrel{\rm{cpt}}{\subset}}
\newcommand{\densesub}{\stackrel{\rm{dense}}{\subset}}
\newcommand{\opensub}{\stackrel{\rm{open}}{\subset}}
\newcommand{\Zbb}{\mathbb{Z}}
\newcommand{\supp}{{{\rm  supp}}}
\newcommand{\del}{\partial}

\newcommand{\Cinf}{{C^\infty}}
\newcommand{\Cinfc}{{C^\infty_c}}
\newcommand{\halfopen}[1]{{[#1)}}

\newtheorem{df}{Definition}[section]
\newtheorem{thm}[df]{Theorem}
\newtheorem{prp}[df]{Proposition}
\newtheorem{lem}[df]{Lemma}
\newtheorem{fact}[df]{Fact}
\newtheorem{cor}[df]{Corollary}
\newtheorem{ex}[df]{Example}
\newtheorem{rem}[df]{Remark}
\newtheorem{claim}[df]{Claim}
\newtheorem{inpA}{Proposition}
\renewcommand{\theinpA}{}
\newtheorem{inpB}{Proposition}
\renewcommand{\theinpB}{}
\newtheorem{inpC}{Proposition}
\renewcommand{\theinpC}{}
\newtheorem{intc}{Corollary}
\renewcommand{\theintc}{}
\newtheorem{intt}{Theorem}
\renewcommand{\theintt}{}
\makeatletter
\renewcommand{\theequation}{%
	\thesection.\arabic{equation}}
\@addtoreset{equation}{section}
\makeatother

%
\begin{center}
	\Large{Magnitude homology of geodesic metric spaces with an upper curvature bound}
\end{center}
\begin{center}{Yasuhiko Asao \footnote{asao(at)ms.u-tokyo.ac.jp}}\end{center}
\begin{center}
  {\it
  Graduate School of Mathematical Sciences, The University of Tokyo \\
}
\end{center}

\begin{abstract}
In this article, we study the magnitude homology of geodesic metric spaces of curvature $\leq \kappa$, especially ${\rm CAT}(\kappa)$ spaces. We will show that the magnitude homology $MH^{l}_{n}(X)$ of such a meric space $X$ vanishes for small $l$ and all $n > 0$. Conseqently, we can compute a total $\mathbb{Z}$-degree magnitude homology for small $l$ for the shperes $\mathbb{S}^{n}$,  the Euclid spaces $\mathbb{E}^{n}$, the hyperbolic spaces $\mathbb{H}^{n}$, and real projective spaces $\mathbb{RP}^{n}$ with the standard metric. We also show that an existence of closed geodesic in a metric space guarantees the non-triviality of magnitude homology.
\end{abstract}
\section{Introduction}

The magnitude is an invariant of enriched categories introduced by Leinster (\cite{L}) as a generalization of the cardinality of sets, the rank of vector spaces, and the Euler number of topological spaces. In particular, for finite metric spaces, magnitude measures the number of efficient points.

The magnitude homology $MH^{l}_{n}$, introduced by Hepworth-Willerton and Leinster-Shulman (\cite{HW}, \cite{LS}),  is a  bi-graded $\mathbb{Z}$-module indexed by non-negative real numbers $l$ and non-negative integers $n$, defined for general metric spaces. As shown by them, for finite metric spaces, the magnitude homology is a categorification of the magnitude. 

Some computations of magnitude homology is studied for graphs by Hepworth-Willerton(\cite{HW}), for convex subsets in $\mathbb{R}^{n}$ by Leinster-Shulman (\cite{LS}), for the geodesic circle by Kaneta-Yoshinaga (\cite{YK}), and for geodesic spheres by Gomi (\cite{G}). 

Since the motivation and the formulation of magnitude homology are algebraic, several authors study it from a view point of algebra and category theory. On the other hand, its geometric meaning is gradually getting clarified in the study of Leinster-Shulman, Kaneta-Yoshinaga, and Gomi (\cite{LS}, \cite{YK}, \cite{G}).

In this article, we study magnitude homology from a view point of metric geometry, and clarify that the curvature of metric spaces effects heavily on the triviality of magnitude homology. We also investigate its connection with closed geodesics because magnitude homology is defined as Hochschild cohomology, which is well known to be deeply related to free loop spaces.

Let $(X, d)$ be a metric space. A quadruple $(x_{0}, x_{1}, x_{2}, x_{3}) \in X^{4}$ is called {\it 4-cut} if (1) $x_{i} \not= x_{i+1}$ for $0 \leq i \leq 2$, (2) $d(x_{i}, x_{i+2}) = \sum_{j = i}^{i+1}d(x_{j}, x_{j+1})$ for $0 \leq i \leq 1$, and (3) $d(x_{0}, x_{3}) < \sum_{j = 0}^{2}d(x_{j}, x_{j+1})$ are satisfied. Let $m_{X}$ be the infimum of the length $\sum_{j = 0}^{2}d(x_{j}, x_{j+1})$ of 4-cuts $(x_{0}, x_{1}, x_{2}, x_{3})$ in $X$. This invariant is first introduced in \cite{YK} and is very important for the study of magnitude homology. We clarify its metric geometrical interpretation by the following theorem stated as Theorem \ref{ineq} in this artcle. The invariant $l_{X}$ measures the infimum length of locally geodesic path which is not a geodesic. See Definition \ref{lx} for the detail. Let  $D_{\kappa}$ be $\pi/\sqrt{\kappa}$ for $\kappa > 0$ or $+\infty$ for $\kappa \leq $0.
\begin{intt}
For a geodesic ${\rm CAT}(\kappa)$ space $(X, d)$, we have 
\[
D_{\kappa} \leq m_{X} \leq l_{X}.
\]
\end{intt}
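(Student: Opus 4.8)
I would read a $4$-cut as exactly the bookkeeping data of a \emph{non-minimizing local geodesic built out of three consecutive geodesic segments}, so that both inequalities turn into statements about local geodesics in a ${\rm CAT}(\kappa)$ space. Given a $4$-cut $(x_0,x_1,x_2,x_3)$, write $d_i=d(x_i,x_{i+1})$, choose geodesics $[x_i,x_{i+1}]$, and let $c\colon[0,d_0+d_1+d_2]\to X$ be their arc-length concatenation. Hypothesis (1) makes the three segments non-degenerate; hypothesis (2) with $i=0$ and with $i=1$ says $[x_0,x_1]\ast[x_1,x_2]$ and $[x_1,x_2]\ast[x_2,x_3]$ are shortest paths, hence geodesics, so $c$ is a local geodesic; hypothesis (3) says $c$ is not a geodesic. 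Since $c$ has length $d_0+d_1+d_2$, proving $m_X\ge D_\kappa$ reduces to showing that a non-minimizing local geodesic in a ${\rm CAT}(\kappa)$ space has length at least $D_\kappa$, and proving $m_X\le l_X$ reduces to extracting from any non-minimizing local geodesic a $4$-cut no longer than it.

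\textbf{The inequality $m_X\le l_X$.} I would fix a local geodesic $\gamma\colon[0,L]\to X$ that is not a geodesic, with $L$ as close to $l_X$ as desired. Covering $[0,L]$ by the open intervals on which $\gamma$ restricts to a geodesic and taking a Lebesgue number yields $\delta>0$ with $\gamma$ a geodesic on every subinterval of length $<\delta$; then for a partition $0=t_0<\dots<t_k=L$ of mesh $<\delta/2$, each $\gamma|_{[t_{i-1},t_{i+1}]}$ is a geodesic. Since $t\mapsto d(\gamma(0),\gamma(t))$ is continuous, equals $t$ near $0$, and is $<L$ at $t=L$, the index $j:=\max\{\,i:\gamma|_{[0,t_i]}\text{ is a geodesic}\,\}$ satisfies $2\le j<k$ ($j\ge2$ since $t_2<\delta$, $j<k$ since $\gamma$ itself is not a geodesic), and $\gamma|_{[0,t_{j+1}]}$ is not a geodesic. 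Then $(\gamma(0),\gamma(t_{j-1}),\gamma(t_j),\gamma(t_{j+1}))$ is a $4$-cut: (1) and (2) follow from $\gamma|_{[0,t_j]}$ and $\gamma|_{[t_{j-1},t_{j+1}]}$ being geodesics, (3) from $\gamma|_{[0,t_{j+1}]}$ not being one, and its length is $t_{j+1}\le L$. Hence $m_X\le L$, and letting $L\to l_X$ gives $m_X\le l_X$.

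\textbf{The inequality $D_\kappa\le m_X$.} Let $c$ be the non-minimizing local geodesic attached to a $4$-cut, of length $\ell=d_0+d_1+d_2$; the goal is $\ell\ge D_\kappa$. If $\kappa\le0$ then $D_\kappa=+\infty$ and, since every local geodesic in a ${\rm CAT}(\kappa)$ space with $\kappa\le0$ is a geodesic, no such $c$ can exist; so $X$ has no $4$-cut and $m_X=+\infty$. If $\kappa>0$, I would argue by contradiction, assuming $\ell<D_\kappa$. Let $t_\ast$ be the largest $t$ with $c|_{[0,t]}$ a geodesic, equivalently $d(c(0),c(t))=t$; then $t_\ast\in(0,\ell)$, and picking $a>0$ small enough that $c|_{[t_\ast-a,\,t_\ast+a]}$ is a geodesic with $0<t_\ast-a$ and $t_\ast+a\le\ell$, we have $d(c(0),c(t_\ast-a))=t_\ast-a$, $d(c(0),c(t_\ast+a))<t_\ast+a$, and $c(t_\ast)$ sits on the geodesic segment $[c(t_\ast-a),c(t_\ast+a)]$ at distance $a$ from each end. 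The geodesic triangle $\Delta(c(0),c(t_\ast-a),c(t_\ast+a))$ has all sides $<D_\kappa$ and perimeter $<2D_\kappa$, so the ${\rm CAT}(\kappa)$ comparison applies and, comparing $c(0)$ with $c(t_\ast)$, gives $t_\ast=d(c(0),c(t_\ast))\le\bar d$, where $\bar d$ is the distance in $M_\kappa^2$ from the vertex $\overline{c(0)}$ of the comparison triangle to the point of the side $[\overline{c(t_\ast-a)},\overline{c(t_\ast+a)}]$ lying at distance $a$ from $\overline{c(t_\ast-a)}$. But the triangle inequality in $M_\kappa^2$ gives $\bar d\le(t_\ast-a)+a=t_\ast$, with equality only if $\overline{c(0)},\overline{c(t_\ast-a)},\overline{c(t_\ast+a)}$ are collinear, which would force $d(c(0),c(t_\ast+a))=t_\ast+a$, contrary to the above. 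Hence $\bar d<t_\ast$, a contradiction, so $\ell\ge D_\kappa$ and $m_X\ge D_\kappa$. (Alternatively, one may simply cite the standard fact that a local geodesic of length $<D_\kappa$ in a ${\rm CAT}(\kappa)$ space is a geodesic.)

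\textbf{Expected main obstacle.} The only step carrying real content is the ${\rm CAT}(\kappa)$, $\kappa>0$, case above, and inside it the two delicate points are (i) checking that the concatenation produced from a $4$-cut is genuinely a local geodesic in the metric sense --- which is precisely what conditions (1) and (2) guarantee --- and (ii) the strict-versus-non-strict threshold at $D_\kappa$, which is exactly why the conclusion is $m_X\ge D_\kappa$ and cannot be sharpened to a strict inequality (on round spheres $D_\kappa$ is approached by $4$-cuts). The inequality $m_X\le l_X$ and the $\kappa\le0$ case are then formal once this dictionary between $4$-cuts and non-minimizing local geodesics is in place.
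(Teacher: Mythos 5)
Your proposal is correct and follows essentially the same route as the paper: both inequalities are reduced to the dictionary between $4$-cuts and non-minimizing concatenations of geodesics, together with the key fact that a local geodesic of length $< D_{\kappa}$ in a ${\rm CAT}(\kappa)$ space is a geodesic (the paper's Lemma \ref{lem1}, $D_{\kappa} \leq l_{X}$). The only differences are cosmetic — you extract the $4$-cut directly via a Lebesgue-number partition where the paper argues by contradiction, and you prove the key fact by a distance comparison where the paper uses the angle comparison of Proposition \ref{angle}.
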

The following is stated as Corollary \ref{vanish}.
\begin{intc}
Let $(X, d)$ be a geodesic ${\rm CAT}(\kappa)$ space. Then for every $n > 0$ and $0 < l < D_{\kappa}$, the magnitude homology $MH^{l}_{n}(X)$ vanishes.
\end{intc}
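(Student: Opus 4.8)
The plan is to deduce this formally from Theorem~\ref{ineq} together with the behaviour of the magnitude chain complex below the threshold $m_X$. By Theorem~\ref{ineq}, every geodesic ${\rm CAT}(\kappa)$ space satisfies $D_\kappa \le m_X$, so the range $0 < l < D_\kappa$ is contained in $0 < l < m_X$; hence it suffices to prove the curvature-free statement: if $(X,d)$ is a geodesic metric space and $0 < l < m_X$, then $MH^l_n(X) = 0$ for all $n > 0$. This is essentially the content of Kaneta--Yoshinaga \cite{YK}, so one legitimate option is simply to invoke it and substitute $D_\kappa \le m_X$; below I indicate how I would argue the vanishing from scratch.

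Recall that $MH^l_n(X)$ is the homology of the complex $MC^l_n(X)$ freely generated by tuples $(x_0,\dots,x_n)$ with $x_i \neq x_{i+1}$ and $\sum_{i=0}^{n-1} d(x_i,x_{i+1}) = l$, with differential $\partial = \sum_{i=1}^{n-1}(-1)^i\partial_i$, where $\partial_i$ deletes $x_i$ when $d(x_{i-1},x_{i+1}) = d(x_{i-1},x_i) + d(x_i,x_{i+1})$ and is zero otherwise. For $n = 1$ one has $MC^l_0(X) = 0$ because $l > 0$, while $\partial\colon MC^l_2(X) \to MC^l_1(X)$ is onto: given $(x_0,x_1)$ of length $l$, pick an interior point $p$ of a geodesic from $x_0$ to $x_1$, so that $\partial(x_0,p,x_1) = -(x_0,x_1)$; here only geodesicity of $X$, not the curvature bound, is used, giving $MH^l_1(X)=0$ for any geodesic $X$ and any $l>0$. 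For $n \ge 2$ the plan is to build a contracting homotopy $h\colon MC^l_\bullet(X) \to MC^l_{\bullet+1}(X)$ by inserting an interior point of a geodesic joining $x_0$ to $x_1$ into $(x_0,\dots,x_n)$. The role of the hypothesis $l < m_X$ is that there are no $4$-cuts of length $\le l$, so whenever two consecutive triples $(x_{i-1},x_i,x_{i+1})$ and $(x_i,x_{i+1},x_{i+2})$ of a length-$l$ tuple are each geodesic, conditions (1)--(2) in the definition of a $4$-cut hold for $(x_{i-1},x_i,x_{i+1},x_{i+2})$, and since its length is $\le l < m_X$ condition (3) must fail, i.e.\ the whole four-term subtuple is geodesic. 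This collapse is exactly what is needed for $\partial h + h\partial$ to reduce to $\pm\,\mathrm{id}$ after the terms in which the inserted or the original point is deleted are matched up.

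The main obstacle is the non-uniqueness of geodesics in a general ${\rm CAT}(\kappa)$ (or merely geodesic) space: the inserted point is a choice, so $h$ is not canonical, and the term of $\partial h$ in which the original point $x_1$ is removed produces a tuple built from a \emph{different} geodesic than the one $h$ uses on $\partial_1$ of the tuple. Reconciling these discrepancies — by organising the homotopy so that the difference is itself a boundary, by an induction on $n$ that absorbs it, or by passing to the order-complex model of \cite{YK} in which all such choices are packaged uniformly — is the technical heart, and the $4$-cut bound (equivalently $m_X$) is what kills the residual degeneracies that appear along the way. Granting the general vanishing statement, the Corollary is immediate: for $0 < l < D_\kappa$ we have $l < m_X$ by Theorem~\ref{ineq}, hence $MH^l_n(X)=0$ for all $n>0$.
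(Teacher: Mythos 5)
Your first step --- using Theorem~\ref{ineq} to get $D_\kappa\le m_X$ and hence reducing to the range $0<l<m_X$ --- is exactly the paper's first step. But the statement you then reduce to, namely that \emph{any} geodesic metric space has $MH^l_n(X)=0$ for $n>0$ and $0<l<m_X$, is not ``the content of Kaneta--Yoshinaga,'' and you should not invoke it. What \cite{YK} gives below the threshold $m_X$ (Theorems~\ref{yk1} and~\ref{yk2} in this paper) is only a \emph{decomposition} $MH^l_n(X)\cong\bigoplus_F MH^F_n(X)$ together with a formula for each summand as the homology of a tensor product of reduced chain complexes of order complexes of interval posets $I(x_i,x_{i+1})$. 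These summands do not vanish for a general geodesic space: Proposition~\ref{comp} and Theorem~\ref{clogeo} of this very paper produce nonzero classes in $MH^F_2$ precisely from a disconnected interval poset, and nothing in the definition of $m_X$ (which only constrains $4$-cuts, i.e.\ failures of concatenated geodesic segments to be geodesic) forces $I(x,y)$ to be acyclic for $d(x,y)<m_X$. So the curvature hypothesis must be used a second time, after Theorem~\ref{ineq}, and your proposal discards it at exactly that point.

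This is also where your homotopy sketch breaks down: the ``main obstacle'' you flag (the inserted geodesic midpoint of $(x_0,x_2)$ differing from the one inserted for $(x_0,x_1)$, so that $\partial h+h\partial$ does not close up to $\pm\mathrm{id}$) is not a technicality to be absorbed by bookkeeping; it is the geometric content of the corollary. The paper's actual argument is: for $d(x,y)<D_\kappa$ and $a$ smooth between $x$ and $y$, the geodesic triangle $\Delta_{xay}$ is $\kappa$-small and its comparison triangle in $S_\kappa$ is degenerate, so the ${\rm CAT}(\kappa)$ inequality forces $a$ to lie on the (unique) geodesic $\overline{xy}$; hence $I(x,y)$ is totally ordered, its order complex is contractible, each tensor factor in Theorem~\ref{yk2} is acyclic, and every $MH^F_n(X)$ with $|F|<D_\kappa$ vanishes. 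You need to supply this step (or some substitute establishing acyclicity of the interval posets below $D_\kappa$); as written, the proof has a genuine gap, and the intermediate ``curvature-free'' statement you rely on is unproved and very likely false in the stated generality.
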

Since the magnitude homology is defined as a generalization of Hochschild homology of associative algebras in \cite{LS}, it is interesting to investigate some relation with the existence of closed geodesics. The following is stated as Theorem \ref{clogeo}. See Definition \ref{closed} for the definition of closed geodesics.
\begin{intt}
Let $X$ be a metric space. If there exists a closed geodesic of radius $r$ in $X$, then we have $MH^{\pi r}_{2}(X) \not = 0$.
\end{intt}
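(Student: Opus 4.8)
The plan is to produce an explicit class in $MH^{\pi r}_{2}(X)$ built from the closed geodesic, and to detect its non-triviality by a short analysis of which length-$\pi r$ $3$-chains could bound it. Let $\gamma$ be the closed geodesic of radius $r$ given by Definition \ref{closed}; I use its defining properties in the form that $\gamma$ is injective and that every subarc of $\gamma$ of length at most $\pi r$ is a geodesic of $X$. Put $a=\gamma(0)$, $b=\gamma(\pi r)$, $p=\gamma(\pi r/2)$, $q=\gamma(3\pi r/2)$; then $d(a,b)=\pi r$ and $d(a,p)=d(p,b)=d(a,q)=d(q,b)=\pi r/2$, so $d(a,p)+d(p,b)=\pi r=d(a,b)$ and likewise for $q$. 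Hence $(a,p,b)$ and $(a,q,b)$ are generators of $MC^{\pi r}_{2}(X)$; since $p,q$ lie between $a$ and $b$, the single interior face of each is $(a,b)$, so
\[
z \;=\; (a,p,b)-(a,q,b)
\]
is a cycle. As the magnitude differential never alters the two outermost entries of a tuple, $MH^{\pi r}_{2}(X)=\bigoplus_{(u,v)\in X^{2}}MH^{\pi r}_{2}(X)_{(u,v)}$, and $z$ lies in the summand indexed by $(a,b)$; it suffices to show $[z]\neq 0$ there.

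The next step is to identify that summand. Because $\pi r=d(a,b)$, the triangle inequality forces every length-$\pi r$ tuple $(a,x_{1},\dots,x_{n-1},b)$ to have interior points strictly increasing in $d(a,\cdot)$ and satisfying $d(x_{i},x_{j})=|d(a,x_{i})-d(a,x_{j})|$ for all $i,j$; in particular all betweenness conditions are automatic. So, writing $M$ for the set of points strictly between $a$ and $b$, the complex $MC^{\pi r}_{\bullet}(X)_{(a,b)}$ in degrees $1,2,3$ is
\[
\mathbb{Z}\langle(a,b)\rangle\;\xleftarrow{\ \partial\ }\;\mathbb{Z}[M]\;\xleftarrow{\ \partial\ }\;\mathbb{Z}[E],\qquad E=\bigl\{\{x,y\}\subset M: d(x,y)=|d(a,x)-d(a,y)|\bigr\},
\]
with $\partial(a,x,b)=-(a,b)$ and $\partial(a,x,y,b)=(a,x,b)-(a,y,b)$. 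Consequently $MH^{\pi r}_{2}(X)_{(a,b)}\cong\widetilde H_{0}(\Gamma)$ for the graph $\Gamma=(M,E)$, and under this isomorphism $[z]$ is carried to the class $[p]-[q]$. Thus the theorem reduces to the claim that $p$ and $q$ lie in distinct connected components of $\Gamma$.

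This last claim is the heart of the proof and, I expect, the main obstacle. When Definition \ref{closed} supplies enough rigidity---for instance uniqueness of geodesics between points of $\gamma$ at distance $<\pi r$---the argument is quick: an edge of $\Gamma$ at a vertex $\gamma(t)$ with $0<t<\pi r$ is realised by a geodesic from $a$ to $b$ through $\gamma(t)$, which must then be the half $\gamma_{+}=\gamma|_{[0,\pi r]}$ itself, so every $\Gamma$-edge at such a vertex again lands on the interior of $\gamma_{+}$; inductively the $\Gamma$-component of $p$ stays inside the interior of $\gamma_{+}$ and that of $q$ inside the interior of $\gamma_{-}=\gamma|_{[\pi r,2\pi r]}$, and these open arcs are disjoint because $\gamma$ is injective, so $[p]\neq[q]$ and $MH^{\pi r}_{2}(X)\neq 0$. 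If Definition \ref{closed} only asks for a local isometric loop this shortcut is unavailable, and the real work is to forbid directly any chain $p=z_{0},\dots,z_{k}=q$ in $M$ with $d(z_{i},z_{i+1})=|d(a,z_{i})-d(a,z_{i+1})|$: one exploits that $p$ and $q$ are antipodal on $\gamma$ together with $d(a,p)=d(p,b)=d(a,q)=d(q,b)=\pi r/2$, so that the values $t_{i}=d(a,z_{i})\in(0,\pi r)$ are pinned at the endpoints and, via $d(p,q)\le\sum_{i}|t_{i}-t_{i+1}|$, are forced to oscillate across the whole of $(0,\pi r)$, after which a metric rigidity argument along $\gamma$ yields a contradiction. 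Carrying out that rigidity argument from the precise hypotheses of Definition \ref{closed} is where I expect the effort to concentrate.
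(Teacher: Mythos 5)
Your reduction is sound and is, in substance, the route the paper itself takes: the decomposition of $MC^{\pi r}_{\bullet}(X)$ by endpoints, the observation that a chain of length exactly $d(a,b)$ has all betweenness conditions automatic, and the resulting identification $MH^{\pi r}_{2}(X)_{(a,b)}\cong\widetilde H_{0}(\Gamma)$ with $\Gamma$ the comparability graph of the interval poset $I(a,b)$, are precisely the special case of Theorem \ref{yk1}(3) and Theorem \ref{yk2} that the paper invokes. The problem is that you stop exactly where the content of the theorem begins: you do not prove $[p]\neq[q]$ in $\widetilde H_{0}(\Gamma)$, and neither of the routes you sketch is available. Definition \ref{closed} makes $\rho\colon C_{r}\to X$ an isometric embedding of the geodesic circle and nothing more; it grants no uniqueness of geodesics in $X$, so your ``quick argument'' does not apply, and your fallback is only a plan. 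What the paper actually proves at this point (Proposition \ref{comp}) is the statement you should establish first: for $x,y$ in the two opposite \emph{open} semicircles, $\rho(x)$ and $\rho(y)$ are incomparable. This takes two lines from the isometry property alone: if $\rho(x)\le\rho(y)$ then $d(\rho(x),\rho(1))=d(\rho(y),\rho(1))+d(\rho(x),\rho(y))$, while $d(\rho(x),\rho(y))$, being the circle distance between points on opposite semicircles, equals either $d(\rho(x),\rho(0))+d(\rho(0),\rho(y))$ or $d(\rho(x),\rho(1))+d(\rho(1),\rho(y))$; either alternative collapses to $x=0$ or $y=1$.

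Be aware, however, that even this does not close your argument, because --- as you correctly suspect --- $M=I(a,b)$ may contain points of $X$ off the image of $\gamma$, and incomparability of circle points only rules out \emph{direct} edges of $\Gamma$ between the two arcs, not paths through such extra points. This is a genuine obstruction, not a technicality: adjoin to $C_{r}$ a single point $w$ with $d(w,a)=\epsilon$, $d(w,b)=\pi r-\epsilon$, $d(w,\gamma(\pi r-\epsilon))=d(w,\gamma(\pi r+\epsilon))=\pi r-2\epsilon$, extended to the rest of $C_{r}$ by $d(w,z)=\min_{u}\bigl(d(w,u)+d(u,z)\bigr)$ over these four reference points. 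One checks this is a metric and that $\gamma$ remains a closed geodesic, yet $w\in I(a,b)$ satisfies $a\prec w\prec\gamma(\pi r-\epsilon)$ and $a\prec w\prec\gamma(\pi r+\epsilon)$, so $w$ bridges the two semicircular chains and $\Gamma$ is connected; your cycle $z$ then bounds. So the claim you reduce to is false for the fixed pair $(\gamma(0),\gamma(\pi r))$ in a general metric space; a complete proof must either choose the antipodal pair so that no bridging point exists, or argue over all antipodal pairs at once. (The paper's Proposition \ref{comp} asserts disconnectedness of the full interval poset while only verifying incomparability of points on the circle, so it is open to the same objection; your instinct that this is where the real work lies is correct, but the work is not done in your proposal.)
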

We also study a metric space with bouded curvature which is not ${\rm CAT}(\kappa)$. The {\it injectiviti radius} $\iota_{X}$ is the supremum of $r$ such that any pair of points distance $<r$ apart can be joined by the unique geodesic.  The following is stated as Proposition \ref{mi}.
\begin{intt}
Let $(X ,d)$ be a proper geodesic metric space of curvature $\leq \kappa$. If the injectivity radius $\iota_{X}$ is no greater than $D_{\kappa}$, then we have  
\[
 \iota_{X} \leq m_{X}.
\]
\end{intt}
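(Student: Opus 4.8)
The plan is to argue by contradiction. Since $m_{X}=+\infty$ if $X$ has no $4$-cut, we may assume a $4$-cut $(x_{0},x_{1},x_{2},x_{3})$ exists, and it suffices to show that its length $\ell:=\sum_{j=0}^{2}d(x_{j},x_{j+1})$ satisfies $\ell\ge\iota_{X}$; assume instead $\ell<\iota_{X}$. By condition (2), $x_{1}$ lies on a geodesic from $x_{0}$ to $x_{2}$ and $x_{2}$ lies on a geodesic from $x_{1}$ to $x_{3}$, so the concatenation $\gamma$ of geodesic segments $[x_{0},x_{1}]$, $[x_{1},x_{2}]$, $[x_{2},x_{3}]$, parametrised by arclength on $[0,\ell]$, is a \emph{local geodesic}: it is a geodesic near every interior point of each segment, near $x_{1}$ (where it coincides with $[x_{0},x_{2}]$), and near $x_{2}$ (where it coincides with $[x_{1},x_{3}]$). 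By (1) it is nonconstant, and by (3) it is \emph{not} a minimising geodesic, since $\ell>d(x_{0},x_{3})=d(\gamma(0),\gamma(\ell))$. Hence the proposition follows once we establish: in a proper geodesic space of curvature $\le\kappa$ with $\iota_{X}\le D_{\kappa}$, every local geodesic of length $<\iota_{X}$ is a minimising geodesic.

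To prove this I would pass to the universal cover. Being proper, $X$ is complete; being of curvature $\le\kappa$, it is locally ${\rm CAT}(\kappa)$, hence locally contractible, so it admits a universal cover $p\colon\widetilde X\to X$, and with the induced length metric $\widetilde X$ is a proper geodesic space of curvature $\le\kappa$ with $X=\widetilde X/\Gamma$, $\Gamma=\pi_{1}(X)$ acting freely and properly discontinuously by isometries. By the Cartan--Hadamard theorem --- for $\kappa\le0$ in its classical form, for $\kappa>0$ in the form valid under a positive curvature bound --- $\widetilde X$ is a ${\rm CAT}(\kappa)$ space. Now let $\gamma\colon[0,\ell]\to X$ be a local geodesic with $\ell<\iota_{X}\le D_{\kappa}$ and lift it to a local geodesic $\widetilde\gamma\colon[0,\ell]\to\widetilde X$ of the same length. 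In a ${\rm CAT}(\kappa)$ space a local geodesic of length $<D_{\kappa}$ is minimising (the set of $t$ for which $\widetilde\gamma|_{[0,t]}$ is minimising contains $0$, is closed, and is open, because a hinge of Alexandrov angle $\pi$ whose two sides have total length $<D_{\kappa}$ is degenerate, i.e.\ the sides concatenate to a geodesic); hence $d_{\widetilde X}(\widetilde\gamma(0),\widetilde\gamma(\ell))=\ell$. Finally $d_{X}(\gamma(0),\gamma(\ell))=\min_{g\in\Gamma}d_{\widetilde X}(\widetilde\gamma(0),g\,\widetilde\gamma(\ell))$, which equals $\ell$ for $g=e$; while for $g\ne e$ one has $d_{\widetilde X}(\widetilde\gamma(0),g\,\widetilde\gamma(0))\ge2\iota_{X}$ --- indeed, choosing $g$ of minimal displacement and cutting the geodesic from $\widetilde\gamma(0)$ to $g\,\widetilde\gamma(0)$ at its midpoint $m$, the two arcs project to two minimising geodesics in $X$ from $p(\widetilde\gamma(0))$ to $p(m)$ which are distinct (else $g$ would fix $m$), so their common length $\frac12 d_{\widetilde X}(\widetilde\gamma(0),g\,\widetilde\gamma(0))$ is $\ge\iota_{X}$ by the definition of $\iota_{X}$ --- whence $d_{\widetilde X}(\widetilde\gamma(0),g\,\widetilde\gamma(\ell))\ge2\iota_{X}-\ell>\iota_{X}>\ell$. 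Therefore $d_{X}(\gamma(0),\gamma(\ell))=\ell$, so $\gamma$ is minimising, contradicting (3), and the proof is complete.

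The main obstacle is the globalisation step producing the ${\rm CAT}(\kappa)$ structure on $\widetilde X$: for $\kappa\le0$ this is the classical Cartan--Hadamard theorem and $D_{\kappa}=\infty$ makes the hypothesis $\iota_{X}\le D_{\kappa}$ vacuous, but for $\kappa>0$ one needs its more delicate positive-curvature counterpart, and it is exactly there --- in guaranteeing that the lifted local geodesic, having length $<\iota_{X}\le D_{\kappa}$, is forced to be minimising in $\widetilde X$ --- that the hypothesis $\iota_{X}\le D_{\kappa}$ does real work. The remaining ingredients are routine: existence of the universal cover as a proper geodesic space with the stated $\pi_{1}$-action, the standard behaviour of short local geodesics in ${\rm CAT}(\kappa)$ spaces, and the identification of $2\iota_{X}$ as a lower bound for the displacement of nontrivial deck transformations. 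One could try to avoid covering spaces by first proving that balls of radius $<\iota_{X}/2$ in $X$ are convex ${\rm CAT}(\kappa)$ subspaces and then extending the local-geodesic statement along $\gamma$ from length $<\iota_{X}/2$ to length $<\iota_{X}$; but gluing two consecutive minimising sub-arcs again reduces to the degeneracy of a hinge of angle $\pi$ across a region of diameter comparable to $\iota_{X}$, which no ball of radius $<\iota_{X}/2$ contains, so the genuinely global input cannot be sidestepped.
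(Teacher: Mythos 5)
Your reduction is correct and is in fact a little cleaner than the paper's: since $x_1$ is smooth between $x_0$ and $x_2$ and $x_2$ is smooth between $x_1$ and $x_3$, the arclength concatenation of $[x_0,x_1]$, $[x_1,x_2]$, $[x_2,x_3]$ is automatically a local geodesic of length $|x|$ that is not minimizing, so everything hinges on the claim that in such a space every local geodesic of length $<\iota_X$ is minimizing. This claim is exactly the paper's Proposition \ref{linj} ($\iota_X\le l_X$), which the paper combines with a gluing argument; so far the two routes agree in substance.

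The gap is in your proof of that claim. For $\kappa>0$ the Cartan--Hadamard theorem does \emph{not} globalize: a complete, simply connected, proper geodesic space of curvature $\le\kappa$ need not be ${\rm CAT}(\kappa)$. A standard counterexample is two round spheres of curvature $\kappa$ joined by a thin, nearly flat tube: it is simply connected (hence its own universal cover), has curvature $\le\kappa$, satisfies $\iota_X\le D_\kappa$ (the neck carries a short closed geodesic), yet is not ${\rm CAT}(\kappa)$ -- indeed Proposition \ref{gromov} says precisely that the presence of a closed geodesic of length $<2D_\kappa$ obstructs ${\rm CAT}(\kappa)$. The hypothesis $\iota_X\le D_\kappa$ is an \emph{upper} bound on the injectivity radius, so it cannot supply the missing globalization input; and for $\kappa\le0$, where your argument does work, that hypothesis is vacuous and the statement is not the interesting one. (A smaller issue: your lower bound $d(\tilde\gamma(0),g\tilde\gamma(0))\ge2\iota_X$ assumes the two halves of the geodesic from $\tilde\gamma(0)$ to $g\tilde\gamma(0)$ project to \emph{minimizing} geodesics in $X$, which needs an extra shortening argument; but this is repairable.) The paper circumvents the globalization problem entirely by staying in $X$: it uses Proposition \ref{smoothin} (Bridson--Haefliger II.4.11), which gives the comparison inequality for a single $\kappa$-small triangle in a space that is only \emph{locally} ${\rm CAT}(\kappa)$, provided the geodesics from one vertex to the opposite side vary continuously; the continuity is extracted via Arzel\`a--Ascoli using $|\gamma|<\iota_X$, and Alexandrov's lemma (Proposition \ref{alex}) then forces the hinge of angle $\pi$ to degenerate. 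To fix your proof you would need to replace the Cartan--Hadamard step by this local comparison argument (or by an equally delicate positive-curvature globalization theorem whose hypotheses you would then have to verify), which is essentially to reprove Proposition \ref{linj} the paper's way.
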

The {\it systole} ${\rm Sys}(X)$ is the infimum of length of closed geodesics in $X$. The following is stated as Corollary \ref{locat}.
\begin{intc}
Let $X$ be a cocompact proper geodesic metric space of curvature $\leq \kappa$ which is not {\rm CAT}$(\kappa)$, or the standard sphere. Then for every $n > 0$ and every $0 < l < \iota_{X} = {\rm Sys}(X)/2$, the magnitude homology $MH^{l}_{n}(X)$ vanishes.
\end{intc}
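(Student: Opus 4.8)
The plan is to derive the statement from Proposition \ref{mi} together with the vanishing of magnitude homology in the range $l < m_X$ that already underlies Corollary \ref{vanish}. Thus everything reduces to the single geometric fact that, for $X$ as in the hypotheses, $\iota_X = {\rm Sys}(X)/2$ and $\iota_X \leq D_\kappa$. Indeed, granting this, $X$ is in particular a proper geodesic space of curvature $\leq\kappa$ with $\iota_X \leq D_\kappa$, so Proposition \ref{mi} gives $\iota_X \leq m_X$; hence every $l$ with $0 < l < \iota_X = {\rm Sys}(X)/2$ satisfies $0 < l < m_X$, and then $MH^l_n(X) = 0$ for all $n > 0$ by the general vanishing of \cite{YK} valid in the range $0 < l < m_X$ (the same input used in the proof of Corollary \ref{vanish}). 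This also recovers the description of the range as $0 < l < {\rm Sys}(X)/2$.

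To prove the geometric fact, suppose first that $X$ is cocompact, proper, geodesic, of curvature $\leq\kappa$, and \emph{not} ${\rm CAT}(\kappa)$. By a Cartan--Hadamard-type theorem, a complete geodesic space of curvature $\leq\kappa$ containing no closed geodesic of length $< 2D_\kappa$ is ${\rm CAT}(\kappa)$; hence our $X$ does contain a closed geodesic of length $< 2D_\kappa$, and cocompactness and properness ensure that the infimum ${\rm Sys}(X)$ is attained by some closed geodesic (by Arzel\`{a}--Ascoli; the local ${\rm CAT}(\kappa)$ condition and cocompactness bound closed geodesics uniformly away from length $0$, so the limit is non-constant). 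Thus $0 < {\rm Sys}(X) < 2D_\kappa$. Next, the Klingenberg-type relation between injectivity radius and systole in a space of curvature $\leq\kappa$ gives $\iota_X = \min\{D_\kappa,\, {\rm Sys}(X)/2\}$: one inequality comes from the two halves of a shortest closed geodesic, which are distinct geodesics joining its basepoint to its midpoint at distance ${\rm Sys}(X)/2$; the other from the fact that two distinct geodesics joining points at distance $r < \min\{D_\kappa,\, {\rm Sys}(X)/2\}$ form a geodesic bigon which, by the curvature bound, shortens to a closed geodesic of length $\leq 2r$, a contradiction. Combining, $\iota_X = {\rm Sys}(X)/2 < D_\kappa$. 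For the remaining case $X = \mathbb{S}^n$ with the standard metric ($\kappa = 1$, $D_\kappa = \pi$), inspection of great circles gives ${\rm Sys}(\mathbb{S}^n) = 2\pi$ and $\iota_{\mathbb{S}^n} = \pi$, so the fact holds with $\iota_X = D_\kappa$; since moreover $\mathbb{S}^n$ is ${\rm CAT}(1)$, the desired vanishing is here already a special case of Corollary \ref{vanish}, and the present statement only adds the identity $\iota_X = {\rm Sys}(\mathbb{S}^n)/2$.

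Everything after the geometric fact is formal, so I expect the main obstacle to be that fact in the non-${\rm CAT}(\kappa)$ case: pinning down that failure of the ${\rm CAT}(\kappa)$ inequality forces a closed geodesic of length $< 2D_\kappa$ that is realized by the systole --- which is exactly where cocompactness and properness are used --- and carrying out the Klingenberg-type shortening argument that yields $\iota_X = {\rm Sys}(X)/2$ in the setting of curvature $\leq\kappa$ (rather than the Riemannian setting where it is classical).
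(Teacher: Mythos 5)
There is a genuine gap in the homological half of your argument. You reduce everything to the claim that $0 < l < m_X$ by itself forces $MH^l_n(X) = 0$ for $n > 0$, calling this ``the general vanishing of \cite{YK}.'' No such general vanishing exists, and it is not what underlies Corollary \ref{vanish}. What Theorem \ref{yk1} gives in the range $0 < l < m_X$ is only the decomposition $MH^{l}_{n}(X) \cong \bigoplus_{F} MH^{F}_{n}(X)$ over frames $F$ of length $l$; the summands $MH^{F}_{n}(X)$ are computed by Theorem \ref{yk2} from order complexes of interval posets and need not vanish. A concrete counterexample to your claimed implication: for the $4$-cycle graph one has $m_X = 3$, yet $I(x,y)$ for an antipodal pair consists of two incomparable points, so $MH^{2}_{2} \neq 0$ even though $2 < m_X$. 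The proof of Corollary \ref{vanish} therefore contains a second, genuinely geometric step --- showing that every interval poset $I(x,y)$ with $d(x,y) < D_\kappa$ is totally ordered --- and that step uses the global ${\rm CAT}(\kappa)$ inequality, which is exactly what is unavailable in the present setting. To close the gap you must show that in a proper geodesic space of curvature $\leq \kappa$ with $\iota_X \leq D_\kappa$, any smooth point between $x$ and $y$ with $d(x,y) < \iota_X$ lies on the unique geodesic $[x,y]$; this is the content of the argument inside Proposition \ref{mi} (continuously varying geodesics, obtained via Arzel\`a--Ascoli, fed into Proposition \ref{smoothin} and Alexandrov's lemma), and it is precisely what the paper means by ``the similar argument in the proof of Corollary \ref{vanish} and Proposition \ref{mi}.''

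The rest of your proposal is structurally the same as the paper's: the sphere is handled by Corollary \ref{vanish}, and the non-${\rm CAT}(\kappa)$ case rests on $\iota_X = {\rm Sys}(X)/2 < D_\kappa$ plus Proposition \ref{mi}. Where you differ is that you re-derive the identity $\iota_X = {\rm Sys}(X)/2 < D_\kappa$ from scratch (Cartan--Hadamard, attainment of the systole, a Klingenberg-type bigon-shortening argument), whereas the paper simply cites Proposition \ref{gromov} (\cite{BH} II.4.16). Your sketch of that part is plausible but not needed; the real missing content is the total orderedness of the interval posets described above.
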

This article is organized as follows. In section 2, we briefly recall some notation and fundamental facts on metric geometry. In section 3, we briefly recall the definition of magnitude homology and some facts on them studied in \cite{YK}. In section 4, we study the magnitude homology of ${\rm CAT}(\kappa)$ spaces through the invariants $m_{X}$ and $l_{X}$. We also compute the magnitude homology of some simply connected complete Riemannian manifolds as examples. In section 5, we study how closed geodesic in a metric space effects on the magnitude homology. This is motivated by the fact magnitude homology is defined as a generalization of Hochschild homology in \cite{LS}. In section 6, we study the magnitude homology of non-${\rm CAT}(\kappa)$ metric spaces whose curvature is bouded from above through the invariants $\iota_{X}$ and ${\rm Sys}(X)$. We also compute the magnitude homology of real projetive spaces $\mathbb{RP}^{n}$ with standard metric as an example.
\subsection*{Acknowledgement}The author would like to thank his supervisor Prof. Toshitake Kohno for fruitful comments and enormous support. This work is supported by the Program for Leading Graduate Schools, MEXT, Japan. This work is also supported by JSPS KAKENHI Grant Number 17J01956.
\section{Preliminary on metric geometry}
In this section, we briefly recall some notations on metric geometry from \cite{BH}.
\begin{df}
Let $(X, d)$ be a meric space, and $c$ be a positive number. A path $\gamma \colon [0, c] \to X$ is a {\it linearly reparametrized geodesic connecting} $\gamma(0)$ {\it and} $\gamma(1)$ if it satisfies $d(\gamma(t), \gamma(s)) = \lambda |t - s|$ for every $0\leq s \leq t \leq 1$ and for some $\lambda \geq 0$. When $\lambda = 1$ we call such a path {\it geodesic}. 
\end{df}
\begin{df}
A metric space $X$ is {\it geodesic} if for each pair of points $a, b \in X$, there exist a geodesic connecting them. Furthermore, if such a geodesic is unique, then $X$ is called {\it uniquely geodesic}.
\end{df}
We sometimes denote a geodesic from some interval connecting $a$ and $b$ in a metric space $X$, by $[a, b]$.
\begin{df}
Let $a, b, c$ be points in a metric space $X$, and let $[a, b], [b, c], [c, a]$ be geodesics from some intervals connecting each pair of points. The union of these geodesic images $\Delta_{abc} := [a, b]\cup [b, c]\cup [c, a]$ is called a {\it geodesic triangle}. 
\end{df}
Let $S_{\kappa}$ be a simply connected surface of constant sectional curvature $\kappa$ for $\kappa \in \mathbb{R}$. Note that for every geodesic triangle $\Delta_{abc}$ in a metric space $X$, there exist a geodesic triangle $\tilde{\Delta}_{abc}$ in $S_{\kappa}$ whose sides have the length precisely equal to the length of corresponding sides of $\Delta_{abc}$. We also note that if the inequality 
\[
d(a, b) + d(b, c) + d(c, a) < 2D_{\kappa}
\]
 is satisfied for
 \[
 D_{\kappa} := \begin{cases} \pi/\sqrt{\kappa} & \kappa > 0 \\ +\infty & \kappa \leq 0\end{cases},
 \]
 then such a triangle is uniquely determined up to congruence. The following is a fundamental fact on elementary geometry known as Alexandrov's lemma.
 \begin{prp}[\cite{BH} Lemma I.2.16]\label{alex}
 Let $A, B, B', C$ be distinct points in $S_{\kappa}$. If $\kappa > 0$, we assume that $d(A, B) + d(B, C) + d(C, B') + d(B', A) < 2D_{\kappa}$ and $d(B, C) + d(C, B') < D_{\kappa}$. We suppose that $B$ and $B'$ lie on opposite sides of the line through $A$ and $C$. Let $\alpha, \beta, \gamma ({\it respectively} \ \alpha', \beta', \gamma')$ be the angles of a triangle $\Delta ({\it resp.} \ \Delta')$ with vertices $A, B, C ({\it resp.} \ A, B', C)$. We assume that $\gamma + \gamma' \geq \pi$. Let $\overline{\Delta}$ be a triangle on $S_{\kappa}$ with vertices $\overline{A}, \overline{B}, \overline{B'}$ such that $d(\overline{A}, \overline{B}) = d(A, B), d(\overline{A}, \overline{B'}) = d(A, B')$ and $d(\overline{B}, \overline{B'}) = d(B, C) + d(C, B')$. Let $\overline{\alpha}, \overline{\beta}, \overline{\beta'}$ be the angles of $\overline{\Delta}$ at $\overline{A}, \overline{B}, \overline{B'}$. Then we have 
 \[
 \overline{\beta} \geq \beta, \ \overline{\beta'} \geq \beta'.
 \]
 \end{prp}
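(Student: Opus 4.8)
The plan is to carry out everything inside the model surface $S_\kappa$ and to deduce both inequalities from a single comparison, exploiting the symmetry of the data under the interchange of $B$ and $B'$ (which swaps $\Delta$ and $\Delta'$, $\beta$ and $\beta'$, $\gamma$ and $\gamma'$, and the two vertices of $\overline\Delta$ other than $\overline A$, while fixing $\overline\Delta$ itself). First I would mark on the geodesic $[\overline B,\overline{B'}]$ the point $\overline C$ with $d(\overline B,\overline C)=d(B,C)$, so that $d(\overline C,\overline{B'})=d(C,B')$. Since $d(B,C)+d(C,B')<D_\kappa$, this geodesic is the unique one of its length and $\overline C$ lies strictly between its endpoints; in particular $\overline C$ lies on the ray from $\overline B$ through $\overline{B'}$, and the angle at $\overline C$ along the segment $[\overline B,\overline{B'}]$ is straight.

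Next I would transplant the triangle $\Delta=\triangle(A,B,C)$ into this figure. Let $\hat A$ be the point of $S_\kappa$ lying on the same side of the line through $\overline B$ and $\overline{B'}$ as $\overline A$, with $d(\hat A,\overline B)=d(A,B)$ and with the angle at $\overline B$ between $[\overline B,\hat A]$ and $[\overline B,\overline C]$ equal to $\beta$. By the side-angle-side rigidity of triangles in $S_\kappa$, the triangle $\triangle(\hat A,\overline B,\overline C)$ is congruent to $\Delta$, so $d(\hat A,\overline C)=d(A,C)$ and the angle of $\triangle(\hat A,\overline B,\overline C)$ at $\overline C$ equals $\gamma$. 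Because $\overline B,\overline C,\overline{B'}$ are collinear with $\overline C$ strictly between, the angle at $\overline C$ between $[\overline C,\hat A]$ and $[\overline C,\overline{B'}]$ is therefore $\pi-\gamma$, and the hypothesis $\gamma+\gamma'\ge\pi$ gives $\pi-\gamma\le\gamma'$.

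Now I would compare $\triangle(\hat A,\overline C,\overline{B'})$ with $\Delta'=\triangle(A,C,B')$. They have two pairs of equal sides, $d(\hat A,\overline C)=d(A,C)$ and $d(\overline C,\overline{B'})=d(C,B')$, and the angle of the first between these two sides is at most the corresponding angle $\gamma'$ of the second; by the monotonicity of the law of cosines in $S_\kappa$ --- the side opposite an angle of a triangle with two prescribed sides is a strictly increasing function of that angle --- we obtain $d(\hat A,\overline{B'})\le d(A,B')=d(\overline A,\overline{B'})$. To finish, observe that $\hat A$ and $\overline A$ both lie on the metric sphere of radius $d(A,B)$ about $\overline B$ and on the same side of the line through $\overline B$ and $\overline{B'}$; along this arc the distance to the fixed point $\overline{B'}$ is, again by the law of cosines, a strictly increasing function of the angle at $\overline B$ measured from the ray towards $\overline{B'}$. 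Hence $d(\hat A,\overline{B'})\le d(\overline A,\overline{B'})$ forces the angle at $\overline B$ in $\triangle(\hat A,\overline B,\overline{B'})$ to be at most the angle at $\overline B$ in $\overline\Delta$; since the former equals $\beta$ (it is the angle between $[\overline B,\hat A]$ and $[\overline B,\overline C]$, and $\overline C$ lies on the ray towards $\overline{B'}$) and the latter is $\overline\beta$, this is exactly $\overline\beta\ge\beta$. The inequality $\overline{\beta'}\ge\beta'$ then follows by running the same argument with $B$ and $B'$ (hence $\Delta$ and $\Delta'$, $\beta$ and $\beta'$, $\gamma$ and $\gamma'$) interchanged.

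I expect the main difficulty to lie not in the geometric idea --- which is just the straightening at $\overline C$ together with the two monotonicity properties of the law of cosines --- but in the bookkeeping needed when $\kappa>0$ to keep every triangle and every transplanted copy inside the range where $S_\kappa$ behaves like a model space: one must verify that all the side lengths involved stay below $D_\kappa$ and all the perimeters below $2D_\kappa$, so that the congruence of triangles, the uniqueness of $\overline C$, and the law-of-cosines monotonicities are all legitimate. This is precisely where the two quantitative hypotheses $d(B,C)+d(C,B')<D_\kappa$ and $d(A,B)+d(B,C)+d(C,B')+d(B',A)<2D_\kappa$ are consumed, the latter together with the triangle inequality bounding the perimeters of $\Delta$ and $\Delta'$.
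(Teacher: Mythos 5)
The paper does not prove this proposition: it is quoted verbatim from Bridson--Haefliger (Lemma I.2.16) as a known fact, so there is no internal proof to compare against. Your argument is correct, and it is in substance the classical ``straightening'' proof of Alexandrov's lemma found in that reference: place $\overline{C}$ on $[\overline{B},\overline{B'}]$, transplant $\Delta$ by side--angle--side so that $\hat{A}$, $\overline{B}$, $\overline{C}$ reproduce it, use $\gamma+\gamma'\geq\pi$ to see that the angle of $\triangle(\hat{A},\overline{C},\overline{B'})$ at $\overline{C}$ is at most $\gamma'$, and then apply the two monotonicity consequences of the law of cosines in $S_{\kappa}$ (third side increasing in the included angle, and conversely) to get $d(\hat{A},\overline{B'})\leq d(A,B')$ and hence $\beta\leq\overline{\beta}$, with $\overline{\beta'}\geq\beta'$ by the $B\leftrightarrow B'$ symmetry. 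The bookkeeping you defer for $\kappa>0$ does go through as you anticipate: the hypothesis $d(B,C)+d(C,B')<D_{\kappa}$ makes $[\overline{B},\overline{B'}]$ the unique geodesic containing $\overline{C}$, and the perimeter bound $<2D_{\kappa}$ forces every side occurring in the comparisons (including $d(A,C)\leq\frac{1}{2}(d(A,B)+d(B,C)+d(C,B')+d(B',A))<D_{\kappa}$) to be strictly less than $D_{\kappa}$, which is exactly what the SAS rigidity and the strict monotonicity of the spherical law of cosines require.
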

\begin{df}
A geodesic triangle $\Delta_{abc}$ in a metric space $X$ is $\kappa$-{\it small} if the inequality $d(a, b) + d(b, c) + d(c, a) < 2D_{\kappa}$ is satisfied.
For a $\kappa$-small geodesic triangle $\Delta_{abc}$, the corresponding triangle $\tilde{\Delta}_{abc}$ in $S_{\kappa}$ is called the {\it comparison triangle in} $S_{\kappa}$. 
\end{df}
\begin{df}
Let $\Delta_{abc} = [a, b]\cup [b, c]\cup [c, a]$ be a geodesic triangle in a metric space $(X, d)$, and let $\tilde{\Delta}_{abc} = [\tilde{a}, \tilde{b}]\cup [\tilde{b}, \tilde{c}]\cup [\tilde{c}, \tilde{a}]$ be its comparison trinangle in $S_{\kappa}$. A point $\tilde{s} \in [\tilde{a}, \tilde{b}]$ is {\it the comparison point} of $s \in \Delta_{abc}$ if $s \in [a, b]$ and $d(\tilde{a}, \tilde{s}) = d(a, s)$ is satisfied. We similary define the comparison points for points on $[\tilde{b}, \tilde{c}]$ and $[\tilde{c}, \tilde{a}]$.
\end{df}
The following notion of ${\rm CAT}(\kappa)$ space plays a fundamental role in this paper.
\begin{df}
A metric space $(X, d)$ is ${\rm CAT}(\kappa)$ if for every $\kappa$-small geodesic triangle $\Delta_{abc}$ in $X$ and for every pair of points $s, t \in \Delta_{abc}$, the ${\rm CAT}(\kappa)$ {\it inequality}
\[
d(s, t) \leq d_{S_{\kappa}}(\tilde{s}, \tilde{t})
\] 
holds for the comparison points $\tilde{s}$ and $\tilde{t}$.
\end{df}
\begin{ex}
Apparently, the $2$-sphere $S_{\kappa}$ of constant sectional curvature $\kappa$, the Euclid plane $S_{0}$, and the hyperbolic plane $S_{-\kappa}$ of sectional curvature $-\kappa < 0$ are ${\rm CAT}(\kappa), {\rm CAT}(0)$, and ${\rm CAT}(-\kappa)$ respectively. 
\end{ex}
More generally, we have the following.
\begin{prp}
A complete simply connected Riemannian manifold with sectional curvature $\leq\kappa$ is ${\rm CAT}(\kappa)$.
\end{prp}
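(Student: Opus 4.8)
Write $M$ for the given manifold. The plan is to prove this in two stages. The first, which carries the analytic content, is the \emph{local} assertion that every point of $M$ has a neighbourhood which is ${\rm CAT}(\kappa)$; this I would obtain from the Rauch (Jacobi field / Hessian) comparison theorem. The second is a \emph{globalisation} of Cartan--Hadamard type, promoting ``locally ${\rm CAT}(\kappa)$'' to ``${\rm CAT}(\kappa)$'' by means of completeness and simple connectedness. For technical details I would refer to \cite{BH}.

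For the local step, write $\rho_p=d(p,\cdot)$, let $s_\kappa$ solve $f''+\kappa f=0$ with $f(0)=0$, $f'(0)=1$, and set $\mu_\kappa(t)=\int_0^t s_\kappa$, so that $\mu_\kappa'=s_\kappa$, $\mu_\kappa''=s_\kappa'=1-\kappa\mu_\kappa$, and $\mu_\kappa$ is strictly increasing on $[0,D_\kappa)$. Because $\sec\le\kappa$, Rauch gives $\|J(t)\|\ge s_\kappa(t)\,\|J'(0)\|$ for normal Jacobi fields $J$ with $J(0)=0$ along geodesics issuing from $p$, for $0\le t<D_\kappa$; hence there are no conjugate points within distance $D_\kappa$, and on a normal ball one has the Hessian comparison $\operatorname{Hess}\rho_p\ge\tfrac{s_\kappa'(\rho_p)}{s_\kappa(\rho_p)}\bigl(g-d\rho_p\otimes d\rho_p\bigr)$, from which a short computation gives $\operatorname{Hess}(\mu_\kappa\circ\rho_p)\ge(1-\kappa\,\mu_\kappa\circ\rho_p)\,g$; equivalently $(\mu_\kappa\circ\rho_p\circ\sigma)''\ge 1-\kappa\,(\mu_\kappa\circ\rho_p\circ\sigma)$ along every geodesic $\sigma$. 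I would then fix a sufficiently small strongly convex ball $U$ — so any two of its points are joined by a unique geodesic, lying in $U$, and the functions $d(x,\cdot)$ are smooth on $U\setminus\{x\}$ — and a $\kappa$-small geodesic triangle $\Delta_{abc}\subset U$. The triangle inequality and the bound (perimeter)$<2D_\kappa$ force each side, and each side of each subtriangle, to have length $<D_\kappa$; so the comparison triangle $\tilde\Delta_{abc}$ in $S_\kappa$ exists and $d(a,\cdot)<D_\kappa$ on $[b,c]$. Parametrising $[b,c]$ and $[\tilde b,\tilde c]$ by arc length, the function $h:=\mu_\kappa(d(a,\cdot))-\mu_\kappa(d_{S_\kappa}(\tilde a,\cdot))$ vanishes at the endpoints and satisfies $h''+\kappa h\ge 0$ (the $S_\kappa$-term fulfilling the analogous identity with equality) on an interval of length $<D_\kappa$; the maximum principle — applicable precisely because that length bound makes the largest Dirichlet eigenvalue of $\tfrac{d^2}{dt^2}+\kappa$ negative — then gives $h\le 0$, i.e.\ $d(a,x)\le d_{S_\kappa}(\tilde a,\tilde x)$ for $x\in[b,c]$, since $\mu_\kappa$ is increasing on $[0,D_\kappa)$. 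The ${\rm CAT}(\kappa)$ inequality for an arbitrary pair $s,t\in\Delta_{abc}$ would then follow from this ``vertex versus opposite side'' case by the standard reduction (two applications of the estimate just established, followed by a comparison of the resulting $S_\kappa$-triangles via Alexandrov's lemma, Proposition \ref{alex}). Thus $U$ is ${\rm CAT}(\kappa)$.

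For the global step, when $\kappa\le 0$ one has $D_\kappa=+\infty$ and the Cartan--Hadamard theorem for metric spaces \cite{BH}, namely that a complete, simply connected, locally ${\rm CAT}(\kappa)$ space is ${\rm CAT}(\kappa)$, finishes the proof; when $\kappa>0$ I would use the form of this theorem adapted to positive $\kappa$, in which the sole obstruction to globalising the local comparison is a closed geodesic of length $<2D_\kappa$ (equivalently a geodesic bigon with distinct sides of total length $<2D_\kappa$), and check that the curvature bound together with simple connectedness rules out such configurations. I expect the main obstacle to lie in the local step — specifically in the bookkeeping that turns the infinitesimal Hessian/Jacobi-field estimate into the finite ${\rm CAT}(\kappa)$ inequality for a general pair of points on a general small triangle, the ``vertex versus opposite side'' estimate being the easy half and the reduction being where Alexandrov's lemma does the work — with a secondary delicate point being the $\kappa>0$ globalisation, where the curvature hypothesis must be invoked to exclude short closed geodesics; a fully self-contained treatment could avoid both difficulties by citing \cite{BH} for the whole statement.
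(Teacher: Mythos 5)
The paper offers no proof of this proposition at all --- it is quoted as a standard fact (Alexandrov's theorem on the local curvature bound together with a Cartan--Hadamard type globalisation, both from \cite{BH}) --- so there is no argument of the author's to compare yours with step by step. Your local step is exactly the standard one: Rauch, the Hessian comparison for $\mu_\kappa\circ\rho_p$, the differential inequality $h''+\kappa h\ge 0$ along the opposite side on an interval of length $<D_\kappa$, and the reduction of the general two-point comparison to the vertex-versus-side case via Proposition \ref{alex}. As a sketch this is sound, and so is the globalisation for $\kappa\le 0$ via the metric Cartan--Hadamard theorem.

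The genuine gap is in your globalisation for $\kappa>0$. You propose to ``check that the curvature bound together with simple connectedness rules out'' closed geodesics of length $<2D_\kappa$; that check cannot succeed, because the implication is false. Join two round $2$-spheres of curvature $\kappa$ by a thin neck of radius $r\ll 1/\sqrt{\kappa}$ whose profile interpolates smoothly; the interpolating region can be arranged to have nonpositive Gaussian curvature, so the resulting dumbbell surface is complete, simply connected and has $\sec\le\kappa$, yet two antipodal points of its waist circle are joined by two distinct geodesics of length $\pi r<D_\kappa$, which is impossible in a ${\rm CAT}(\kappa)$ space (equivalently: the waist is a closed geodesic of length $<2D_\kappa$, so Proposition \ref{gromov}, or Theorem \ref{clogeo} together with Corollary \ref{vanish}, shows the surface is not ${\rm CAT}(\kappa)$). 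So for $\kappa>0$ the globalisation requires a genuinely stronger hypothesis than simple connectedness --- e.g.\ injectivity radius $\ge D_\kappa$, or Bowditch's condition that every loop of length $<2D_\kappa$ be null-homotopic through loops of length $<2D_\kappa$. This is really a defect of the proposition as stated rather than of your local analysis, and it is harmless for the spaces the paper actually applies it to (the model spaces of Example \ref{ex1}, where the extra hypothesis visibly holds), but your plan for the positive-curvature case, as written, fails at precisely that point.
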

Hence for example, we have the following examples of ${\rm CAT}(\kappa)$ spaces. Let $\mathbb{S}^{n}_{\kappa} = \{(x_{0}, \dots, x_{n}) \in \mathbb{R}^{n+1} \mid \sum_{i = 0}^{n}x_{i}^{2} = 1/\kappa\}$ be the $n$-sphere of radius $1/\sqrt{\kappa}$ equipped with the geodesic metric.
\begin{ex}\label{ex1}
Let $n\geq 2$. The $n$-sphere $\mathbb{S}^{n}_{\kappa}$ of radius $1/\sqrt{\kappa}$, the $n$-dimensional Euclid space $\mathbb{E}^{n}$, and the $n$-dimensional hyperbolic space $\mathbb{H}^{n}$ of sectional curvature $-\kappa < 0$ are ${\rm CAT}(\kappa), {\rm CAT}(0)$, and ${\rm CAT}(-\kappa)$ respectively. 
\end{ex}
We denote the circle $ \{(x_{0}, x_{1}) \in \mathbb{R}^{2} \mid x_{0}^{2} + x_{1}^{2} = r^{2}\}$ of radius $r$ equipped with the geodesic metric by $C_{r}$. The following is immediate.
\begin{ex}\label{ex2}
The circle $C_{1/\sqrt{\kappa}}$ is ${\rm CAT}(\kappa)$.
\end{ex}
A connected undirected metric graph with no cycles is called {\it a tree}.
\begin{ex}\label{ex3}
Every tree is ${\rm CAT}(0)$.
\end{ex}
We introduce the following notion of the angle.
\begin{df}
Let $c_{1}$ and $c_{2}$ be geodesics in $X$ with the same start point $C$. We define the {\it angle} between $c_{1}$ and $c_{2}$ at $C$ by 
\[
\angle C := {\rm limsup}_{\epsilon, \epsilon' \to 0}\tilde{\angle} c_{1}(\epsilon)Cc_{2}(\epsilon'),
\]
where $\tilde{\angle} c_{1}(\epsilon)Cc_{2}(\epsilon')$ is the angle of the comparison triangle $\tilde{\Delta}_{c_{1}(\epsilon)Cc_{2}(\epsilon')}$ at $\tilde{C}$ in the Euclid plane $S_{0}$. 
\end{df}
The following are fundamental. See for example \cite{BH} for the proof.
\begin{prp}[\cite{BH} Proposition I.1.14]\label{anglesum}
Let $X$ be a metric space and let $c, c'$ and $c''$ be geodesics in $X$ starting from the same point $p$. Then we have 
\[
\angle(c', c'') \leq \angle(c, c') + \angle(c, c'').
\]
\end{prp}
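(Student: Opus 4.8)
The plan is to argue by contradiction, exploiting the comparison-triangle definition of the Alexandrov angle together with Alexandrov's Lemma (Proposition~\ref{alex}) in the Euclidean model plane $S_{0}$.

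Suppose $\angle(c',c'') > \angle(c,c') + \angle(c,c'')$ and fix $\delta > 0$ with $\angle(c',c'') > \angle(c,c') + \angle(c,c'') + 3\delta$; since every Alexandrov angle is at most $\pi$, this also forces $\angle(c,c') + \angle(c,c'') + 2\delta < \pi$. Unravelling the ${\rm limsup}$ in the definitions of $\angle(c,c')$ and $\angle(c,c'')$, I would first choose $\epsilon_{0} > 0$ so that $\tilde{\angle}\, c'(s')\,p\,c(s) < \angle(c,c') + \delta$ and $\tilde{\angle}\, c''(s'')\,p\,c(s) < \angle(c,c'') + \delta$ for all $s, s', s'' \in (0, \epsilon_{0}]$, and then, from the ${\rm limsup}$ defining $\angle(c',c'')$, fix one pair $s', s'' \in (0, \epsilon_{0}]$ with $\tilde{\angle}\, c'(s')\,p\,c''(s'') > \angle(c',c'') - \delta$. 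Writing $q' := c'(s')$ and $q'' := c''(s'')$, it is then enough to exhibit a single point $q = c(s)$ on $c$ with $\tilde{\angle}\, q'\,p\,q'' \leq \tilde{\angle}\, q'\,p\,q + \tilde{\angle}\, q\,p\,q''$, since the right-hand side is $< \angle(c,c') + \angle(c,c'') + 2\delta < \angle(c',c'') - \delta < \tilde{\angle}\, q'\,p\,q''$, a contradiction.

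To find such a $q$, fix a small parameter $s$, put $q := c(s)$, form in $S_{0}$ the comparison triangles $\tilde{\Delta}(p, q', q)$ and $\tilde{\Delta}(p, q, q'')$, and glue them isometrically along the sides corresponding to the geodesic $[p,q]$ (both of length $d(p,q) = s$, as $c$ is a geodesic), with the images of $q'$ and $q''$ on opposite sides of the common line. In this planar figure the angle at the image of $p$ is $\tilde{\angle}\, q'\,p\,q + \tilde{\angle}\, q\,p\,q''$, and the angle at the image of $q$ is the sum $\psi_{1}(s) + \psi_{2}(s)$ of the two triangles' angles there. A short computation with the Euclidean law of cosines gives $\cos\psi_{i}(s) = \left( s - d(p,q^{(i)})\cos\tilde{\angle}\, q^{(i)}\,p\,q \right)/d(q, q^{(i)})$ for $q^{(1)} = q'$, $q^{(2)} = q''$; hence, as $s \to 0$, the image of $q$ collapses onto that of $p$, each $\psi_{i}(s)$ stays in the limit above $\pi - \angle(c,c^{(i)}) - \delta$, and therefore $\psi_{1}(s) + \psi_{2}(s) > \pi$ once $s$ is small enough, because $(\pi - \angle(c,c') - \delta) + (\pi - \angle(c,c'') - \delta) = 2\pi - \angle(c,c') - \angle(c,c'') - 2\delta > \pi$. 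Fixing such an $s$ (for $\kappa = 0$ we have $D_{0} = +\infty$, so all the perimeter hypotheses of Proposition~\ref{alex} are vacuous and only $\psi_{1} + \psi_{2} \geq \pi$ is required), I would feed the figure into Alexandrov's Lemma with apex $A$ the image of $p$ and shared vertex $C$ the image of $q$: straightening the hinge at $C$ replaces the figure by a Euclidean triangle $\overline{\Delta}$ whose sides adjacent to $\overline{A}$ still have lengths $s'$ and $s''$ and whose third side has length $d(q',q) + d(q,q'') \geq d(q',q'')$, and from the inequalities $\overline{\beta} \geq \beta$, $\overline{\beta'} \geq \beta'$ of Proposition~\ref{alex}, together with the monotonicity of a Euclidean triangle's angle in its opposite side, one should be able to read off $\tilde{\angle}\, q'\,p\,q'' \leq \tilde{\angle}\, q'\,p\,q + \tilde{\angle}\, q\,p\,q''$.

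The step I expect to be the main obstacle is precisely this last extraction: one must check that the straightening step of Alexandrov's Lemma moves the angle at $\overline{A}$ in the favourable direction and keep careful track of all the angle identities in the glued figure — the condition $\psi_{1} + \psi_{2} \geq \pi$, which is bought only by forcing $q$ very close to $p$, is what should rescue this, but since $q \to p$ simultaneously degenerates the two comparison triangles $\tilde{\Delta}(p,q',q)$ and $\tilde{\Delta}(p,q,q'')$, one has to make sure the estimate is not washed out by that degeneration. If the direct route stalls, the fallback is to keep $s$ of the same order as $s'$ and $s''$ so that all three comparison triangles stay non-degenerate, and to squeeze the inequality out of the resulting glued quadrilateral by one carefully oriented application of Proposition~\ref{alex}, possibly after also bringing in the point $c(2s)$ on $c$, whose comparison direction at $q$ is opposite to that of $p$.
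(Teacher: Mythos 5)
The paper itself gives no proof of this proposition (it is quoted from \cite{BH} with the remark ``see for example \cite{BH} for the proof''), so I compare your attempt with the standard argument for Proposition I.1.14 there. Your setup --- the choice of $\delta$, of $\epsilon_{0}$, of $s',s''$, and the asymptotic estimate showing $\psi_{1}(s)+\psi_{2}(s)>\pi$ for small $s$ --- is correct, but the step you yourself flag as the main obstacle is where the argument genuinely breaks. Alexandrov's Lemma, in its full form, says that straightening the hinge at $\overline{C}$ can only \emph{increase} the angle at the apex: $\overline{\alpha}\geq\alpha+\alpha'=\tilde{\angle}\,q'pq+\tilde{\angle}\,qpq''$. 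On the other hand, monotonicity of the Euclidean angle in the opposite side gives $\tilde{\angle}\,q'pq''\leq\overline{\alpha}$, because $d(q',q'')\leq d(q',q)+d(q,q'')$. Thus \emph{both} the quantity you want to bound and the bound you want are dominated by $\overline{\alpha}$, and no comparison between them follows. From the truncated version stated in the paper ($\overline{\beta}\geq\beta$, $\overline{\beta'}\geq\beta'$) one gets even less: it yields $\overline{\alpha}\leq\pi-\beta-\beta'$ and, using $\gamma+\gamma'\geq\pi$, also $\alpha+\alpha'\leq\pi-\beta-\beta'$, again with no way to compare the two.

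Moreover, the inequality you reduce everything to --- subadditivity of comparison angles, $\tilde{\angle}_{p}(q',q'')\leq\tilde{\angle}_{p}(q',q)+\tilde{\angle}_{p}(q,q'')$ --- is simply false in a general metric space, so the reduction cannot be rescued by more careful bookkeeping. For instance, the four-point metric space with $d(p,q)=s$, $d(p,q')=d(p,q'')=1$, $d(q,q')=d(q,q'')=1-s$, $d(q',q'')=1$ (a valid metric for $0<s<1/2$) has $\tilde{\angle}_{p}(q',q)=\tilde{\angle}_{p}(q,q'')=0$ but $\tilde{\angle}_{p}(q',q'')=\pi/3$. The standard proof avoids angle subadditivity altogether: one builds the comparison triangle for $(p,c'(s'),c''(s''))$ in the Euclidean plane, chooses an auxiliary point $x$ on the side opposite $\tilde{p}$ splitting the (too large) apex angle into pieces exceeding $\angle(c,c')+\delta$ and $\angle(c,c'')+\delta$, sets $s:=|x-\tilde{p}|$ (automatically $\leq\max(s',s'')\leq\epsilon_{0}$), and uses the law of cosines to deduce $d(c'(s'),c(s))<|\tilde{q}'-x|$ and $d(c(s),c''(s''))<|x-\tilde{q}''|$; the contradiction then comes from the triangle inequality for \emph{distances}, $d(q',q'')\leq d(q',c(s))+d(c(s),q'')<|\tilde{q}'-\tilde{q}''|=d(q',q'')$. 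The essential idea your proposal is missing is this choice of the intermediate parameter $s$ dictated by the model triangle, rather than by a limit $s\to 0$.
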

\begin{prp}[\cite{BH} Proposition II.1.7]\label{angle}
Let $\kappa \in \mathbb{R}$. For a geodesic metric space $(X, d)$, the following are equivalent.
\begin{itemize}
\item[(i)] $(X, d)$ is {\rm CAT}$(\kappa)$. 
\item[(ii)] For every $\kappa$-small geodesic triangle $\Delta_{abc}$, the angles at $a, b$ and $c$ are no greater than the corresponding angles of the comparison triangle $\tilde{\Delta}_{abc}$ in $S_{\kappa}$.
\end{itemize}
\end{prp}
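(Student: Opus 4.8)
The plan is to prove the two implications separately by elementary comparison geometry in the model surface $S_{\kappa}$. Two classical facts will be invoked without proof: the \emph{law of cosines} in $S_{\kappa}$, i.e.\ that in a $\kappa$-small triangle in $S_{\kappa}$ with two prescribed side lengths the length of the third side is a strictly increasing function of the angle between the first two (and conversely); and the fact that the angle of the preceding Definition may equally well be computed using $S_{\kappa}$-comparison triangles in place of Euclidean ones, the two comparison angles of a triangle differing by a quantity that tends to $0$ as two of its sides shrink to $0$ (see \cite{BH}). We also record that if $x$ is an interior point of a geodesic $[q,r]$ then the angle at $x$ between $[x,q]$ and $[x,r]$ is $\pi$, since the relevant Euclidean comparison triangles are degenerate.

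\emph{Proof of} (i)$\,\Rightarrow\,$(ii). Let $\Delta_{abc}$ be $\kappa$-small; by symmetry it suffices to bound the angle at $a$. For $p\in[a,b]$ and $q\in[a,c]$ near $a$, the ${\rm CAT}(\kappa)$ inequality applied to the pair $p,q\in\Delta_{abc}$ gives $d(p,q)\le d_{S_{\kappa}}(\tilde{p},\tilde{q})$, where $\tilde{p}\in[\tilde{a},\tilde{b}]$ and $\tilde{q}\in[\tilde{a},\tilde{c}]$ are the comparison points in $\tilde{\Delta}_{abc}$. By the law of cosines the angle, in an $S_{\kappa}$-triangle with sides $d(a,p)$ and $d(a,q)$, between those two sides is an increasing function of the length of the opposite side; hence it is no larger when the opposite side has length $d(p,q)$ than when it has length $d_{S_{\kappa}}(\tilde{p},\tilde{q})$. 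But the $S_{\kappa}$-triangle with side lengths $d(a,p),d(a,q),d_{S_{\kappa}}(\tilde{p},\tilde{q})$ is exactly the subtriangle of $\tilde{\Delta}_{abc}$ spanned by $\tilde{a},\tilde{p},\tilde{q}$ (because $\tilde{p}\in[\tilde{a},\tilde{b}]$ and $\tilde{q}\in[\tilde{a},\tilde{c}]$), so its angle at $\tilde{a}$ is the angle of $\tilde{\Delta}_{abc}$ at $\tilde{a}$. Thus the $S_{\kappa}$-comparison angle of $p,a,q$ is at most the angle of $\tilde{\Delta}_{abc}$ at $\tilde{a}$ for all such $p,q$, and passing to the superior limit as $p,q\to a$ gives $\angle a\le(\text{angle of }\tilde{\Delta}_{abc}\text{ at }\tilde{a})$.

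\emph{Proof of} (ii)$\,\Rightarrow\,$(i), \emph{vertex case.} We first show: for a $\kappa$-small $\Delta_{pqr}$ and any $x$ in the interior of $[q,r]$ (the endpoint cases being trivial), the comparison point $\tilde{x}\in[\tilde{q},\tilde{r}]$ with $d(\tilde{q},\tilde{x})=d(q,x)$ satisfies $d(p,x)\le d_{S_{\kappa}}(\tilde{p},\tilde{x})$. Fix a geodesic $[p,x]$; it cuts $\Delta_{pqr}$ into geodesic triangles $\Delta_{pqx}$ and $\Delta_{pxr}$, each $\kappa$-small since its perimeter is at most $d(p,q)+d(q,r)+d(r,p)$ (by $d(p,x)\le d(p,r)+d(r,x)$). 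By Proposition \ref{anglesum} and the remark above, $\angle_{x}(q,p)+\angle_{x}(p,r)\ge\angle_{x}(q,r)=\pi$; and since hypothesis (ii) makes the angles of $\tilde{\Delta}_{pqx}$ and $\tilde{\Delta}_{pxr}$ at $\tilde{x}$ at least $\angle_{x}(q,p)$ and $\angle_{x}(p,r)$ respectively, these two comparison angles sum to at least $\pi$. Gluing $\tilde{\Delta}_{pqx}$ and $\tilde{\Delta}_{pxr}$ in $S_{\kappa}$ along the sides corresponding to $[p,x]$, with $\tilde{q}$ and $\tilde{r}$ on opposite sides, Alexandrov's lemma (Proposition \ref{alex}, with $C\leftrightarrow x$, $A\leftrightarrow p$, $B\leftrightarrow q$, $B'\leftrightarrow r$; its metric hypotheses for $\kappa>0$ follow from the $\kappa$-smallness of $\Delta_{pqr}$ together with the triangle inequality) produces a triangle with side lengths $d(p,q),d(p,r),d(q,x)+d(x,r)=d(q,r)$ — that is, the comparison triangle $\tilde{\Delta}_{pqr}$ — and its conclusion $\overline{\beta}\ge\beta$ says the angle of $\tilde{\Delta}_{pqr}$ at $\tilde{q}$ is at least the angle of $\tilde{\Delta}_{pqx}$ at $\tilde{q}$. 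Interpreting the former inside the subtriangle of $\tilde{\Delta}_{pqr}$ spanned by $\tilde{p},\tilde{q},\tilde{x}$, which has two sides of lengths $d(p,q)$ and $d(q,x)$ in common with $\tilde{\Delta}_{pqx}$, the law of cosines yields $d_{S_{\kappa}}(\tilde{p},\tilde{x})\ge d(p,x)$.

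\emph{General case and expected obstacle.} Let $s,t\in\Delta_{pqr}$ lie on distinct sides, say $s\in[p,q]$ and $t\in[q,r]$ (the same-side case being trivial), with comparison points $\tilde{s},\tilde{t}$ in $\tilde{\Delta}_{pqr}$. Applying the vertex case to $\Delta_{pqr}$ with vertex $r$ and point $s$ gives $d(s,r)\le d_{S_{\kappa}}(\tilde{s},\tilde{r})$, so in the subtriangle of $\tilde{\Delta}_{pqr}$ spanned by $\tilde{s},\tilde{q},\tilde{r}$ — whose two sides at $\tilde{q}$ have lengths $d(s,q)$ and $d(q,r)$ — the side opposite $\tilde{q}$ is at least the corresponding side of $\tilde{\Delta}_{sqr}$, whence by the law of cosines the angle of $\tilde{\Delta}_{pqr}$ at $\tilde{q}$ is at least the angle of $\tilde{\Delta}_{sqr}$ at its $q$-vertex. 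Applying the vertex case to $\Delta_{sqr}$ with vertex $s$ and point $t$ bounds $d(s,t)$ by the distance between the comparison points of $s$ and $t$ in $\tilde{\Delta}_{sqr}$; comparing this triangle with the subtriangle of $\tilde{\Delta}_{pqr}$ spanned by $\tilde{s},\tilde{q},\tilde{t}$ — equal sides of lengths $d(s,q)$ and $d(q,t)$ at $\tilde{q}$, with the larger included angle occurring in $\tilde{\Delta}_{pqr}$ — a final use of the law of cosines gives $d(s,t)\le d_{S_{\kappa}}(\tilde{s},\tilde{t})$, i.e.\ the ${\rm CAT}(\kappa)$ inequality. The routine parts I would not write out are the two facts about $S_{\kappa}$ recorded at the start and the verification that every auxiliary triangle is $\kappa$-small; and the only real obstacle, in my view, is precisely this bookkeeping in (ii)$\,\Rightarrow\,$(i): keeping straight, through the chain of nested subtriangles in $S_{\kappa}$, which comparison triangle each marked point belongs to, and which perimeter bound licenses each appeal to Alexandrov's lemma and to the law of cosines.
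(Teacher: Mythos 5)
Your argument is correct. The paper does not prove this proposition --- it is quoted from Bridson--Haefliger (Proposition II.1.7) with the proof deferred to that reference --- and what you have written is a faithful reconstruction of the standard argument given there: the law of cosines for $(i)\Rightarrow(ii)$, and for $(ii)\Rightarrow(i)$ the reduction to the vertex case via the angle bound $\angle_{x}(q,p)+\angle_{x}(p,r)\geq\pi$ at an interior point of a side (Proposition \ref{anglesum}) followed by the gluing/straightening step of Alexandrov's lemma (Proposition \ref{alex}); your perimeter checks licensing each appeal to $\kappa$-smallness and to Alexandrov's lemma are also right.
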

The following proposition is technical but singnificant in this paper.
\begin{prp}[\cite{BH} Lemma II.4.11]\label{smoothin}
Let $\kappa$ be a real number, and $X$ be a metric space of curvature $\leq \kappa$. Let $q \colon [0, 1] \to X$ be a linearly  reparametrized geodesic connecting two distinct points $q(0)$ and $q(1)$, and let $p$ be a point in $X$ which is not on the image of $q$. Assume that for each $s \in [0, 1]$, there is a linearly reparametrized geodesic $c_{s} \colon [0, 1] \to X$ connecting $p$ and $q(s)$, varying continuously with $s$. We further assume that the geodesic triangle $\Delta_{q(0)pq(1)}$ is $\kappa$-small. Then the angles of $\Delta_{q(0)pq(1)}$ at $q(0), p$ and $q(1)$ are no greater than the corresponding angles of any comparison triangle $\tilde{\Delta}_{q(0)pq(1)}$ in $S_{\kappa}$.
\end{prp}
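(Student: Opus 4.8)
The plan is to deduce this local‑to‑global statement from the local ${\rm CAT}(\kappa)$ property of $X$ (``curvature $\le\kappa$'' meaning every point has a ${\rm CAT}(\kappa)$ neighbourhood) by a grid‑subdivision argument controlled by Alexandrov's Lemma (Proposition~\ref{alex}). Two preliminary remarks are needed. Because $q$ is a geodesic, $q(s)$ lies between $q(0)$ and $q(1)$, so the triangle inequality gives that every sub‑triangle $\Delta_{q(s)pq(s')}$ (for $0\le s\le s'\le 1$, with sides $c_s$, $c_{s'}$, $q|_{[s,s']}$) has perimeter no larger than that of $\Delta_{q(0)pq(1)}$; in particular it is $\kappa$‑small and admits a comparison triangle in $S_\kappa$, unique up to congruence. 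Secondly, $(s,u)\mapsto c_s(u)$ is continuous with compact image $K$; covering $K$ by finitely many ${\rm CAT}(\kappa)$ balls of radius $<D_\kappa/2$ (such balls are convex and uniquely geodesic) and taking a Lebesgue number, I fix partitions $0=t_0<\dots<t_n=1$ and $0=u_0<\dots<u_m=1$ fine enough that each ``cell'' $K_{ij}=\{c_s(u):s\in[t_{i-1},t_i],\ u\in[u_{j-1},u_j]\}$, together with the geodesics joining its four corners, lies inside one such ball; by uniqueness of geodesics there, those joining geodesics are precisely the expected sub‑arcs of the $c_{t_i}$ and of $q$.

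The heart of the proof is to assemble a comparison triangle for $\Delta_{q(0)pq(1)}$ in $S_\kappa$ out of the cells. Triangulate each $K_{ij}$ by a diagonal into two geodesic triangles; since each lies in a ${\rm CAT}(\kappa)$ ball, by Proposition~\ref{angle} the comparison triangle of each has all angles no smaller than the corresponding angles in $X$. I then lay down these planar pieces one at a time. Fixing $i$, I glue the cell comparison triangles of the row $K_{i1},\dots,K_{im}$ along the ``rung'' segments $[c_{t_{i-1}}(u_j),c_{t_i}(u_j)]$ and, at each interior vertex lying on $c_{t_{i-1}}$ or $c_{t_i}$, apply Alexandrov's Lemma to straighten the broken geodesic along $c_{t_{i-1}}$, resp.\ $c_{t_i}$; a count of side‑lengths shows the resulting planar triangle is a comparison triangle of the ``slab'' $\Delta_{q(t_{i-1})pq(t_i)}$. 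Then I glue the $n$ slab comparison triangles along the $[p,q(t_i)]$‑sides and, at each $q(t_i)$, apply Alexandrov's Lemma once more to straighten the broken geodesic along $q$, obtaining a comparison triangle $\tilde\Delta_{q(0)pq(1)}$. At \emph{every} invocation of Proposition~\ref{alex} the hypothesis $\gamma+\gamma'\ge\pi$ holds: the curve being straightened ($c_{t_i}$ or $q$) is an honest geodesic, so its two consecutive sub‑arcs meet at angle exactly $\pi$ in $X$ at the vertex concerned, and the two comparison angles $\gamma,\gamma'$ that are absorbed dominate the two corresponding $X$‑angles, whose sum is $\ge\pi$ by the subadditivity of angles (Proposition~\ref{anglesum}); the $\kappa$‑smallness noted above supplies the perimeter hypothesis when $\kappa>0$. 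The conclusions $\overline\beta\ge\beta$, $\overline{\beta'}\ge\beta'$ of Proposition~\ref{alex} (together with the ensuing inequality $\overline\alpha\ge\alpha+\alpha'$ at the absorbed apex) propagate through the assembly to give: the angle of $\tilde\Delta_{q(0)pq(1)}$ at each of $\tilde q(0),\tilde p,\tilde q(1)$ is no smaller than the corresponding sum of cell angles.

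In a little more detail for the vertex $p$: running the assembly as above, the angle at $p$ never decreases, so the angle of $\tilde\Delta_{q(0)pq(1)}$ at $\tilde p$ is $\ge\sum_i(\text{angle of the slab comparison triangle at }p)\ge\sum_i\angle(c_{t_{i-1}},c_{t_i})\ge\angle(c_0,c_1)$ — the middle inequality because the cell abutting $p$ lies in a ${\rm CAT}(\kappa)$ ball and has the same germ at $p$ as the whole triangle, and the last by Proposition~\ref{anglesum}. The vertices $q(0)$ and $q(1)$ are handled identically, using that $q|_{[0,s]}$ and $q$ have the same germ at their common endpoints, so the $X$‑angle of any sub‑triangle at $q(0)$, resp.\ $q(1)$, equals that of $\Delta_{q(0)pq(1)}$.

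The step I expect to be the main obstacle is the assembly itself: choosing the diagonals of the cells and the order of the straightenings so that Alexandrov's Lemma genuinely applies at every stage — that is, verifying throughout that the four points are distinct, that $B$ and $B'$ lie on opposite sides of the line through $A$ and $C$, that the perimeter bound holds when $\kappa>0$, and that $\gamma+\gamma'\ge\pi$ — and then keeping track of which planar angles are already in their final position. A minor point needing separate treatment is that the cell touching $p$ is a triangle rather than a quadrilateral, so the row assembly has a slightly different base case there.
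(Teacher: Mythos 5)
The paper gives no proof of this proposition --- it is quoted from \cite{BH} (Lemma II.4.11) --- and the proof there is precisely the Alexandrov patchwork you describe: subdivide the filled triangle into cells lying in ${\rm CAT}(\kappa)$ balls, compare each small triangle, and reassemble with Alexandrov's Lemma, checking $\gamma+\gamma'\ge\pi$ exactly as you do via Proposition \ref{anglesum} and the comparison for the small pieces. Your reconstruction is correct in approach and in the essential verifications; the only caveats are minor --- you use the conclusion $\overline{\alpha}\ge\alpha+\alpha'$ of Alexandrov's Lemma, which is part of the full statement in \cite{BH} but not of Proposition \ref{alex} as recorded here, and the asserted convexity of small ${\rm CAT}(\kappa)$ balls needs the standard shrinking argument to ensure the corner-joining geodesics stay inside them.
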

We study not only {\rm CAT}$(\kappa)$ spaces, but also locally {\rm CAT}$(\kappa)$ spaces. We recall some fundamental notions on them.
\begin{df}
A metric space $X$ is of {\it curvature} $\leq \kappa$ if for each point $x \in X$ there exist $r_{x} > 0$ such that the ball $B(x, r_{x})$ with the induced metric is ${\rm CAT}(\kappa)$.
\end{df}
The following well-known fact due to Alexandrov supports the significance for studying metric spaces of curvature $\leq \kappa$.
\begin{prp}[\cite{BH} Theorem I.1A.6]
A smooth Riemannian manifold is of curvature $\leq \kappa$ if and only if its sectional curvature is $\leq \kappa$.
\end{prp}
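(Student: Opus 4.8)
The plan is to prove the two implications separately, reducing each to the angle characterisation of ${\rm CAT}(\kappa)$ spaces in Proposition \ref{angle}: any sufficiently small metric ball in a Riemannian manifold $M$ is a geodesic metric space, so it is ${\rm CAT}(\kappa)$ precisely when the angle of every $\kappa$-small geodesic triangle inside it is no greater than the corresponding comparison angle. Both the hypothesis and the conclusion are local, so I fix $p \in M$ and only need to decide whether some ball $B(p,\rho)$ is ${\rm CAT}(\kappa)$. I take $\rho$ so small that $\overline{B}(p,\rho)$ is a convex, totally normal neighbourhood of $p$, every geodesic triangle contained in it is $\kappa$-small, and $\exp_q$ is nonsingular on the ball of radius $2\rho$ in $T_qM$ for every $q \in B(p,\rho)$. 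Beyond the metric-geometric facts already recalled in the excerpt, I will use two standard inputs from Riemannian geometry: the Rauch comparison theorem and the Taylor expansion of the metric (equivalently of the distance function) in normal coordinates.

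For ``sectional curvature $\le \kappa$ $\Rightarrow$ curvature $\le\kappa$'', fix $a, b, c \in B(p,\rho)$ and let $\alpha$ be the angle of the geodesic triangle $\Delta_{abc}$ at $a$. Because ${\rm sec}\le\kappa$ forces the first conjugate point of $\exp_a$ beyond $D_\kappa$, and $\overline{B}(p,\rho)$ is convex, the side $[b,c]$ lifts through $\exp_a$ to a curve $\omega$ in $T_aM$ from $\exp_a^{-1}(b)$ to $\exp_a^{-1}(c)$. Transplanting $\omega$ by a linear isometry onto the tangent space of the complete simply connected space form of curvature $\kappa$, the Rauch comparison theorem — which says $\exp_a$ expands lengths at least as much as the model exponential map does — yields
\[
d(b,c) \;=\; {\rm length}(\exp_a\circ\omega) \;\geq\; d_{S_\kappa}(\tilde b,\tilde c),
\]
where $\tilde a, \tilde b, \tilde c$ is the configuration in $S_\kappa$ (sitting in a totally geodesic surface of the model, so its dimension is irrelevant) with $d_{S_\kappa}(\tilde a,\tilde b)=d(a,b)$, $d_{S_\kappa}(\tilde a,\tilde c)=d(a,c)$, and angle $\alpha$ at $\tilde a$. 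For $\rho$ small this is a genuine triangle of perimeter $< 2D_\kappa$, hence determined, and since in $S_\kappa$ the angle between two sides of prescribed lengths increases monotonically with the opposite side, the comparison triangle $\tilde\Delta_{abc}$ — whose third side $d(b,c)$ is at least $d_{S_\kappa}(\tilde b,\tilde c)$ — has angle at the vertex over $a$ at least $\alpha$. Thus every vertex angle of every $\kappa$-small geodesic triangle in $B(p,\rho)$ is bounded above by its comparison angle, so $B(p,\rho)$ is ${\rm CAT}(\kappa)$ by Proposition \ref{angle}, and $M$ has curvature $\le\kappa$.

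For the converse I argue contrapositively. Suppose the sectional curvature ${\rm sec}(\sigma)$ of some $2$-plane $\sigma \subset T_pM$ exceeds $\kappa$, and let $u, v$ be an orthonormal basis of $\sigma$. For small $s > 0$ put $q_s = \exp_p(su)$ and $r_s = \exp_p(sv)$; then $d(p,q_s) = d(p,r_s) = s$, the angle of $\Delta_{p q_s r_s}$ at $p$ equals $\pi/2$, and the normal-coordinate expansion of the distance gives $d(q_s,r_s)^2 = 2s^2 - \tfrac13 {\rm sec}(\sigma)\, s^4 + o(s^4)$. Substituting the three side lengths into the $S_\kappa$ law of cosines and expanding in $s$, the apex angle $\tilde\alpha_s$ of the comparison triangle satisfies $\cos\tilde\alpha_s = \tfrac16\bigl({\rm sec}(\sigma) - \kappa\bigr) s^2 + o(s^2) > 0$ for all sufficiently small $s$, hence $\tilde\alpha_s < \pi/2$, the angle of $\Delta_{p q_s r_s}$ at $p$. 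So arbitrarily small geodesic triangles near $p$ have a vertex angle strictly exceeding the comparison angle; by Proposition \ref{angle} no neighbourhood of $p$ is then ${\rm CAT}(\kappa)$, so $M$ is not of curvature $\le\kappa$.

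I expect the first implication to be the main obstacle. The Rauch step itself is routine, but one must choose $\rho$ so that, simultaneously for every triple $a,b,c \in B(p,\rho)$, the lift $\exp_a^{-1}([b,c])$ is well defined — no conjugate points along the relevant rays, and a single-valued inverse on a convex region — and so that the induced $S_\kappa$-configuration is an honest triangle of perimeter $< 2D_\kappa$; this is exactly where the existence of convex totally normal neighbourhoods combines with the conjugate-point bound coming from ${\rm sec}\le\kappa$. The second implication is essentially a computation once the normal-coordinate expansion is in hand, and the cases $\kappa \le 0$ only change $\sin,\cos$ to $\sinh,\cosh$ in the law of cosines, leaving the leading term of $\cos\tilde\alpha_s$ unchanged.
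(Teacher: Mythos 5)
The paper does not prove this proposition at all: it is imported verbatim from \cite{BH} (Theorem II.1A.6 there), so there is no internal argument to compare against. Your proposal correctly reproduces the standard proof from that reference — Rauch comparison plus the monotonicity of the $S_{\kappa}$ law of cosines for ``$\sec\leq\kappa \Rightarrow$ curvature $\leq\kappa$'', and the fourth-order expansion of the distance function in normal coordinates for the converse — and the one delicate point you flag (choosing $\rho$ so that the ball is geodesic, every geodesic segment it contains is minimizing, and the lift $\exp_a^{-1}([b,c])$ is well defined) is exactly the technical care the textbook proof also takes; I see no gap.
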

\begin{ex}\label{ex4}
The standard projective space $\mathbb{RP}^{n}$ for $n \geq 2$ is of curvature $\geq 1$. Furthremore, it is {\bf not} ${\rm CAT}(1)$ since a closed geodesic which lifts to the geodesic semi-circle on $\mathbb{S}^{n}$ does not satisfy the ${\rm CAT}(1)$ angle condition.
\end{ex}
\begin{df}
For a metric space $X$, the {\it injectivity radius} $\iota_{X}$ is the supremum of $r \geq 0$ such that any two point of distance $< r$ is connected by the unique geodesic. The {\it systole} ${\rm Sys}(X)$ is the infimum of the length of closed geodesic in $X$ if there exist some, or 0 otherwise.
\end{df}
The following propositions shows the significance of the notions of the injectivity radius and the systole. See for example \cite{BH} for the proof. A metric space $X$ is called {\it cocompact} if there exist a compact subset $K \subset X$ such that $X = \bigcup_{f \in {\rm Isom}(X)} fK$ holds.
\begin{prp}[\cite{BH} Proposition II.4.16]\label{gromov}
Let $X$ be a cocompact proper geodesic metric space of curvature $\leq \kappa$. Then $X$ fails to be ${\rm CAT}(\kappa)$ if and only if there exists a closed geodesic of length $< 2D_{\kappa}$. Moreover, if there exist such a closed geodesic, then there exist a closed geodesic of length ${\rm Sys}(X) = 2\iota_{X}$.
\end{prp}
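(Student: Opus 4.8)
The plan is to prove the two implications of the stated equivalence separately and then deduce the numerical identity, invoking throughout the standard structure theory of ${\rm CAT}(\kappa)$ and locally ${\rm CAT}(\kappa)$ spaces in \cite{BH}: namely that in a ${\rm CAT}(\kappa)$ space a local geodesic of length $<D_{\kappa}$ is a geodesic and two points at distance $<D_{\kappa}$ are joined by a unique geodesic, and the local-to-global (Cartan--Hadamard/Gromov) theorem that a complete, locally ${\rm CAT}(\kappa)$ space in which every loop of length $<2D_{\kappa}$ contracts through loops of length $<2D_{\kappa}$ is globally ${\rm CAT}(\kappa)$. The easy implication is that a closed geodesic $\gamma$ of length $\ell<2D_{\kappa}$ obstructs ${\rm CAT}(\kappa)$: put $p=\gamma(0)$ and let $q$ be the point of $\gamma$ at arclength $\ell/2$; the two complementary arcs $\alpha,\beta$ of $\gamma$ from $p$ to $q$ are local geodesics of length $\ell/2<D_{\kappa}$ that are distinct as paths, since $\alpha=\bar\beta$ pointwise would give $\gamma(t)=\gamma(-t)$ near $0$, contradicting local injectivity of $\gamma$. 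If $X$ were ${\rm CAT}(\kappa)$, both $\alpha$ and $\beta$ would be geodesics, so $d(p,q)=\ell/2<D_{\kappa}$ and hence $\alpha=\beta$ by uniqueness --- a contradiction.

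For the converse, assume $X$ is not ${\rm CAT}(\kappa)$. First I would use cocompactness and properness to make the local ${\rm CAT}(\kappa)$ condition uniform: covering the compact set $K$ with $X=\bigcup_{f}fK$ by finitely many ${\rm CAT}(\kappa)$ balls, taking a Lebesgue number, and translating by isometries yields $r_{0}>0$ with $B(x,r_{0})$ ${\rm CAT}(\kappa)$ for every $x\in X$. By the local-to-global theorem, failure of the global ${\rm CAT}(\kappa)$ condition forces $X$ not to be $D_{\kappa}$-simply connected, so there is an ``essential'' loop of length $<2D_{\kappa}$ --- one that cannot be contracted through loops of length $<2D_{\kappa}$.

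Next I would minimize length over essential loops. Let $\sigma$ be the infimum of their lengths; moving a minimizing sequence by isometries to keep it inside a fixed compact set (cocompactness) and applying an Arzel\`a--Ascoli argument (properness) yields a loop $c$ of length $\sigma<2D_{\kappa}$ realizing the infimum, and $c$ stays essential as a uniform limit of essential loops of bounded length. Then $c$ must be a closed geodesic: a corner of $c$ at a point $x$ could be replaced, inside the ${\rm CAT}(\kappa)$ ball $B(x,r_{0})$, by the unique strictly shorter local geodesic between the two nearby points of $c$, producing a shorter loop in the same homotopy class and contradicting minimality --- this is where the comparison estimates of Propositions \ref{alex}, \ref{angle}, and \ref{smoothin} enter. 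Hence $X$ contains a closed geodesic of length $\sigma<2D_{\kappa}$. Under the standing hypothesis of the ``moreover'' clause --- that some closed geodesic of length $<2D_{\kappa}$ exists --- the same minimization, now carried out over all closed geodesics, shows that the infimum ${\rm Sys}(X)$ is attained by a closed geodesic $\gamma_{0}$ of length $L:={\rm Sys}(X)<2D_{\kappa}$.

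Finally I would show $L=2\iota_{X}$. For $\iota_{X}\le L/2$: the two half-arcs of $\gamma_{0}$ between its antipodal points $p_{0},q_{0}$ are each minimizing --- otherwise a strictly shorter path along one half, glued to the other, is a strictly shorter essential loop, contradicting $L={\rm Sys}(X)$ --- so $p_{0}$ and $q_{0}$, at distance $L/2$, are joined by two distinct geodesics and uniqueness fails there. For $\iota_{X}\ge L/2$: a failure of uniqueness at some scale $r<D_{\kappa}$ produces a geodesic bigon of length $2r$; such a bigon is essential (a short null-homotopic geodesic bigon in a space of curvature $\le\kappa$ must be degenerate, again by comparison), so shortening-and-minimizing it as above yields a closed geodesic of length $\le 2r$, whence $2r\ge{\rm Sys}(X)=L$. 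Therefore $\iota_{X}=L/2={\rm Sys}(X)/2$. I expect the crux of the whole argument to be the converse implication --- specifically the local-to-global theorem, which manufactures the short essential loop, and the compactness step realizing the systole, for which cocompactness and properness are exactly what is needed --- while the repeated ``straightening of corners'' that upgrades length-minimizers to genuine closed geodesics, though it must be done with care, is routine given the comparison lemmas recalled above.
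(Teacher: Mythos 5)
The paper does not prove this statement: it is quoted verbatim from Bridson--Haefliger (Proposition II.4.16, essentially Bowditch's theorem) and used as a black box, so there is no in-paper argument to compare yours against. Your outline is the standard proof from that source and is essentially sound: the easy direction via failure of uniqueness of geodesics between antipodal points at distance $<D_{\kappa}$, and the converse via the local-to-global theorem, Arzel\`a--Ascoli minimization over essential loops, and corner-straightening, followed by the bigon argument identifying ${\rm Sys}(X)$ with $2\iota_{X}$. Two points deserve care if you were to write this out. First, this paper's Definition \ref{closed} takes a closed geodesic to be an \emph{isometric embedding} of the metric circle $C_{r}$ (this strong form is exactly what Proposition \ref{comp} and Theorem \ref{clogeo} consume), whereas your minimization a priori produces only a closed local geodesic; you need the extra step showing that on the length-minimizing essential loop every chord realizes the arc-distance (otherwise a shortcut yields a shorter essential loop, by splitting the loop into two sub-loops at least one of which must remain essential). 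Second, your uniformization step is slightly too quick: a ball contained in a ${\rm CAT}(\kappa)$ ball need not itself be ${\rm CAT}(\kappa)$ in the induced metric, since subspaces inherit the comparison inequality only when convex; what one actually extracts from cocompactness is a uniform $r_{0}$ such that all sufficiently small triangles satisfy the comparison inequality, which suffices for the straightening argument. Neither issue is a flaw in the strategy, only in the level of detail.
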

\section{Preliminary on magnitude homology}
In this section, we briefly recall some notations on magnitude homology from \cite{YK}. Through this section, $(X, d)$ denotes a metric space unless otherwise mentioned.
\begin{df}
An $(n+1)$-tuple $(x_{0}, \dots, x_{n}) \in X^{n+1}$ is called an $n$-{\it chain} of $X$. An $n$-chain is {\it proper} if $x_{i} \not = x_{i+1}$ for all $0\leq i \leq n$. The {\it length} $|x|$ of $n$-chain $x$ is defined by 
\[
|x| := \sum_{i=0}^{n-1} d(x_{i}, x_{i+1}).
\]
\end{df}
We denote the set of all proper $n$-chains of length $l$ by $P^{l}_{n}(X)$, and $P_{n}(X)$ denotes the union of them running through all $l \geq 0$. Let $MC^{l}_{n}(X)$ be the abelian group freely generated by $P^{l}_{n}(X)$.
\begin{df}
Let $a$ and $b$ be points in $X$. A point $c \in X$ is {\it smooth between} $a$ {\it and} $b$ if the equality $d(a, b) = d(a, c) + d(c, b)$ holds. We denote $a \prec c \prec b$ if $c$ is a smooth point between $a$ and $b$ with $a \not = c$ and $b\not = c$.
\end{df}
\begin{df}
The {\it magnitude chain complex} $(MC^{l}_{*}(X), \partial_{*}:= \sum_{i=1}^{*} (-1)^{i}\partial^{i}_{n})$ is defined by 
\[
\partial^{i}_{n}(x_{0}, \dots, x_{n}) = \begin{cases}(x_{0}, \dots, \hat{x}_{i}, \dots, x_{n}) & ({\rm if\ } x_{i-1} \prec x_{i} \prec x_{i+1}) \\ 0 & ({\rm otherwise}). \end{cases}
\]
The homology group of magnitude chain complex is called the {\it magnitude homology group} of $X$, and denoted by $MH^{l}_{*}(X)$.
\end{df}
\begin{df}
If the point $x_{i}$ of $x = (x_{0}, \dots, x_{n}) \in P_{n}(X)$ is {\bf not} a smooth point between $x_{i-1}$ and $x_{i+1}$, then we call it a {\it singular point} of $x$. We set the endpoints $x_{0}$ and $x_{n}$ singular points. Let $\varphi(x) = (x_{s_{0}} = x_{0}, x_{s_{1}}, \dots, x_{s_{k}}= x_{n}) \in P_{k}(X)$ be the tuple of all singular points of $x$. We call $\varphi(x)$ the {\it frame} of $x$. A chain $x \in P_{n}^{l}(X)$ is {\it geodesically simple} if $|\varphi(x)| = |x|$ holds.
\end{df}
Let $P^{F}_{n}(X)$ be the set of all geodesically simple $n$-chains whose frame is $F \in P^{|F|}_{\leq n}(X) : = \bigcup_{k \leq n}P^{|F|}_{k}(X)$. We denote the abelian group freely generated by $P^{F}_{n}(X)$ by $MC^{F}_{n}(X)$. We set  
\[
MC^{{\rm simp}, l}_{n}(X) := \bigoplus_{F \in P^{l}_{\leq n}(X)} MC^{F}_{n}(X).
\]
Note that both $MC^{F}_{*}(X)$ and $MC^{{\rm simp}, l}_{n}(X)$ are subcomplexes of $MC^{*}_{*}(X)$, and we denote their homology by  $MH^{F}_{*}(X)$ and $MH^{{\rm simp}, l}_{n}(X)$ respectively. 
\begin{df}
A proper $3$-chain $x = (x_{0}, x_{1}, x_{2}, x_{3})$ is a {\it $4$-cut} if $\varphi(x) = (x_{0}, x_{3})$ and $d(x_{0}, x_{3}) < |x|$ holds.
\end{df}
\begin{df}\label{4cut}
We define $m_{X}$ to be the infimum of the length of 4-cuts in $X$.
\end{df}
The following theorem is shown in \cite{YK}.
\begin{thm}[\cite{YK} Theorem 3.12, Theorem 5.11]\label{yk1}

\begin{itemize}
\item[(1)] 
\[
MH^{{\rm simp}, l}_{n}(X) \cong \bigoplus_{F \in P^{l}_{\leq n}} MH^{F}_{n}(X).
\]
\item[(2)] For $n > 0$ and $0 < l < m_{X}$,
\[
MH^{{\rm simp}, l}_{n}(X) \cong MH^{l}_{n}(X).
\]
\item[(3)] For a proper $1$-chain $F = (x_{0}, x_{1})$, the natural map 
\[
MH^{F}_{n}(X) \to MH^{|F|}_{n}(X)
\]
is injective.
\end{itemize}
\end{thm}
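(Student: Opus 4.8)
The plan is to derive all three statements from the fact that the magnitude differential only ever deletes \emph{smooth} (non-singular) vertices, together with the observation that inside a geodesically simple chain the distance is additive along each maximal geodesic segment.

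For (1), I would first show that each $MC^F_*(X)$ is a subcomplex of $MC^*_*(X)$. Since $MC^{{\rm simp},l}_n(X)=\bigoplus_{F\in P^l_{\leq n}}MC^F_n(X)$ by definition and homology commutes with direct sums, part (1) follows immediately. To see the subcomplex claim, take $x=(x_0,\dots,x_n)$ geodesically simple with frame $F=(x_{s_0},\dots,x_{s_k})$ and suppose $\partial^i_n x\neq 0$, i.e.\ $x_{i-1}\prec x_i\prec x_{i+1}$. Then $x_i$ is smooth, hence $i\notin\{s_0,\dots,s_k\}$, so $x_i$ lies strictly inside one geodesic segment $x_{s_j},\dots,x_{s_{j+1}}$, along which $d(x_p,x_q)=\sum_{t=p}^{q-1}d(x_t,x_{t+1})$ for all $s_j\leq p\leq q\leq s_{j+1}$. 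Using this additivity and the triangle inequality one checks that $\partial^i_n x$ is still proper (in particular $x_{i-1}\neq x_{i+1}$), is still geodesically simple, and has exactly the same singular vertices as $x$; hence $\partial^i_n x\in MC^F_{n-1}(X)$.

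For (2), I would prove the stronger assertion that for $0<l<m_X$ every proper $n$-chain of length $l$ is geodesically simple, so that $MC^{{\rm simp},l}_*(X)=MC^l_*(X)$ as complexes and the isomorphism is the identity. Suppose $x=(x_0,\dots,x_n)\in P^l_n(X)$ has $|\varphi(x)|<|x|$. Writing $|x|$ as the sum over the segments of $x$ between consecutive singular vertices, some such segment $x_a,\dots,x_b$, all of whose interior vertices are smooth, satisfies $d(x_a,x_b)<L_b$, where $L_i:=\sum_{t=a}^{i-1}d(x_t,x_{t+1})$. Smoothness gives $d(x_a,x_i)=L_i$ for $i=a,a+1,a+2$, so the least index $i^*$ with $d(x_a,x_{i^*})<L_{i^*}$ satisfies $a+3\leq i^*\leq b$, while $d(x_a,x_i)=L_i$ for every $i<i^*$. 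Then $(x_a,x_{i^*-2},x_{i^*-1},x_{i^*})$ is proper with both interior vertices smooth (for $x_{i^*-2}$ because $d(x_a,x_{i^*-1})=L_{i^*-1}=L_{i^*-2}+d(x_{i^*-2},x_{i^*-1})$; for $x_{i^*-1}$ because it is an interior vertex of the segment), and $d(x_a,x_{i^*})<L_{i^*}$, which is the length of this $3$-chain. So it is a $4$-cut of length $L_{i^*}\leq L_b\leq|x|=l<m_X$, contradicting Definition \ref{4cut}. For $n=0$ both sides vanish for $l>0$, so the restriction $n>0$ is harmless.

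For (3), fix $F=(x_0,x_1)$. The crucial remark is that any proper $n$-chain $z=(z_0,\dots,z_n)$ of length $|F|=d(x_0,x_1)$ with $z_0=x_0$ and $z_n=x_1$ is automatically geodesically simple with frame $F$: from $d(x_0,x_1)\leq|\varphi(z)|\leq|z|=d(x_0,x_1)$ we get $|\varphi(z)|=|z|$, and iterating the triangle inequality along $z$ forces every interior $z_i$ to be smooth. Hence the rule $r(z)=z$ if $(z_0,z_n)=(x_0,x_1)$ and $r(z)=0$ otherwise defines a homomorphism $MC^{|F|}_n(X)\to MC^F_n(X)$. Since the face maps preserve both the endpoints and (as they delete smooth vertices) the length of a chain, $r$ commutes with $\partial$, so $r$ is a chain map with $r\circ\iota={\rm id}$ for the inclusion $\iota\colon MC^F_*(X)\hookrightarrow MC^{|F|}_*(X)$; therefore $\iota_*\colon MH^F_n(X)\to MH^{|F|}_n(X)$ is split injective, in particular injective.

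The step I expect to be the real obstacle is the verification in (1) that a face map literally preserves the frame — specifically, that a vertex of $F$ adjacent to the deleted smooth vertex does not itself turn smooth. This is where segment-additivity of distance does the work, but it also needs a further triangle-inequality estimate (for the new pair of neighbours) to rule out an accidental new equality. The extraction of the $4$-cut in (2) is the other delicate point, the subtlety being the correct choice of the index $i^*$.
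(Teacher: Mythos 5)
Your proposal is correct, but note that the paper itself offers no proof of this statement: it is quoted verbatim from Kaneta--Yoshinaga with a citation, so there is nothing internal to compare against. Judged on its own terms, your argument is sound and self-contained. Part (2) via the extraction of a $4$-cut from a non-geodesically-simple chain (with the minimal index $i^*\geq a+3$ at which additivity first fails) is exactly the right mechanism linking $m_X$ to geodesic simplicity, and part (3) is the standard observation that $MC^{|F|}_*(X)$ splits as a direct sum of subcomplexes indexed by endpoint pairs, the $(x_0,x_1)$-summand being $MC^F_*(X)$ because length-minimality forces every interior vertex to be smooth. The one step you flagged as a possible obstacle in (1) does close: if $x_{s_j}$ is singular and the deleted smooth vertex is $x_{s_j+1}$, then segment-additivity gives $d(x_{s_j},x_{s_j+2})=d(x_{s_j},x_{s_j+1})+d(x_{s_j+1},x_{s_j+2})$, and combining the triangle inequality $d(x_{s_j-1},x_{s_j+2})\leq d(x_{s_j-1},x_{s_j+1})+d(x_{s_j+1},x_{s_j+2})$ with the strict inequality $d(x_{s_j-1},x_{s_j+1})<d(x_{s_j-1},x_{s_j})+d(x_{s_j},x_{s_j+1})$ expressing singularity yields $d(x_{s_j-1},x_{s_j+2})<d(x_{s_j-1},x_{s_j})+d(x_{s_j},x_{s_j+2})$, so $x_{s_j}$ stays singular and no accidental smoothing occurs. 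With that estimate written out, all three parts are complete.
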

\begin{df}
Let $a$ and $b$ be points in $X$. The {\it interval poset} $I(a, b)$ is a poset which consists of smooth points between $a$ and $b$, and the partial order $\leq$ among them is defined by 
\[
x \leq y \Leftrightarrow a \prec x \preceq y .
\]
Note that this definition is equvalent to
\[
x \leq y \Leftrightarrow x \preceq y \prec b.
\]
\end{df}
We recall the definition of the order complex and its reduced chain complex of a poset.
\begin{df}
Let $P$ be a poset. The {\it order complex of } $P$ denoted by $\Delta(P)$ is the abstract simplicial complex whose $n$-simplices are the subsets $\{x_{0}, \dots, x_{n}\}$ of P such that  $x_{0} \prec \cdots \prec x_{n}$. Its {\it reduced chain complex} denoted by $(C_{*}(\Delta(P)), \partial_{*})$ is defined by 
\[
C_{n}(\Delta(P)) = \begin{cases} \bigoplus_{x_{0}\prec \cdots \prec x_{n}}\mathbb{Z}\langle x_{0}, \dots, x_{n} \rangle  &n\geq 0,\\ \mathbb{Z} & n = -1, \\ 0 & n < -1,\end{cases}
\]
and $\partial_{n} = \sum_{i=0}^{n} (-1)^{i}\partial_{n}^{i}$ with
\[
\partial_{n}^{i}(\langle x_{0}, \dots, x_{n} \rangle) = \begin{cases} \langle x_{0}, \dots , \hat{x_{i}}, \dots, x_{n}\rangle & n\geq 0,\\ 0 & n < 0.\end{cases}
\]
\end{df}
The following theorem is also shown in \cite{YK}.
\begin{thm}[\cite{YK} Corollary 4.5]\label{yk2}
For a proper $m$-chain $F = (x_{0}, \dots, x_{m})$, we have 
\[
MH^{F}_{n}(X) \cong H_{n-2m} (C_{*}(\Delta_{0,1})\otimes C_{*}(\Delta_{1, 2}) \otimes \cdots \otimes C_{*}(\Delta_{m-1, m})),
\]
where $C_{*}(\Delta_{i, i+1})$ is the reduced chain complex of ordered complex $\Delta(I(x_{i}, x_{i+1}))$.
\end{thm}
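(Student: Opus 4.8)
The plan is to construct an explicit chain-level isomorphism between $MC^{F}_{*}(X)$ and a degree shift of the tensor product complex $C_{*}(\Delta_{0,1})\otimes\cdots\otimes C_{*}(\Delta_{m-1,m})$, where $\Delta_{i,i+1}=\Delta(I(x_{i},x_{i+1}))$, and then read off the homology. Throughout I would take $F$ to be reduced, i.e.\ $\varphi(F)=F$, and (as appears to be needed for the statement) assume $|F|<m_{X}$; this last hypothesis is exactly what makes the combinatorial description below go through.

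\emph{Identifying the generators.} First I would observe that a geodesically simple $n$-chain $x$ with $\varphi(x)=F=(x_{0},\dots,x_{m})$ is cut by its frame points into $m$ consecutive blocks, the $i$-th running from $x_{i}$ to $x_{i+1}$. Geodesic simplicity forces every block to be straight, and then — by the elementary fact that a string of tight triangle inequalities propagates to every intermediate pair — the interior points of block $i$ are smooth points strictly between $x_{i}$ and $x_{i+1}$, with consecutive ones comparable, hence they form a simplex $\sigma_{i}$ of $\Delta_{i,i+1}$ (the empty simplex when the block has no interior point). Conversely, from simplices $\sigma_{0},\dots,\sigma_{m-1}$ one reassembles a chain $\Phi(\sigma_{0},\dots,\sigma_{m-1})$ by concatenating $x_{0},\sigma_{0},x_{1},\dots,\sigma_{m-1},x_{m}$, each $\sigma_{i}$ written in increasing order. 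The point requiring care is that this reassembled chain is again geodesically simple with frame exactly $F$: the only obstruction is that it could ``straighten'' at some $x_{i}$, meaning $x_{i}$ becomes a smooth point between its new neighbours; but such a straightening at $x_{i}$ exhibits a genuine $4$-cut of length $\le|F|$ — for instance the triple consisting of the last interior point of block $i-1$, then $x_{i}$, then $x_{i+1}$, with the strict inequality coming from $F$ being reduced — contradicting $|F|<m_{X}$. Since a $p_{i}$-simplex contributes $p_{i}+1$ points, $\Phi(\sigma_{0},\dots,\sigma_{m-1})$ is an $n$-chain with $n=\sum_{i}(p_{i}+1)+m=\sum_{i}p_{i}+2m$; hence $\Phi$ is a bijection from the degree $n-2m$ part of $C_{*}(\Delta_{0,1})\otimes\cdots\otimes C_{*}(\Delta_{m-1,m})$ onto the basis $P^{F}_{n}(X)$ of $MC^{F}_{n}(X)$, i.e.\ a graded-group isomorphism after a shift by $2m$.

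\emph{Matching the differentials.} Next I would verify that $\Phi$ is a chain map up to a global sign. Restricted to $MC^{F}_{*}(X)$, the magnitude differential $\sum_{i}(-1)^{i}\partial^{i}$ can only delete an interior point of some block: the two endpoints lie outside the range of summation, and an interior frame point is singular, hence not a smooth point between its neighbours, so the corresponding $\partial^{i}$ vanishes. Deleting the interior point in position $t$ of block $i$ is precisely the face map of $\sigma_{i}$ omitting its $(t-1)$-st vertex; when that block has only one interior point this face map lands in the empty simplex, i.e.\ it is the augmentation $C_{0}\to C_{-1}$ — which is why the \emph{reduced} chain complex, rather than the ordinary one, is the right object here. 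A straightforward count of positions along the chain then shows that the magnitude sign $(-1)^{(\text{position of the deleted point})}$ equals, up to one overall sign, the product of the Koszul sign $(-1)^{p_{0}+\cdots+p_{i-1}}$ and the simplicial face sign $(-1)^{t-1}$ that appears in the tensor product complex. An overall sign is immaterial for homology, so $\Phi$ induces
\[
MH^{F}_{n}(X)\;\cong\;H_{n-2m}\!\bigl(C_{*}(\Delta_{0,1})\otimes\cdots\otimes C_{*}(\Delta_{m-1,m})\bigr),
\]
which is the assertion.

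\emph{Main obstacle.} The hard part will be the generator correspondence, and within it the claim that reassembling blocks never straightens at a frame point: the interval posets $I(x_{i},x_{i+1})$ and the way adjacent blocks are glued do not obviously interact, and it is here that the metric-geometric hypothesis $|F|<m_{X}$ must be brought in. This hypothesis is genuinely necessary: without it the isomorphism can fail, as one can see by building a small finite metric space and a reduced $2$-chain $F$ with $|F|=m_{X}$ for which $MH^{F}_{2}(X)\neq0$ while the right-hand side vanishes. The sign computation in the second step is routine but fiddly, and should be cross-checked against the empty-block/augmentation behaviour noted above.
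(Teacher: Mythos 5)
First, note that the paper itself gives no proof of this statement: it is quoted from Kaneta--Yoshinaga \cite{YK} (their Corollary 4.5), so there is no internal argument to compare yours against. That said, your strategy --- biject the geodesically simple $n$-chains with frame $F$ with tuples of simplices of the order complexes $\Delta(I(x_{i},x_{i+1}))$, check the degree count $n=2m+\sum_{i}p_{i}$, and match the magnitude differential with the tensor-product differential up to a global sign --- is exactly the natural (and surely the intended) route, and the individual verifications you sketch are sound: tight triangle inequalities do propagate through a straight block, deleting the unique interior point of a block does land in the augmentation $C_{0}\to C_{-1}$ (which is precisely why the reduced complexes appear), and the sign bookkeeping works out to a single overall sign.

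The one point needing attention is that you have quietly strengthened the hypotheses: you assume $\varphi(F)=F$ and $|F|<m_{X}$, neither of which appears in the statement as cited. You are right that the generator correspondence genuinely requires some such condition once $m\ge 2$: on the circle of circumference $3$ with $F=(0,1,2)$ in arc-length coordinates, the point $1$ is singular in $F$ and the points $0.5\in I(0,1)$, $1.5\in I(1,2)$ exist, yet $1$ is smooth between $0.5$ and $1.5$, so the reassembled chain does not have frame $F$; here $|F|=2>3/2=m_{X}$, consistent with your diagnosis. So as written your argument proves the theorem only under the extra hypotheses, and you should either confirm that \cite{YK} carries them (or an equivalent definition of $P^{F}_{n}(X)$ that builds the product structure in) or state the restriction explicitly --- and actually exhibit the counterexample you allude to if you want to claim necessity at the level of homology rather than of generators. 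Note also that the same coupling threatens closure of $MC^{F}_{*}(X)$ under the differential (deleting the block point adjacent to an interior frame point changes that frame point's neighbours), which your hypothesis cures by the same $4$-cut argument but which deserves an explicit sentence. None of this affects the present paper: Corollary \ref{vanish} and Theorem \ref{clogeo} only invoke the theorem for $1$-chains $F=(x_{0},x_{1})$, where there are no interior frame points and your argument is clean and unconditional.
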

\section{Magnitude homology of ${\rm CAT}(\kappa)$ spaces}
In this section, we study the magnitude homology of ${\rm CAT}(\kappa)$ space $X$. We will show that the magnitude homology $MH^{l}_{n}(X)$ vanishes for $0 < l < D_{\kappa}$ and $n > 0$. Conseqently, we can compute a total $\mathbb{Z}$-degree magnitude homology for some length $l$ for the spaces in Examples \ref{ex1}, \ref{ex2}, and \ref{ex3}. For the purpose, we introduce a quantity $l_{X}$ for a metric space $X$.
\begin{df}
Let $X$ be a metric space. A continuous map $\gamma \colon [0, c] \to X$ is {\it locally geodesic} if for every $t \in [0, c]$, there exists a neighborhood $U$ of $t$ such that $\gamma|_{U}$ is a geodesic.
\end{df}
\begin{df}
Let $(X, d)$ be a metric space, and let $\gamma \colon [0, c] \to X$ be a continuous map. We define {\it the length of} $\gamma$ by
\[
|\gamma| := \sup_{0 = t_{0} \leq t_{1} \leq \cdots \leq t_{n} = c}\sum_{i=0}^{n} d(\gamma(t_{i}), \gamma(t_{i+1})),
\]
where the supremum is taken over all partitions of $[0, c]$.
\end{df}
\begin{df}\label{lx}
For a metric space $X$, we define $l_{X}$ to be the infimum of length of locally geodesic paths which is not geodesics.
\end{df}
\begin{lem}\label{lem1}
For a geodesic ${\rm CAT}(\kappa)$ space $(X, d)$, we have 
\[
D_{\kappa} \leq l_{X}.
\]
\end{lem}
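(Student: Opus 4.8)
The plan is to argue by contradiction. Suppose $l_X < D_\kappa$; then there exists a locally geodesic path $\gamma \colon [0,c] \to X$, parametrised by arc length, that is \emph{not} a geodesic, with $c = |\gamma| < D_\kappa$. First I would record two elementary observations: a locally geodesic arc-length path is a local isometric embedding, so $|\gamma|_{[0,t]}| = t$ for all $t$; and $\gamma|_{[0,t]}$ is a geodesic precisely when $d(\gamma(0),\gamma(t)) = t$ (the nontrivial direction uses that $d(\gamma(0),\gamma(s)) \le s$ and $d(\gamma(s'),\gamma(t)) \le t - s'$ by the previous point, together with the triangle inequality for $\gamma(0),\gamma(s),\gamma(s'),\gamma(t)$). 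Consequently the set $E := \{t \in [0,c] : d(\gamma(0),\gamma(t)) = t\}$ is closed, contains a neighbourhood of $0$, and does not contain $c$; set $t_0 := \max E$, so $0 < t_0 < c$.

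Next I would derive a contradiction at $t_0$. Since $\gamma$ is a geodesic on some neighbourhood of $t_0$, choose $\epsilon > 0$ with $t_0 + \epsilon \le c$ and $\gamma|_{[t_0 - \epsilon, t_0 + \epsilon]}$ a geodesic, and consider the geodesic triangle $\Delta$ with vertices $\gamma(0), \gamma(t_0), \gamma(t_0+\epsilon)$ whose first two sides are the $\gamma$-subarcs $\gamma|_{[0,t_0]}$ and $\gamma|_{[t_0,t_0+\epsilon]}$ and whose third side is any geodesic $[\gamma(0),\gamma(t_0+\epsilon)]$ (which exists since $X$ is geodesic). Its perimeter is at most $t_0 + \epsilon + d(\gamma(0),\gamma(t_0+\epsilon)) < 2(t_0+\epsilon) \le 2c < 2D_\kappa$, so $\Delta$ is $\kappa$-small. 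Because $\gamma$ is geodesic near $t_0$, the reversed arc $s \mapsto \gamma(t_0 - s)$ and the arc $s \mapsto \gamma(t_0 + s)$ fit together into a single geodesic through $\gamma(t_0)$; hence for small parameters $s, s'$ the comparison triangle $\tilde\Delta_{\gamma(t_0-s)\,\gamma(t_0)\,\gamma(t_0+s')}$ in $S_0$ has side lengths $s, s', s+s'$ and is degenerate, so $\tilde\angle \gamma(t_0-s)\,\gamma(t_0)\,\gamma(t_0+s') = \pi$. Taking $\limsup$ as $s, s' \to 0$, the angle of $\Delta$ at $\gamma(t_0)$ equals $\pi$. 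By Proposition \ref{angle}(ii), the angle of the comparison triangle $\tilde\Delta$ in $S_\kappa$ at $\tilde{\gamma(t_0)}$ is $\ge \pi$, hence exactly $\pi$.

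Finally I would invoke the elementary fact that a triangle in $S_\kappa$ with a vertex angle equal to $\pi$ is degenerate: its other two vertices lie on a geodesic line through that vertex on opposite sides, at distances $t_0$ and $\epsilon$, so the opposite side has length $t_0 + \epsilon$ provided this is $\le D_\kappa$. Here $t_0 + \epsilon \le c < D_\kappa$, so the opposite side is a minimizing segment of length $t_0 + \epsilon$ (this is the only place where the case $\kappa > 0$ needs a moment's care; for $\kappa \le 0$ it is automatic). Therefore $d(\gamma(0),\gamma(t_0+\epsilon)) = d_{S_\kappa}(\tilde{\gamma(0)},\tilde{\gamma(t_0+\epsilon)}) = t_0 + \epsilon$, i.e.\ $t_0 + \epsilon \in E$, contradicting the maximality of $t_0$. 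Hence $l_X \ge D_\kappa$ (the statement being vacuous if $X$ admits no non-geodesic locally geodesic path, in which case $l_X = +\infty$).

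The main obstacle is the bookkeeping that keeps all relevant lengths strictly below $D_\kappa$, so that comparison triangles exist and are governed by Proposition \ref{angle}, together with extracting the value $\pi$ for the interior angle of $\Delta$ at $\gamma(t_0)$ from the $\limsup$ definition; everything else is routine. One could alternatively shortcut the argument by citing the standard fact (\cite{BH}, Proposition II.1.4) that in a ${\rm CAT}(\kappa)$ space every local geodesic of length $< D_\kappa$ is a geodesic, but the comparison-triangle argument above is self-contained and uses only tools already recalled in the paper.
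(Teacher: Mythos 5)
Your proof is correct and follows essentially the same route as the paper's: locate the first parameter at which geodesicity fails, extend slightly using local geodesicity, observe that the resulting $\kappa$-small geodesic triangle has angle $\pi$ at the breaking point, and invoke Proposition \ref{angle} to force the comparison triangle in $S_{\kappa}$ to degenerate, contradicting maximality. The differences from the paper are only cosmetic (your closed set $E$ and small $\epsilon$-extension versus the paper's two suprema $a$ and $b$, and your explicit third-side computation versus the paper's appeal to the length comparison).
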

\begin{proof}
Assume $l_{X} < D_{\kappa}$. Then there exists a map $\gamma \colon [0, c] \to X$ which is locally geodesic but not a geodesic, satifying $l_{X} \leq |\gamma| < D_{\kappa}$. Let $a$ be the supremum of the numbers $0< t < c$ such that $\gamma|_{[0, t]}$ is a geodesic. Then $\gamma|_{[0, a]}$ is a geodesic by the continuity of $d$. Note that $a$ is positive since $\gamma$ is locally geodesic. Let $b$ be the supremum of numbers $a < t \leq c$ such that $\gamma|_{[a, t]}$ is a geodesic. Then $\gamma|_{[a, b]}$ is also a geodesic, and $b$ is positive. Let $\delta$ be a geodesic between $\gamma(0)$ and $\gamma(b)$. (See Figure \ref{fig1}.) By the assumption $|\gamma| < D_{\kappa}$, the geodesic triangle $\Delta_{\gamma(0), \gamma(a), \gamma(b)}$ satisfies $d(\gamma(0), \gamma(a)) + d(\gamma(a), \gamma(b)) + d(\gamma(b), \gamma(0)) < 2D_{\kappa}$. Hence $\Delta_{\gamma(0), \gamma(a), \gamma(b)}$ is $\kappa$-small. Note that its angle at $\gamma(a)$ is $\pi$ because $\gamma$ is locally geodesic. Therefore by the {\rm CAT}$(\kappa)$ condition for angles in Proposition \ref{angle}, the angles of the comparison triangle of $\Delta_{\gamma(0), \gamma(a), \gamma(b)}$ at $\tilde{\gamma}(0)$ and $\tilde{\gamma}(b)$ are both 0. By the {\rm CAT}$(\kappa)$ condition for length, there is a 1-1 correspondence between $\gamma|_{[0, b]}$ and $\delta$, which implies $\gamma|_{[0, b]}$ is a geodesic. This contradicts the definition of $a$.
\end{proof}
\begin{figure}
\centering
        \def \svgwidth{60mm}
        \includegraphics{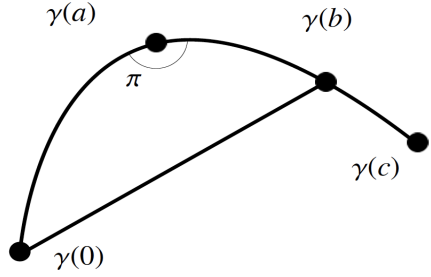}
\caption{Description for the proof of Lemma \ref{lem1}}
\label{fig1}
\end{figure}
Recall that the quantity $m_{X}$ is the infimum of the length of 4-cuts in $X$. (Defitiniton \ref{4cut}.)
\begin{thm}\label{ineq}
For a geodesic ${\rm CAT}(\kappa)$ space $(X, d)$, we have 
\[
D_{\kappa} \leq m_{X} \leq l_{X}.
\]
\end{thm}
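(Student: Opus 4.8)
The plan is to prove the two inequalities $D_\kappa \le m_X$ and $m_X \le l_X$ separately, the second one being essentially free given Lemma \ref{lem1}. For the inequality $m_X \le l_X$, recall that a locally geodesic path $\gamma$ which is not a geodesic has length $\ge l_X$; I would show how to extract from such a $\gamma$ (or from a path only slightly longer than $l_X$) a $4$-cut whose length is controlled by $|\gamma|$. Concretely, take $\gamma$ locally geodesic but not geodesic, and as in the proof of Lemma \ref{lem1} split it at a point $\gamma(a)$ where it ``turns'': so $\gamma|_{[0,a]}$ and $\gamma|_{[a,b]}$ are each geodesic but $\gamma|_{[0,b]}$ is not. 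Choosing interior points $x_1 \in \gamma((0,a))$ and $x_2 \in \gamma((a,b))$ together with $x_0 = \gamma(0)$ and $x_3 = \gamma(b)$, one gets a proper $4$-chain with $x_0 \prec x_1 \prec \gamma(a) \prec x_2 \prec x_3$ along $\gamma$; after merging $x_1$ with $\gamma(a)$ (or with the help of Definition \ref{4cut}) this should directly be a $4$-cut of length $\le |\gamma|$ once one checks $d(x_0,x_3) < |x|$, which is exactly the failure of $\gamma|_{[0,b]}$ to be geodesic. Taking the infimum over all such $\gamma$ gives $m_X \le l_X$. (Strictly: since Lemma \ref{lem1} already gives $D_\kappa \le l_X$, and the $4$-cut produced lives inside a $\kappa$-small triangle, one should be slightly careful that the extracted $4$-cut genuinely has length approaching $l_X$; a limiting argument over near-minimal $\gamma$ handles this.)

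The real content is $D_\kappa \le m_X$: I must show that in a ${\rm CAT}(\kappa)$ space no $4$-cut can have length $< D_\kappa$. Suppose for contradiction that $(x_0,x_1,x_2,x_3)$ is a $4$-cut with $|x| = d(x_0,x_1)+d(x_1,x_2)+d(x_2,x_3) < D_\kappa$. By the definition of $4$-cut, $x_1$ is smooth between $x_0$ and $x_2$, and $x_2$ is smooth between $x_1$ and $x_3$, so the concatenation $[x_0,x_1]\cup[x_1,x_2]\cup[x_2,x_3]$ is a local geodesic of length $|x|$, while $d(x_0,x_3) < |x|$. The idea is to apply the angle comparison machinery: form the geodesic triangle $\Delta_{x_0 x_1 x_3}$, which is $\kappa$-small because its perimeter is at most $d(x_0,x_1) + d(x_1,x_3) + d(x_0,x_3) \le 2|x| < 2D_\kappa$ (using $d(x_1,x_3) \le d(x_1,x_2)+d(x_2,x_3) \le |x|$ and the triangle inequality for $d(x_0,x_3)$). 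Since $x_1 \prec x_2 \prec x_3$, the segment $[x_1,x_3]$ has $x_2$ smooth on it, hence the angle at $x_1$ between $[x_1,x_0]$ and $[x_1,x_3]$ equals the angle between $[x_1,x_0]$ and $[x_1,x_2]$, which is $\pi$ because $x_0 \prec x_1 \prec x_2$. Now invoke Alexandrov's lemma (Proposition \ref{alex}) or, more directly, the characterization of ${\rm CAT}(\kappa)$ via angles (Proposition \ref{angle}): the angle of the comparison triangle $\tilde\Delta_{x_0 x_1 x_3}$ in $S_\kappa$ at $\tilde x_1$ is at least $\pi$, forcing it to be exactly $\pi$, so $\tilde x_1$ lies on the segment $[\tilde x_0,\tilde x_3]$ and $d(x_0,x_3) = d(\tilde x_0,\tilde x_3) = d(\tilde x_0,\tilde x_1)+d(\tilde x_1,\tilde x_3) = d(x_0,x_1)+d(x_1,x_3) = |x|$, contradicting $d(x_0,x_3) < |x|$.

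The main obstacle I anticipate is the bookkeeping around degenerate comparison triangles: when the angle at $\tilde x_1$ is $\pi$ the comparison triangle degenerates to a segment, and one has to make sure the ${\rm CAT}(\kappa)$ inequality still applies to it and correctly yields $d(x_0,x_3)=|x|$ rather than merely $d(x_0,x_3)\le|x|$ — this is where the strictness in the definition of $4$-cut must be played against the equality forced by the degenerate comparison. A clean way around this is to argue in two stages: first use Proposition \ref{angle}(ii) to get that the comparison angle at $x_1$ is $\ge \pi$ (hence $=\pi$), and then use the ${\rm CAT}(\kappa)$ length inequality applied to the pair $x_0, x_3$ directly, noting that their comparison points in $\tilde\Delta_{x_0 x_1 x_3}$ are exactly $\tilde x_0$ and $\tilde x_3$, which are distance $d(x_0,x_1)+d(x_1,x_3)$ apart in the degenerate triangle. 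A secondary subtlety is ensuring all triangles invoked really are $\kappa$-small, i.e. that the perimeter estimates never exceed $2D_\kappa$; this follows from $|x| < D_\kappa$ together with repeated use of the triangle inequality and the smoothness relations, but it must be checked explicitly so that the comparison triangles are uniquely determined and the hypotheses of Propositions \ref{alex} and \ref{angle} are met.
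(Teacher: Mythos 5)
Your argument for the first inequality $D_{\kappa}\leq m_{X}$ is correct and in fact takes a cleaner route than the paper. You work with the single triangle $\Delta_{x_{0}x_{1}x_{3}}$, choose the side $[x_{1},x_{3}]$ to pass through $x_{2}$ (possible since $x_{1}\prec x_{2}\prec x_{3}$), observe that the angle at $x_{1}$ is $\pi$ because $[x_{1},x_{0}]$ and $[x_{1},x_{2}]$ concatenate to a geodesic, and then the angle criterion of Proposition \ref{angle}(ii) forces the comparison angle at $\tilde{x}_{1}$ to be $\pi$, so the comparison triangle degenerates and $d(x_{0},x_{3})=d(x_{0},x_{1})+d(x_{1},x_{3})=|x|$, a contradiction; your $\kappa$-smallness check via $d(x_{1},x_{3})\leq d(x_{1},x_{2})+d(x_{2},x_{3})$ is also right. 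The paper instead applies the ${\rm CAT}(\kappa)$ length inequality to the two degenerate triangles $\Delta_{x_{0}x_{1}x_{2}}$ and $\Delta_{x_{1}x_{2}x_{3}}$ to identify $\gamma_{01}\cup\gamma_{12}$ with $\gamma_{02}$ and $\gamma_{12}\cup\gamma_{23}$ with $\gamma_{13}$, concludes that $\gamma_{02}\cup\gamma_{23}$ is locally geodesic, and then invokes Lemma \ref{lem1} ($D_{\kappa}\leq l_{X}$) to promote it to a geodesic. Your version avoids that appeal to Lemma \ref{lem1} entirely.

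The second inequality $m_{X}\leq l_{X}$ is where your sketch has a genuine gap: the chain you extract need not be a $4$-cut. With $x_{0}=\gamma(0)$, $x_{1}\in\gamma((0,a))$, $x_{2}\in\gamma((a,b))$, $x_{3}=\gamma(b)$, the frame condition $\varphi(x)=(x_{0},x_{3})$ requires $x_{1}$ to be smooth between $x_{0}$ and $x_{2}$, i.e.\ $d(x_{0},x_{2})=d(x_{0},x_{1})+d(x_{1},x_{2})$, which means $x_{0},x_{1},x_{2}$ lie on a common geodesic; but the arc of $\gamma$ from $x_{0}$ to $x_{2}$ crosses the corner at $\gamma(a)$ and is in general not a geodesic, so this smoothness fails. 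The proposed ``merging $x_{1}$ with $\gamma(a)$'' fails for the same reason: $\gamma(a)$ is not smooth between $\gamma(0)$ and any $\gamma(t)$ with $t>a$, by the very definition of $a$. Likewise, $d(x_{0},x_{3})<|x|$ is not ``exactly'' the failure of $\gamma|_{[0,b]}$ to be geodesic, since $|x|$ need not equal the arc length $b$ if $d(x_{1},x_{2})$ is shorter than the arc between them. The fix, which is the paper's key device, is to place both middle points in the overlap of two geodesic subarcs of $\gamma$: choose a small $\epsilon>0$ and let $b$ be the supremum of $t$ such that $\gamma|_{[a-\epsilon,t]}$ is a geodesic, then use the chain $(\gamma(0),\gamma(a-\epsilon),\gamma(a),\gamma(b))$. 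Its first three points lie on the geodesic $\gamma|_{[0,a]}$ and its last three on the geodesic $\gamma|_{[a-\epsilon,b]}$, so both smoothness conditions hold and $|x|$ equals the arc length $b$; then either $d(\gamma(0),\gamma(b))<b$, giving a genuine $4$-cut of length at most $|\gamma|$, or $\gamma|_{[0,b]}$ is a geodesic, contradicting the choice of $a$. With that replacement your overall strategy for the second inequality goes through.
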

\begin{proof}
We first show the former inequality. Assume $m_{X} < D_{\kappa}$. Then there exists a 4-cut $x = (x_{0}, x_{1}, x_{2}, x_{3})$ in $X$ with $m_{X} \leq |x| < D_{\kappa}$. Let $\{ \gamma_{ij} \colon [0, d(x_{i}, x_{j})] \to X \mid 0\leq i < j \leq 3, (i, j) \not= (0, 3) \}$ be a family of geodesics between these points, and $\Delta_{x_{0} x_{1} x_{2}}$ and $\Delta_{x_{1} x_{2} x_{3}}$ be its geodesic triangles. (See Figure \ref{fig1.5}.) Because $|x|$ is smaller than $D_{\kappa}$, the geodesic triangles $\Delta_{x_{0} x_{1} x_{2}}$ and $\Delta_{x_{1} x_{2} x_{3}}$ are both $\kappa$-small. Note that the comparison triangles $\tilde{\Delta}_{x_{0} x_{1} x_{2}}$ and $\tilde{\Delta}_{x_{1} x_{2} x_{3}}$ are both degenerated because they are on some semi-spheres of $S_{\kappa}$. Hecne by the ${\rm CAT}(\kappa)$ inequality, the unions of geodesics $\gamma_{01}\cup \gamma_{12}$ and $\gamma_{12}\cup \gamma_{23}$ coincide with $\gamma_{02}$ and $\gamma_{13}$ respectively. Therefore the map $\gamma_{02}\cup \gamma_{23}$ can be defined and is locally geodesic, which is actually a geodesic because of the inequality $|x| < D_{\kappa} \leq l_{X}$ as shown in Lemma \ref{lem1}. Then we have $d(x_{0}, x_{3}) = d(x_{0}, x_{1}) + d(x_{1}, x_{2}) + d(x_{2}, x_{3})$, which contradicts that the chain $x$ is a 4-cut. Thus we obtain $m_{X} \geq D_{\kappa}$.

Next we show the latter inequality. Assume $l_{X} < m_{X}$. Then there exists a locally geodesic map $\gamma \colon [0, c] \to X$ which is not a geodesic and is satisfying $l_{X} \leq |\gamma| <m_{X}$. Take $a > 0 $ as the supremum of the number $t$ such that $\gamma|_{[0, t]}$ is a geodesic. Let $\epsilon$ be a sufficiently small positive number. Let $b > a$ be the supremum of the number $t$ such that $\gamma|_{[a-\epsilon, t]}$ is a geodesic. (See Figure \ref{fig2}.) If $\big|\gamma|_{[0, b]}\big|$ is smaller than $l_{X}$, then $\gamma|_{[0, b]}$ is a geodesic, which contradicts the assumption $a<c$. Hence we have $\big|\gamma|_{[0, b]}\big| \geq l_{X}$. Note that the proper chain $(\gamma(0), \gamma(a-\epsilon), \gamma(a), \gamma(b))$ has no singular points other than end points, but is not a 4-cut because we have $\big|\gamma|_{[0, b]}\big| \leq |\gamma| < m_{X}$. Similarly, neither is $(\gamma(t_{1}), \gamma(a-\epsilon), \gamma(a), \gamma(t_{2}))$ for $0\leq t_{1} \leq a - \epsilon$ and $a \leq t_{2} \leq b$ . Hence we have  $d(\gamma(t_{2}), \gamma(t_{1})) = (a- \epsilon - t_{1}) + (a - (a - \epsilon)) + (t_{2} - a) = t_{2} - t_{1}$, which implies that $\gamma|_{[0, b]}$ is a geodesic. Thus we obtain a contradiction and conclude $l_{X} \geq m_{X}$.
\end{proof}
\begin{figure}
\centering
        \def \svgwidth{60mm}
        \includegraphics{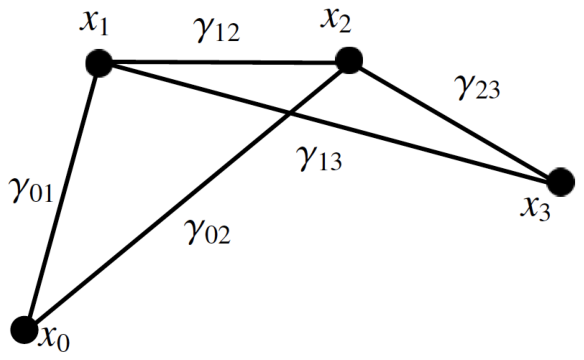}
\caption{Description for the proof of Theorem \ref{ineq}}
\label{fig1.5}
\centering
        \def \svgwidth{60mm}
        \includegraphics{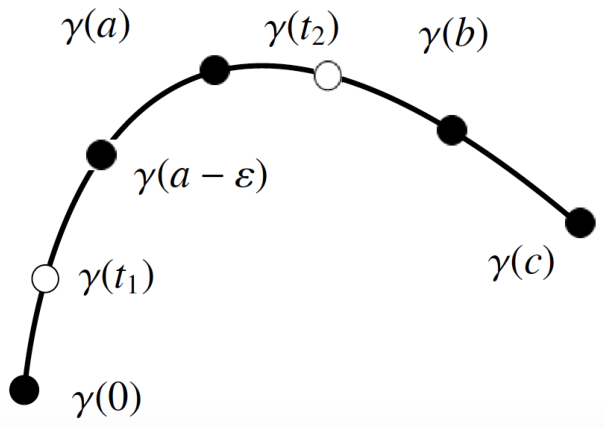}
\caption{Description for the proof of Theorem \ref{ineq}}
\label{fig2}
\end{figure}
\begin{cor}\label{vanish}
Let $(X, d)$ be a geodesic ${\rm CAT}(\kappa)$ space. Then for every $n > 0$ and $0 < l < D_{\kappa}$, the magnitude homology $MH^{l}_{n}(X)$ vanishes.
\end{cor}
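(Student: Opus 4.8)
The plan is to feed the inequality of Theorem \ref{ineq} into the structural results of \cite{YK} recalled in Section 3, reducing the statement to a computation of reduced homology of order complexes of interval posets, which then collapses because of the uniqueness of geodesics in a ${\rm CAT}(\kappa)$ space.

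First, since $0 < l < D_{\kappa}$ and $D_{\kappa} \leq m_{X}$ by Theorem \ref{ineq}, we are in the range $0 < l < m_{X}$, so Theorem \ref{yk1}(2) gives $MH^{l}_{n}(X) \cong MH^{{\rm simp},l}_{n}(X)$ for $n > 0$, and Theorem \ref{yk1}(1) further identifies this with $\bigoplus_{F \in P^{l}_{\leq n}(X)} MH^{F}_{n}(X)$. Thus it suffices to show $MH^{F}_{n}(X) = 0$ for every $n > 0$ and every proper chain $F = (x_{0}, \dots, x_{m})$ of length $l$. Since $l > 0$ we have $m \geq 1$, and each $d(x_{i}, x_{i+1})$ is a strictly positive summand of $|F| = l$, so $0 < d(x_{i}, x_{i+1}) \leq l < D_{\kappa}$ for $0 \leq i \leq m-1$.

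Next I would analyse each interval poset $I(x_{i}, x_{i+1})$ appearing in Theorem \ref{yk2}. In a ${\rm CAT}(\kappa)$ space any two points at distance $< D_{\kappa}$ are joined by a \emph{unique} geodesic (see \cite{BH}); since $X$ is geodesic, a point $c$ smooth between $x_{i}$ and $x_{i+1}$ produces a path $[x_{i},c] \cup [c,x_{i+1}]$ of length $d(x_{i},x_{i+1})$, hence a geodesic, which by uniqueness must be $[x_{i}, x_{i+1}]$, so $c$ lies on its image. Therefore $I(x_{i}, x_{i+1})$ is (order-isomorphic to a subinterval of) the image of $[x_{i}, x_{i+1}]$ ordered by arc length; it is nonempty because $d(x_{i}, x_{i+1}) > 0$, and it is totally ordered. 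Consequently the order complex $\Delta(I(x_{i}, x_{i+1}))$ is contractible, so its reduced chain complex $C_{*}(\Delta_{i,i+1})$ is acyclic.

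Finally, each $C_{*}(\Delta_{i,i+1})$ is a bounded-below complex of free abelian groups that is acyclic, hence contractible (chain homotopy equivalent to the zero complex); therefore so is the tensor product $C_{*}(\Delta_{0,1}) \otimes \cdots \otimes C_{*}(\Delta_{m-1,m})$, and all of its homology vanishes. By Theorem \ref{yk2}, $MH^{F}_{n}(X) \cong H_{n-2m}\bigl(C_{*}(\Delta_{0,1}) \otimes \cdots \otimes C_{*}(\Delta_{m-1,m})\bigr) = 0$, which completes the argument. I do not expect a serious obstacle: the one genuinely geometric ingredient is the uniqueness of geodesics below distance $D_{\kappa}$ — this is precisely where the hypothesis $l < D_{\kappa}$ (rather than merely $l < m_{X}$) is used — and the only point requiring mild care is that passing acyclicity through the tensor product over $\mathbb{Z}$ is legitimate because the complexes involved are free and bounded below.
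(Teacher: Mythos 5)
Your proposal is correct and follows essentially the same route as the paper: reduce via Theorem \ref{ineq} and Theorem \ref{yk1} to the frame-wise groups $MH^{F}_{n}(X)$, then show each interval poset $I(x_{i}, x_{i+1})$ is totally ordered because a smooth point must lie on the unique geodesic joining two points at distance $< D_{\kappa}$, so Theorem \ref{yk2} gives vanishing. The only (immaterial) difference is that the paper re-derives the containment of smooth points in the geodesic from the degenerate comparison triangle rather than citing the standard uniqueness fact, and your added care about nonemptiness of the posets and the tensor-product/acyclicity step only makes the argument more complete.
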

\begin{proof}
By Theorem \ref{yk1} $(3)$ and Theorem \ref{ineq}, we have $MH^{l}_{n}(X) \cong \bigoplus _{|F| = l} MH^{F}_{n}(X)$ for $0< l < D_{\kappa}$. We show that every interval poset $I(x, y)$ is totally ordered for $d(x,y) < D_{\kappa}$, which implies that $MH^{F}_{n}(X) = 0$ for $|F| < D_{\kappa}$ by Theorem \ref{yk2}. Let $a$ be a smooth point between $x$ and $y$. Let $\overline{xy}, \overline{xa}$ and $\overline{ay}$ be geodesics connecting each pair of points. Then the geodesic triangle $\Delta_{xay}$ is $\kappa$-small since $d(x, y) < D_{\kappa}$, hence the point $a$ is on $\overline{xy}$. Thus we conclude that $\overline{xy}$ is the unique geodesic connecting $x$ and $y$, and the interval poset $I(x, y)$ is precisely equal to $\overline{xy}$ which is totally ordered.
\end{proof}
\begin{ex}
For the circle $C_{1/\sqrt{\kappa}}$ of radius $1/\sqrt{\kappa}$ and the  $n$-sphere $\mathbb{S}^{n}_{\kappa}$of radius $1/\sqrt{\kappa}$, we have 
\[
MH^{l}_{n}(C_{1/\sqrt{\kappa}}) = MH^{l}_{n}(\mathbb{S}^{n}) = 0,
\]
for $0< l < \pi/\sqrt{\kappa}$ and $n > 0$.
\end{ex}
\begin{ex}
For the $n$-dimensional Euclidean space $\mathbb{E}^{n}$, the $n$-dimensional hyperbolic space $\mathbb{H}^{n}$, and every tree $T$, the magnitude homology $MH^{l}_{n}(\mathbb{E}^{n}), MH^{l}_{n}(\mathbb{H}^{n}),$ and $MH^{l}_{n}(T)$ vanishes for all $l > 0$ and $n > 0$.
\end{ex}
\section{Closed geodesics represent non-trivial Magnitude homology classes}
In this section, we show that an existence of closed geodesic in a metric space $X$ guarantees the non-triviality of the second magnitude homology $MH^{*}_{2}(X)$. As a corollary, we give a criterion of being ${\rm CAT}(\kappa)$ for for a cocompact proper geodesic metric space $X$ of curvature $\leq \kappa$ from a viewpoint of the second magnitude homology. We begin by clarifying the definition of the closed geodesic. 
\begin{df}\label{closed}
Let $X$ be a metric space, and let $C_{r}$ be the circle of radius $r$. An isometry $\rho \colon C_{r} \to X$ is called a {\it closed geodesic of radius} $r$ (or {\it of length} $2\pi r$ ) in $X$. 
\end{df}
\begin{prp}\label{comp}
Let $(X, d)$ be a metric space, and let $\rho \colon C_{r} \to X$ be a closed geodesic. Let $0, 1 \in C_{r}$ be a pair of  antipodal points. Then the interval poset $I(\rho(0), \rho(1))$ has at least two connected components.
\end{prp}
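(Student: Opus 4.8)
The plan is to exhibit two disjoint nonempty subsets of $I(\rho(0),\rho(1))$ that sit in different connected components; since the poset is then in particular nonempty, this yields the claim. As $\rho$ is an isometry onto its image and $0,1$ are antipodal on $C_r$, we have $d(\rho(0),\rho(1))=\pi r$. Writing $C_r=A_{+}\cup A_{-}$ for the two semicircular arcs joining $0$ to $1$, each $A_{\pm}$ parametrized by arc length is a geodesic of $C_r$, so $\gamma_{\pm}:=\rho|_{A_{\pm}}$ is a geodesic in $X$ from $\rho(0)$ to $\rho(1)$, and $\gamma_{+}$ and $\gamma_{-}$ meet only at the two endpoints since $\rho$ is injective. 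Every interior point of $\gamma_{\pm}$ is a smooth point strictly between $\rho(0)$ and $\rho(1)$, so the open arcs $\gamma_{+}^{\circ}$ and $\gamma_{-}^{\circ}$ are disjoint nonempty subsets of $I(\rho(0),\rho(1))$; I will show they lie in distinct components. I will also use the elementary observation that if $x\le y$ in an interval poset $I(a,b)$ then $d(a,x)+d(x,y)+d(y,b)=d(a,b)$, so $x$ and $y$ lie on a common geodesic from $a$ to $b$ and $d(x,y)=d(a,y)-d(a,x)$; in particular an edge of the comparability graph of $I(a,b)$ can only join two points lying on a single geodesic from $a$ to $b$, and a whole chain in $I(a,b)$ sits on one such geodesic.

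\emph{First step: no point of $\gamma_{+}^{\circ}$ is comparable to a point of $\gamma_{-}^{\circ}$.} Parametrize $C_r$ by arc length with $\rho(0),\rho(1)$ at parameters $0$ and $\pi r$, so $A_{+}$ corresponds to $(0,\pi r)$ and $A_{-}$ to $(\pi r,2\pi r)$, and recall $d_{C_r}(s,s')=\min(|s-s'|,\,2\pi r-|s-s'|)$. If $\rho(s)\le\rho(s')$ (or the reverse) with $s\in(0,\pi r)$ and $s'\in(\pi r,2\pi r)$, the collinearity identity above forces $d(\rho(s),\rho(s'))$ to equal a definite signed combination of $s$, $s'$ and $\pi r$; substituting the circle distance and using $s\in(0,\pi r)$, $s'\in(\pi r,2\pi r)$ one gets a strict inequality in each case, so no such chain is metrically collinear. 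Thus $\gamma_{+}^{\circ}$ and $\gamma_{-}^{\circ}$ are pairwise non-comparable.

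\emph{Second step, and the main obstacle: ruling out indirect links.} Pairwise non-comparability of the two arcs is \emph{not} by itself enough, since a priori there could be a finite chain $z_{0},z_{1},\dots,z_{k}$ in $I(\rho(0),\rho(1))$ with consecutive terms comparable, $z_{0}\in\gamma_{+}^{\circ}$ and $z_{k}\in\gamma_{-}^{\circ}$, whose intermediate terms $z_{i}$ need not lie on $\rho(C_r)$; one must exclude such a ``bridge.'' This is the delicate point of the proof. The tool I would deploy is that, $\rho$ being an isometry, the function $t\mapsto d(z,\rho(t))$ is $1$-Lipschitz on all of $C_r$ for every $z\in X$ and satisfies $d(z,\rho(t))+d(z,\rho(t+\pi r))\ge\pi r$ for all $t$, with equality at $t=0$ when $z\in I(\rho(0),\rho(1))$; moreover for $z\in\gamma_{+}^{\circ}$ this function takes, at each $t$, the \emph{smallest} value $|d(z,\rho(0))-d_{C_r}(0,t)|$ permitted by the triangle inequality, whereas for $z\in\gamma_{-}^{\circ}$ it takes the \emph{largest} admissible value $d(z,\rho(0))+d_{C_r}(0,t)$. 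One then wants to show this ``extremal'' position relative to the ambient circle cannot be bridged across a single comparability — for instance by tracking $d(z,\rho(\pi r/2))-d(z,\rho(-\pi r/2))$, which is strictly negative on $\gamma_{+}^{\circ}$ and strictly positive on $\gamma_{-}^{\circ}$, and arguing it cannot change sign along a comparability — so that any chain is confined to one side. I expect this propagation argument, exploiting the $1$-Lipschitz constraint coming from the isometric copy of the full circle, to be the real content of the proof.
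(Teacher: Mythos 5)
Your ``first step'' is, in substance, the \emph{entire} proof the paper gives: the paper takes $x$ on one open semicircle and $y$ on the other, supposes $\rho(x)\le\rho(y)$, derives $x=0$ or $y=1$, and then concludes at once that $\Delta(I(\rho(0),\rho(1)))$ is disconnected. So the obstacle you isolate in your second step --- that $I(\rho(0),\rho(1))$ may contain points of $X$ not lying on $\rho(C_r)$, and that mutual incomparability of the two open arcs does not by itself exclude a bridge through such points --- is a gap in the paper's own argument as well; the paper does not address it. You were right to worry about it.

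Unfortunately the propagation argument you sketch cannot be completed, because bridges genuinely occur. Write $L=\pi r$ and let $f\colon[0,2L]\to\mathbb{R}$ be the piecewise linear function of slope $\pm1$ in the arc-length parameter obtained by linear interpolation of the values
\[
f(0)=f(L/2)=f(L)=f(3L/2)=L/2,\quad f(L/4)=f(7L/4)=L/4,\quad f(3L/4)=f(5L/4)=3L/4 .
\]
Since $f$ has slope $\pm1$ everywhere it is $1$-Lipschitz for $d_{C_r}$, and a direct check gives $f(\theta)+f(\theta+L)=L$ for all $\theta$; combining the two, $f(u)+f(v)\ge f(u)+f(\bar u)-d_{C_r}(\bar u,v)=d_{C_r}(u,v)$ for all $u,v$. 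Hence $X:=C_r\sqcup\{z\}$ with $d(z,\rho(\theta)):=f(\theta)$ is a metric space in which the inclusion $\rho\colon C_r\to X$ is a closed geodesic of radius $r$. Because $f(0)+f(L)=L$, the point $z$ lies in $I(\rho(0),\rho(L))$, and
\[
d(\rho(0),\rho(L/4))+d(\rho(L/4),z)=\tfrac{L}{4}+\tfrac{L}{4}=d(\rho(0),z),
\]
with the same computation at $\theta=7L/4$, so $\rho(L/4)\le z$ and $\rho(7L/4)\le z$ while $\rho(L/4)\in\gamma_{+}^{\circ}$ and $\rho(7L/4)\in\gamma_{-}^{\circ}$. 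Each open arc is a chain, so $\Delta(I(\rho(0),\rho(L)))$ is connected. In particular your candidate invariant $d(\cdot,\rho(L/2))-d(\cdot,\rho(-L/2))$ takes the values $-L/2$ at $\rho(L/4)$, $0$ at $z$, and $+L/2$ at $\rho(7L/4)$: it does change sign across comparabilities, and (relatedly) your claim that points of $\gamma_{+}^{\circ}$ realize the minimal admissible value of $t\mapsto d(\cdot,\rho(t))$ for \emph{every} $t$ fails for $t$ on the opposite arc. The upshot is that the difficulty you flagged is not just delicate but fatal for an arbitrary metric space: the proposition needs extra hypotheses (or a different choice of antipodal pair --- in the example above the pair $\rho(L/2),\rho(3L/2)$ does have a disconnected interval), and the paper's proof, which silently assumes $I(\rho(0),\rho(1))\subset\rho(C_r)$, does not establish it as stated.
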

\begin{proof}
Let $U, V$ be semicircles in $C_{r}$ with $ C_{r} = U \cup V$ and $U\cap V = \{ 0, 1\}$. Take points $x \in U-\{0, 1\}$ and $y \in V - \{0, 1\}$, and assume $\rho(x) \leq \rho(y)$ in the poset $I(\rho(0), \rho(1))$. Namely, we have 
\[
d(\rho(0), \rho(y)) = d(\rho(0), \rho(x)) + d(\rho(x), \rho(y)).
\]
Since we have $\rho(0) \prec \rho(x) \prec \rho(1)$ and $\rho(0) \prec \rho(y) \prec \rho(1)$, we obtain
\begin{align*}
d(\rho(x), \rho(1)) & =  d(\rho(0), \rho(1)) - d(\rho(0), \rho(x)) \\
& =  d(\rho(0), \rho(1)) - d(\rho(0), \rho(y)) + d(\rho(x), \rho(y))  \\
& =  d(\rho(y), \rho(1)) + d(\rho(x), \rho(y)) \\
\end{align*}
We also have either 
\[
d(\rho(x), \rho(y)) = d(\rho(x), \rho(1)) + d(\rho(1), \rho(y))
\]
or 
\[
d(\rho(x), \rho(y)) = d(\rho(x), \rho(0)) + d(\rho(0), \rho(y)).
\]
Hence we obtain 
\[
d(\rho(y), \rho(1)) = 0,
\]
or 
\[
d(\rho(x), \rho(1)) - d(\rho(x), \rho(0)) = \pi r,
\]
respectively. Each case implies $y  = 1$ or $x = 0$, which is a contradiction. Hence any pair of points in $\rho(U-\{0, 1\})$ and $\rho(V-\{0, 1\})$ are uncomparable, which implies the order complex $\Delta(I(\rho(0), \rho(1)))$ is not connected.
\end{proof}
\begin{thm}\label{clogeo}
Let $X$ be a metric space. If there exists a closed geodesic of radius $r$ in $X$, then we have $MH^{\pi r}_{2}(X) \not = 0$.
\end{thm}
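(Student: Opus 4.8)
The plan is to produce an explicit nonzero class in $MH^{\pi r}_2(X)$ from an antipodal pair on the given closed geodesic, by feeding Proposition \ref{comp} into the computational machinery of \cite{YK} recalled as Theorems \ref{yk1} and \ref{yk2}. Concretely, let $\rho \colon C_r \to X$ be the closed geodesic and fix a pair of antipodal points $0, 1 \in C_r$. Since $\rho$ is an isometry and antipodal points of $C_r$ lie at distance $\pi r$ (half the circumference $2\pi r$) in the geodesic metric, the pair $F := (\rho(0), \rho(1))$ is a proper $1$-chain — in particular $\rho(0) \neq \rho(1)$ — with $|F| = \pi r$.

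Next I would apply Theorem \ref{yk2} to this proper $1$-chain, in the case $m = 1$, $n = 2$, obtaining
\[
MH^F_2(X) \cong H_0\bigl(C_*(\Delta(I(\rho(0), \rho(1))))\bigr),
\]
the reduced degree-zero homology of the order complex of the interval poset $I(\rho(0), \rho(1))$. By Proposition \ref{comp} this order complex has at least two connected components, so its reduced $H_0$ is a nonzero (free abelian) group, whence $MH^F_2(X) \neq 0$. Finally, Theorem \ref{yk1}(3) supplies an injection $MH^F_2(X) \hookrightarrow MH^{|F|}_2(X) = MH^{\pi r}_2(X)$, and the theorem follows.

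I do not expect a serious obstacle: the geometric heart of the argument — that the interval poset between antipodal points of a closed geodesic is disconnected — has already been isolated as Proposition \ref{comp}, and what remains is a formal assembly. The only points demanding a moment's care are the bookkeeping identity $|F| = \pi r$ and the verification that the cited results apply here without restriction; note in particular that neither Theorem \ref{yk2} nor part (3) of Theorem \ref{yk1} carries a length hypothesis, so no assumption involving $m_X$ (and no ${\rm CAT}(\kappa)$-type condition) is needed — the existence of the closed geodesic alone suffices.
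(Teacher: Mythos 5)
Your proposal is correct and follows essentially the same route as the paper's own proof: take the antipodal pair $F=(\rho(0),\rho(1))$ of length $\pi r$, identify $MH^{F}_{2}(X)$ with the reduced $H_{0}$ of the order complex of $I(\rho(0),\rho(1))$ via Theorem \ref{yk2}, invoke Proposition \ref{comp} for disconnectedness, and conclude with the injection of Theorem \ref{yk1}(3). No gaps.
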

\begin{proof}
Let $\rho$ be a closed geodesic of radius $r$, and let $\rho(0)$ and $\rho(1)$ be antipodal points of it. Then for the proper 1-chain $F = (\rho(0), \rho(1))$, there exist an injection $MH^{F}_{2}(X) \to MH^{|F|}_{2}(X)$ by Theorem \ref{yk1} $(3)$. Furthermore, we have $MH^{F}_{2}(X) \cong H_{0}(C_{*}(\Delta(I(\rho(0), \rho(1)))))$ by Theorem \ref{yk2}, and Proposition \ref{comp} implies this is non-zero. Hence the statement follows.
\end{proof}
\begin{cor}\label{criterion}
Let $X$ be a cocompact proper geodesic metric space of curvature $\leq \kappa$. Then the following are equivalent.
\begin{itemize}
\item[(i)] $X$ fails to be ${\rm CAT}(\kappa)$.  
\item[(ii)] there exist a closed geodesic of length $< 2D_{\kappa}$. 
\item[(iii)] $MH^{< D_{\kappa}}_{2}(X) \not= 0$.
\end{itemize}
\end{cor}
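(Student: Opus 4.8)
The plan is to obtain the corollary by assembling results already established, since no new geometric input is needed: the three conditions are linked by Proposition~\ref{gromov}, Theorem~\ref{clogeo}, and Corollary~\ref{vanish}. Concretely, I would prove the equivalence (i)$\Leftrightarrow$(ii), then the two implications (ii)$\Rightarrow$(iii) and (iii)$\Rightarrow$(i), which together close the loop.

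First, (i)$\Leftrightarrow$(ii) is literally the first assertion of Proposition~\ref{gromov}: a cocompact proper geodesic metric space of curvature $\leq\kappa$ fails to be ${\rm CAT}(\kappa)$ exactly when it contains a closed geodesic of length $<2D_\kappa$. Next, for (ii)$\Rightarrow$(iii): a closed geodesic of length $<2D_\kappa$ is, by Definition~\ref{closed}, an isometry $\rho\colon C_r\to X$ with $2\pi r<2D_\kappa$, i.e.\ $\pi r<D_\kappa$; Theorem~\ref{clogeo} then gives $MH^{\pi r}_2(X)\neq0$, so $MH^{<D_\kappa}_2(X)\neq0$. (When $\kappa\leq0$ one has $D_\kappa=\infty$ and only the existence of the nonzero class matters.) For (iii)$\Rightarrow$(i) I would argue by contraposition: if $X$ is ${\rm CAT}(\kappa)$ then, $X$ being geodesic, Corollary~\ref{vanish} applies with $n=2$ and yields $MH^l_2(X)=0$ for all $0<l<D_\kappa$; moreover $MH^0_2(X)=0$ since there is no proper $2$-chain of length $0$ (the two consecutive distances would vanish while the points are distinct). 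Hence $MH^{<D_\kappa}_2(X)=0$, contradicting (iii). Chaining (i)$\Leftrightarrow$(ii), (ii)$\Rightarrow$(iii) and (iii)$\Rightarrow$(i) gives all the asserted equivalences.

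There is no substantive obstacle here; the only thing requiring a moment's attention is bookkeeping, namely reconciling the ``length $<2D_\kappa$'' appearing in Proposition~\ref{gromov} and Definition~\ref{closed} with the ``$\pi r$'' grading in Theorem~\ref{clogeo}, and noting that the notation $MH^{<D_\kappa}_2(X)$ harmlessly includes the trivial group in degree $l=0$. All the real work has been done in Sections~4 and~5.
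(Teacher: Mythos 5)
Your proposal is correct and matches the paper's own proof, which establishes the same cycle (i)$\Rightarrow$(ii) via Proposition~\ref{gromov}, (ii)$\Rightarrow$(iii) via Theorem~\ref{clogeo}, and (iii)$\Rightarrow$(i) via Corollary~\ref{vanish}. Your extra bookkeeping about $\pi r < D_\kappa$ and the $l=0$ case is fine but not needed beyond what the paper records.
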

\begin{proof}
By Proposition \ref{gromov}, (i) implies (ii). By Theorem \ref{clogeo}, (ii) implies (iii). By Corollary \ref{vanish}, (iii) implies (i).
\end{proof}
In particular, the equivalence of (i) and (iii) in Corollary \ref{criterion} gives a criterion of being ${\rm CAT}(\kappa)$ for a cocompact proper geodesic metric space ( especially for compact or homogeneous Riemannian manifold ) whose curvature is bounded from above.
\section{Magnitude homology of non-${\rm CAT}(\kappa)$ metric spaces of curvature $\leq \kappa$}
In this section, we study the magnitude homology of proper geodesic metric spaces by using the injectivity radi and the systoles. As a corollary, we obtain a vanishing theorem for magnitude homology of cocompact proper geodesic metric spaces, and give a partial computation of magnitude homology of the projective spaces $\mathbb{RP}^{n}$ with the standard metric.
\begin{prp}\label{sysl}
For a metric space $X$, we have 
\[
2\iota_{X} \leq {\rm Sys}(X),
\]
and
\[
2l_{X} \leq {\rm Sys}(X).
\]
\end{prp}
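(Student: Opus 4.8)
The plan is to deduce both inequalities from a single claim about an arbitrary closed geodesic: if $\rho\colon C_{r}\to X$ is a closed geodesic of radius $r$ (hence of length $2\pi r$), then $\iota_{X}\leq \pi r$ and $l_{X}\leq \pi r$. Granting this, one takes the infimum over all closed geodesics of $X$; since ${\rm Sys}(X)$ is by definition the infimum of their lengths $2\pi r$, this immediately yields $2\iota_{X}\leq {\rm Sys}(X)$ and $2l_{X}\leq {\rm Sys}(X)$ (and if $X$ has no closed geodesic there is nothing to prove). The single fact I would use throughout is that, $\rho$ being an isometry onto its image, $d(\rho(p),\rho(q))$ equals the length of the shorter arc of $C_{r}$ between $p$ and $q$; consequently a sub-arc of $C_{r}$ of length $\leq \pi r$, parametrized by arc length, maps to a genuine geodesic of $X$, whereas a strictly longer arc does not map to a geodesic between its endpoints.

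For $\iota_{X}\leq \pi r$, I would argue much as in the proof of Proposition \ref{comp}: pick antipodal points $0,1\in C_{r}$, so that $\rho(0)$ and $\rho(1)$ lie at distance exactly $\pi r$ and are joined by two distinct geodesics of length $\pi r$, namely the images under $\rho$ of the two semicircular arcs. If $\iota_{X}>\pi r$ held, the definition of the injectivity radius would supply some $r'>\pi r$ for which every pair of points at distance $<r'$ is joined by a unique geodesic; applying this to $\rho(0),\rho(1)$, which are at distance $\pi r<r'$, contradicts the non-uniqueness just produced. Hence $\iota_{X}\leq \pi r$.

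For $l_{X}\leq \pi r$, the point to watch is that the semicircular arc is itself an honest geodesic, so it does not exhibit a locally geodesic path that fails to be a geodesic; the remedy is to overshoot it slightly. For each small $\epsilon>0$ let $\gamma\colon[0,\pi r+\epsilon]\to X$ trace, at unit speed, an arc of $C_{r}$ of length $\pi r+\epsilon$. Every sufficiently short subinterval (one-sided at the two endpoints) maps to an arc of length $<\pi r$, which is a geodesic, so $\gamma$ is locally geodesic; but the two arcs of $C_{r}$ cut off by the endpoints of $\gamma$ have lengths $\pi r+\epsilon$ and $\pi r-\epsilon$, so $d(\gamma(0),\gamma(\pi r+\epsilon))=\pi r-\epsilon<\pi r+\epsilon=|\gamma|$, and $\gamma$ is not a geodesic. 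Therefore $l_{X}\leq \pi r+\epsilon$ for every $\epsilon>0$, so $l_{X}\leq \pi r$. The main obstacle is really just this bit of care — handling the strict-versus-non-strict inequality in the $\iota_{X}$ part, and remembering to pass strictly beyond the semicircle in the $l_{X}$ part; no genuinely geometric difficulty arises.
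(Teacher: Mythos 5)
Your proof is correct and is essentially the paper's argument: the $\iota_X$ bound comes from the two distinct semicircular geodesics between antipodal points (the paper dismisses this as ``immediate''), and the $l_X$ bound comes from overshooting a semicircle by $\epsilon$ to get a locally geodesic path of length $\pi r+\epsilon$ that is not a geodesic, exactly as in the paper (which phrases the same computation as a proof by contradiction with a small $\delta$). Your direct formulation, taking the infimum over closed geodesics at the end, is a clean equivalent rewrite, and your reading of the no-closed-geodesic case as vacuous is the only one under which the statement is true.
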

\begin{proof}
The former inequality is immediate. For the latter, suppose ${\rm Sys}(X) < 2l_{X}$. Then there exist a closed geodesic $c \colon C_{r} \to X$ of length $2\pi r < 2l_{X} - 2\delta$ for small $\delta > 0$. Then the restriction of $c$ on the interval $[0, \pi r+ \delta]$ is locally geodesic but not a geodesic, with length $\pi r + \delta < l_{X}$. Hence we obtain a contradiction.
\end{proof}
\begin{figure}
\centering
        \def \svgwidth{60mm}
         \includegraphics{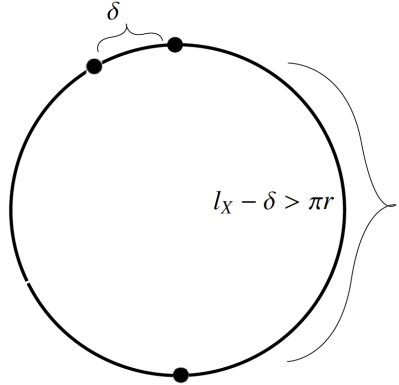}
\caption{Description for the proof of Proposition \ref{sysl}}
\label{fig3}
\end{figure}
\begin{prp}\label{linj}
Let $(X ,d)$ be a proper geodesic metric space of curvature $\leq \kappa$. If the injectivity radius $\iota_{X}$ is no greater than $D_{\kappa}$, then we have 
\[
 \iota_{X} \leq l_{X}.
\]
\end{prp}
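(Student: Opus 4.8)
The plan is to argue by contradiction, along the lines of the proof of Lemma \ref{lem1}, but replacing the ${\rm CAT}(\kappa)$ angle condition with Proposition \ref{smoothin} (which applies because $X$ has curvature $\leq \kappa$) and exploiting the hypotheses $\iota_{X}\leq D_{\kappa}$ and properness of $X$. So suppose $l_{X}<\iota_{X}$. Since $\iota_{X}\leq D_{\kappa}$, there is a locally geodesic path $\gamma\colon[0,c]\to X$ which is not a geodesic and satisfies $l_{X}\leq|\gamma|<\iota_{X}$; in particular $c=|\gamma|<\iota_{X}$. As in Lemma \ref{lem1}, let $a$ be the supremum of those $t$ for which $\gamma|_{[0,t]}$ is a geodesic, and let $b>a$ be such that $\gamma|_{[a,b]}$ is a geodesic with $\gamma(b)\neq\gamma(0)$ (possible because $\gamma$ is locally geodesic near $a$ and $d(\gamma(0),\gamma(b))\to d(\gamma(0),\gamma(a))>0$ as $b\downarrow a$); then $\gamma|_{[0,a]}$ and $\gamma|_{[a,b]}$ are geodesics, $0<a<b\leq c$, and $\gamma|_{[0,b]}$ is not a geodesic because $b>a$. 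Since $\gamma$ is locally geodesic, a neighborhood of $a$ is mapped geodesically, so near $\gamma(a)$ the segments $\gamma|_{[0,a]}$ and $\gamma|_{[a,b]}$ fit together along a single geodesic; hence the angle of the geodesic triangle $\Delta_{\gamma(0),\gamma(a),\gamma(b)}$ at $\gamma(a)$ is $\pi$.

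I would then apply Proposition \ref{smoothin} with $p=\gamma(a)$ and with $q=\delta$, where $\delta$ is a geodesic from $\gamma(0)$ to $\gamma(b)$, of length $L:=d(\gamma(0),\gamma(b))$. Such a $\delta$ is unique because $L\leq b\leq c<\iota_{X}$, and the triangle $\Delta_{\gamma(0),\gamma(a),\gamma(b)}$ is $\kappa$-small because its perimeter is $a+(b-a)+L\leq 2b\leq 2c<2\iota_{X}\leq 2D_{\kappa}$. If $\gamma(a)$ lies on $\delta$, then $L=d(\gamma(0),\gamma(a))+d(\gamma(a),\gamma(b))=b$ and we are already finished as below; so assume $\gamma(a)$ does not lie on $\delta$. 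The only hypothesis of Proposition \ref{smoothin} requiring work is the existence of a continuous family of geodesics from $\gamma(a)$ to the points $\delta(s)$ of $\delta$. For this I would note that, using the triangle inequality twice together with $d(\gamma(a),\gamma(0))=a$ and $d(\gamma(a),\gamma(b))=b-a$, one has for every $s$
\[
d(\gamma(a),\delta(s))\leq\min\bigl(a+s,\ (b-a)+(L-s)\bigr)\leq\tfrac{1}{2}\bigl((a+s)+(b-a)+(L-s)\bigr)=\tfrac{b+L}{2}\leq b<\iota_{X}.
\]
Hence each $\delta(s)$ is joined to $\gamma(a)$ by a unique geodesic, and by properness of $X$ (an Arzel\`a--Ascoli argument) these geodesics vary continuously with $s$; uniqueness also forces the two members of the family at the endpoints of $\delta$ to coincide with the sides $\gamma|_{[0,a]}$ and $\gamma|_{[a,b]}$ of the triangle.

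Proposition \ref{smoothin} now yields that the angle of $\Delta_{\gamma(0),\gamma(a),\gamma(b)}$ at $\gamma(a)$, which is $\pi$, is no greater than the angle at $\tilde{\gamma}(a)$ of any comparison triangle $\tilde{\Delta}_{\gamma(0),\gamma(a),\gamma(b)}$ in $S_{\kappa}$; hence that comparison angle is also $\pi$. A triangle in $S_{\kappa}$ with an angle equal to $\pi$ is degenerate, and since the two sides of $\tilde{\Delta}_{\gamma(0),\gamma(a),\gamma(b)}$ at $\tilde{\gamma}(a)$ have total length $a+(b-a)=b<D_{\kappa}$, this forces $d_{S_{\kappa}}(\tilde{\gamma}(0),\tilde{\gamma}(b))=b$. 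Since the comparison triangle has the same side lengths as the original, $d(\gamma(0),\gamma(b))=b$. But $\gamma|_{[0,b]}$ is parametrized by arc length (being a concatenation of two geodesics) and joins two points at distance $b=|\gamma|_{[0,b]}|$, so $\gamma|_{[0,b]}$ is a geodesic, contradicting $b>a$. Therefore $\iota_{X}\leq l_{X}$.

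I expect the main obstacle to be precisely the verification of the continuous-family hypothesis of Proposition \ref{smoothin}: one must combine the uniform bound $d(\gamma(a),\delta(s))<\iota_{X}$ --- which is exactly where the hypotheses $\iota_{X}\leq D_{\kappa}$ and the choice $|\gamma|<\iota_{X}$ are used --- with properness to pass from pointwise uniqueness of geodesics to continuity of the family, and then check that the endpoint members of the family are indeed the sides of the comparison triangle, so that the angle supplied by Proposition \ref{smoothin} is the relevant one. Everything else is a routine adaptation of the proof of Lemma \ref{lem1}.
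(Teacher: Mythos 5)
Your proof is correct and follows essentially the same route as the paper: apply Proposition \ref{smoothin} to the triangle $\Delta_{\gamma(0)\gamma(a)\gamma(b)}$ with the family of geodesics from $\gamma(a)$ to the points of $\delta$, whose continuity is obtained from uniqueness of short geodesics together with properness and Arzel\`a--Ascoli. The only differences are minor: your explicit estimate $d(\gamma(a),\delta(s))\le\tfrac{b+L}{2}\le b<\iota_X$ fills in a step the paper merely asserts, and your endgame is more direct --- you read off $d(\gamma(0),\gamma(b))=b$ straight from the degenerate comparison triangle, whereas the paper takes a detour through Alexandrov's lemma (Proposition \ref{alex}) and an auxiliary point $p$ on $\delta$.
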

For the proof of Proposition \ref{linj}, we use the Arzel\`{a}-Ascoli theorem of the following form. Recall that a sequecne of maps $\{f_{n} \colon Y \to X \}$ between metric spaces is called {\it equicontinuous} if for any positive number $\epsilon$ there exists a positive number $\delta$ such that $d(f_{n}(a), f_{n}(b)) < \epsilon$ holds for every $n$ and $a, b \in Y$ with $d(a, b) < \delta$.
\begin{lem}\label{aa}
Let $X$ be a compact metric space, and $\{\gamma_{n} \colon [0, 1] \to X\}$ be an equicontinuous sequence of maps. Then there exist a subsequence $\{\gamma_{n_{i}}\}$ which uniformly converges to a continuous map $\gamma \colon [0, 1] \to X$.
\end{lem}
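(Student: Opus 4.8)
The plan is to run the classical diagonal-extraction argument and then upgrade pointwise convergence on a dense set to uniform convergence by an $\epsilon/3$ estimate using equicontinuity.

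First I would fix a countable dense subset $Q = \{q_{1}, q_{2}, \dots\}$ of $[0,1]$. Since $X$ is compact, it is sequentially compact, so $\{\gamma_{n}(q_{1})\}_{n}$ has a convergent subsequence; record the indices to obtain a subsequence $\{\gamma^{(1)}_{n}\}_{n}$ of $\{\gamma_{n}\}_{n}$ with $\gamma^{(1)}_{n}(q_{1})$ convergent. Inductively, given $\{\gamma^{(k)}_{n}\}_{n}$ with $\gamma^{(k)}_{n}(q_{j})$ convergent for all $j \leq k$, extract a further subsequence $\{\gamma^{(k+1)}_{n}\}_{n}$ along which $\gamma^{(k+1)}_{n}(q_{k+1})$ converges as well. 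Setting $\gamma_{n_{i}} := \gamma^{(i)}_{i}$, the sequence $\{\gamma_{n_{i}}\}_{i}$ is, from index $k$ onward, a subsequence of $\{\gamma^{(k)}_{n}\}_{n}$, so $\gamma_{n_{i}}(q_{k})$ converges for every $k$; in particular $\{\gamma_{n_{i}}(q_{k})\}_{i}$ is Cauchy for every $k$.

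Next I would show $\{\gamma_{n_{i}}\}_{i}$ is uniformly Cauchy. Given $\epsilon > 0$, choose $\delta > 0$ by equicontinuity so that $d(\gamma_{n}(a), \gamma_{n}(b)) < \epsilon/3$ for all $n$ whenever $|a - b| < \delta$. Cover $[0,1]$ by finitely many intervals of length $< \delta$ and pick one point $q_{k_{r}} \in Q$ in each, obtaining finitely many points $q_{k_{1}}, \dots, q_{k_{m}}$. For these finitely many points there is an $N$ with $d(\gamma_{n_{i}}(q_{k_{r}}), \gamma_{n_{j}}(q_{k_{r}})) < \epsilon/3$ for all $i, j \geq N$ and all $r$. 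For arbitrary $t \in [0,1]$, choosing $q_{k_{r}}$ in the same small interval as $t$, the triangle inequality gives $d(\gamma_{n_{i}}(t), \gamma_{n_{j}}(t)) \leq d(\gamma_{n_{i}}(t), \gamma_{n_{i}}(q_{k_{r}})) + d(\gamma_{n_{i}}(q_{k_{r}}), \gamma_{n_{j}}(q_{k_{r}})) + d(\gamma_{n_{j}}(q_{k_{r}}), \gamma_{n_{j}}(t)) < \epsilon$ for all $i, j \geq N$, independently of $t$.

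Finally, since $X$ is compact it is complete, so for each $t$ the Cauchy sequence $\{\gamma_{n_{i}}(t)\}_{i}$ converges to a point $\gamma(t) \in X$, and the uniform-Cauchy estimate above yields $\gamma_{n_{i}} \to \gamma$ uniformly. Continuity of $\gamma$ follows since each $\gamma_{n}$ is continuous (being a member of an equicontinuous family) and uniform limits of continuous maps are continuous; alternatively, letting $i \to \infty$ in the equicontinuity estimate gives $d(\gamma(a), \gamma(b)) \leq \epsilon/3$ whenever $|a - b| < \delta$. There is no essential obstacle here: the only points needing care are the bookkeeping that $\{\gamma_{n_{i}}\}_{i}$ is eventually a subsequence of each $\{\gamma^{(k)}_{n}\}_{n}$, and keeping all three $\epsilon/3$ contributions genuinely uniform in $n$ — the outer two come from equicontinuity, which is uniform in $n$ by hypothesis, and the middle one is uniform over the finite set $q_{k_{1}}, \dots, q_{k_{m}}$.
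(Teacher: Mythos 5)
Your proof is correct: it is the standard Arzel\`{a}--Ascoli argument (diagonal extraction over a countable dense subset of $[0,1]$ using sequential compactness of $X$, an $\epsilon/3$ estimate giving the uniform Cauchy property, completeness of the compact space $X$ to produce the limit, and continuity of the limit either as a uniform limit of continuous maps or directly from equicontinuity, which indeed forces each $\gamma_{n}$ to be continuous under the paper's definition). The paper itself does not prove this lemma but simply cites Bridson--Haefliger, Lemma I.3.10, and your argument is essentially the proof given there, so there is nothing to compare beyond noting that you have supplied in full the standard proof the paper outsources.
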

\begin{proof}
See for example \cite{BH} Chapter I Lemma 3.10.
\end{proof}
\begin{proof}[Proof of Proposition \ref{linj}.] 
Suppose $l_{X} < \iota_{X}$. Then there exist a locally geodesic path $\gamma \colon [0, L] \to X$ which is not a geodesic, satisfying $l_{X} \leq |\gamma| < \iota_{X}$. Let $0< a <1$ be the supremum of the number $0\leq t < 1$ such that $\gamma|_{[0, t]}$ is a geodesic. Then $\gamma|_{[0, a]}$ is a geodesic by the continuity of $d$. Let $\alpha$ be a positive number such that $\gamma|_{[a, a + \alpha]} $ is a geodesic. We have $\alpha > 0$ by the assumption. We linearly reparametrize $\gamma|_{[0, a]}$ and $\gamma|_{[a, a + \alpha]}$ so that the domain of each map is $[0, 1]$. Let $\delta \colon [0, 1] \to X$ be the linearly reparametrized geodesic connecting $\gamma(0)$ and $\gamma(a + \alpha)$. Let $\gamma_{s} \colon [0, 1] \to X$ be the linearly reparametrized geodesic connecting $\gamma(a)$ and $\delta(s)$ for $0 \leq s \leq 1$. Note that $\gamma|_{[0, a]} = \gamma_{0}$ and $\gamma|_{[a, a + \alpha]} = \gamma_{1}$. We show that the maps $\{\gamma_{s}\}$ varies continuously with $s$. Let $\{s_{n}\}$ be a sequence of points of the interval $[0, 1]$ converging to a point $s' \in [0, 1]$. Let $\Theta$ be the set of subsets $\theta$ in $\mathbb{N}$ such that the subsequence $\{\gamma_{s_{i}} \mid i \in \theta\}$ uniformly converges. We have $\Theta \not= \emptyset$ by Lemma \ref{aa}. Since $d(\gamma(a), \delta(s')) < \iota_{X}$, the convergent curve of $\theta$'s are unique, and it is $\gamma_{s'}$. Hence $\Theta$ is a inductive set, and there exist a maximal element $\overline{\theta}$ in $\Theta$ by Zorn's lemma. If $\mathbb{N} - \overline{\theta}$ is an infinite set, there eixst subset of this which is an element of $\Theta$ by Lemma \ref{aa}. It contradicts to the maximality of $\overline{\theta}$. Therefore the sequence $\{\gamma_{s_{n}}\}$ uniformly coverges to $\gamma_{s'}$, which implies geodesics varies continuously. Then by Proposition \ref{smoothin}, the angles of $\Delta_{\gamma(0)\gamma(a)\gamma(a + \alpha)}$ are no greater than the corresponding angles of comparison triangle in $S_{\kappa}$. Since $\gamma$ is locally geodesic, the angle at $\gamma(a)$ is $\pi$, hence the comparison triangle is degenerated. Thus the angles at $\gamma(0)$ and $\gamma(a + \alpha)$ are both $0$. Let $p$ be a point on $\delta$ apart from $\gamma(a + \alpha)$ by $\alpha$. Then by Proposition \ref{alex} and Proposition \ref{smoothin}, the angle at $\gamma(a + \alpha)$ of the comparison triangle $\tilde{\Delta}_{\gamma(a)p\gamma(a + \alpha)}$ in $S_{\kappa}$ is 0. Hence $\tilde{\Delta}_{\gamma(a)p\gamma(a + \alpha)}$ is degenerated, and we obtain $\gamma(a) = p$. Then we have $\gamma|_{[0, a + \alpha]} = \delta$ by $|\gamma| < \iota_{X}$, which contradicts to the definition of $a$. Therefore we obtain $l_{X} \geq \iota_{X}$.
\end{proof}
\begin{prp}\label{mi}
Let $(X ,d)$ be a proper geodesic metric space of curvature $\leq \kappa$. If the injectivity radius $\iota_{X}$ is no greater than $D_{\kappa}$, then we have  
\[
 \iota_{X} \leq m_{X}.
\]
\end{prp}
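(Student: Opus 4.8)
The plan is to imitate the proof of the second inequality $m_X \leq l_X$ in Theorem \ref{ineq}, but to feed in Proposition \ref{linj} in place of Lemma \ref{lem1}, since here $X$ is only assumed to be of curvature $\leq \kappa$ rather than globally ${\rm CAT}(\kappa)$. So I argue by contradiction: suppose $m_X < \iota_X$. Then there is a $4$-cut $x = (x_0, x_1, x_2, x_3)$ in $X$ with $|x| < \iota_X$, and hence also $|x| < D_\kappa$ because of the standing hypothesis $\iota_X \leq D_\kappa$. Since $x$ is proper and a $4$-cut, each of the six pairwise distances among $x_0, \dots, x_3$ is strictly smaller than $|x|$ (for instance $d(x_0, x_2) = d(x_0,x_1) + d(x_1,x_2) < |x|$ because $d(x_2, x_3) > 0$, and $d(x_0, x_3) < |x|$ by the $4$-cut condition); in particular every pair among these four points is joined by a \emph{unique} geodesic.

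Next I would place $x_1$ on the geodesic $[x_0, x_2]$ and $x_2$ on the geodesic $[x_1, x_3]$. Indeed, since $d(x_0, x_2) = d(x_0, x_1) + d(x_1, x_2)$, the concatenation $[x_0,x_1] * [x_1,x_2]$ is a path of length $d(x_0, x_2)$, hence an arc-length-reparametrized geodesic, hence equal to $[x_0, x_2]$ by uniqueness; likewise $[x_1,x_2] * [x_2,x_3]$ is the geodesic $[x_1, x_3]$. Concatenating all three segments, the path $\gamma := [x_0,x_1] * [x_1,x_2] * [x_2,x_3]$ from $x_0$ to $x_3$ is locally geodesic: on the interior of each segment this is clear, at the breakpoint $x_1$ it holds because a sufficiently short sub-arc of $\gamma$ around $x_1$ is a sub-arc of the geodesic $[x_0, x_2]$, and at $x_2$ because a short sub-arc of $\gamma$ around $x_2$ is a sub-arc of the geodesic $[x_1, x_3]$.

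Finally, $|\gamma| = d(x_0,x_1) + d(x_1,x_2) + d(x_2,x_3) = |x| < \iota_X \leq l_X$, where the last inequality is Proposition \ref{linj}. By the very definition of $l_X$ as the infimum of lengths of locally geodesic non-geodesic paths, a locally geodesic path of length $< l_X$ must be a genuine geodesic; therefore $d(x_0, x_3) = |\gamma| = |x|$, contradicting the $4$-cut condition $d(x_0, x_3) < |x|$. Hence $\iota_X \leq m_X$.

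I expect the only genuinely delicate point — and it is minor — to be the verification that $\gamma$ is locally geodesic at $x_1$ and $x_2$, i.e. making precise that the two geodesic segments incident to each breakpoint glue into an honest geodesic sub-arc; this is exactly where uniqueness of geodesics below $\iota_X$ is used, since it forces each pair of incident segments to be sub-arcs of the single geodesic joining the two outer points. Everything else is a routine reduction to Proposition \ref{linj}.
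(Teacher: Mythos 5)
Your proof is correct, but it takes a genuinely different and more elementary route than the paper's. The paper imports the machinery from the proof of Proposition \ref{linj}: it first establishes (via an Arzel\`a--Ascoli/continuity argument and Proposition \ref{smoothin}) that the comparison triangle $\tilde{\Delta}_{x_0x_1x_2}$ is degenerate, concluding that $x_1$ lies on the image of a chosen geodesic $\gamma_{02}$, and similarly that $x_2$ lies on $\gamma_{13}$; it then uses uniqueness of geodesics below $\iota_X$ to make $\gamma_{02}$ and $\gamma_{13}$ agree on the arc from $x_1$ to $x_2$, glues them into a locally geodesic path, and finishes exactly as you do with $|x| < \iota_X \leq l_X$. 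You replace all of that with the purely metric observation that if $d(x_0,x_2)=d(x_0,x_1)+d(x_1,x_2)$ then the arc-length concatenation $[x_0,x_1]\ast[x_1,x_2]$ is itself an isometric embedding (upper bound by the triangle inequality through $x_1$, lower bound by the triangle inequality through the two intermediate points against $d(x_0,x_2)$), so local geodesicity at each breakpoint is immediate. This needs no curvature hypothesis and in fact proves $l_X\leq m_X$ for an arbitrary geodesic metric space, so that Proposition \ref{mi} reduces to a one-line corollary of Proposition \ref{linj}; the curvature and injectivity-radius assumptions enter only there. Two cosmetic remarks: your appeal to uniqueness of geodesics is not actually needed (the concatenation is a geodesic outright, provided you use the same middle segment $[x_1,x_2]$ in both concatenations), and the observation that $|x|<D_\kappa$ is never used in your argument. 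The endgame --- a locally geodesic path of length $<l_X$ must be a geodesic, contradicting $d(x_0,x_3)<|x|$ --- coincides with the paper's.
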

\begin{proof}
Suppose $m_{X} < \iota_{X}$. Then there exist a 4-cut $x = (x_{0}, x_{1}, x_{2}, x_{3})$ with $m_{X} \leq |x| < \iota_{X}$. Let $\gamma_{01}, \gamma_{12}$ and $\gamma_{02}$ be the linearly reparametrized geodesics connecting each pair of points $x_{0}, x_{1}$ and $x_{2}$. By the similar argument in the proof of Proposition \ref{linj}, the geodesics connecting $x_{1}$ and points on $\gamma_{02}$ varies continuously. Then by Proposition \ref{smoothin}, the angles of $\Delta_{x_{0}x_{1}x_{2}}$ are no greater than the corresponding angles of comparison triangle in $S_{\kappa}$. Note that the comparison triangle $\tilde{\Delta}_{x_{0}x_{1}x_{2}}$ is degenerated because it is $\kappa$-small and $x_{1}$ is smooth between $x_{0}$ and $x_{2}$, hence $x_{1}$ is on the image of $\gamma_{02}$ by the similar argument in the proof of Proposition \ref{linj}. Similarly, we obtain a geodesic $\gamma_{13}$ connecting $x_{1}$ and $x_{3}$, and going through $x_{2}$. Then geodesics $\gamma_{02}$ and $\gamma_{13}$ coincide between $x_{1}$ and $x_{2}$ by the assumpiton $m_{X} < \iota_{X}$, hence we obtain a locally geodesic path by glueing them. The obtained path turns out to be a geodesic since we have $|x| < \iota_{X} \leq l_{X}$ by Proposition \ref{linj}, which contradicts that $d(x_{0}, x_{3}) < |x|$.
\end{proof}
\begin{cor}\label{locat}
Let $X$ be a cocompact proper geodesic metric space of curvature $\leq \kappa$ which is not {\rm CAT}$(\kappa)$, or the standard sphere. Then for every $n > 0$ and every $0 < l < \iota_{X} = {\rm Sys}(X)/2$, the magnitude homology $MH^{l}_{n}(X)$ vanishes.
\end{cor}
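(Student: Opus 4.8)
The plan is to combine the estimate $\iota_X\le m_X$ of Proposition \ref{mi} with the description of $MH^l_n$ for $l<m_X$ furnished by Theorems \ref{yk1} and \ref{yk2}, treating separately the case where $X$ is not {\rm CAT}$(\kappa)$ and the case of the standard sphere. The sphere is immediate: for $\mathbb{S}^n$ of constant curvature $\kappa$ one has $\iota_{\mathbb{S}^n}=D_\kappa$ and ${\rm Sys}(\mathbb{S}^n)=2D_\kappa$, so the range $0<l<\iota_X={\rm Sys}(X)/2$ is exactly $0<l<D_\kappa$, and since $\mathbb{S}^n$ is a geodesic {\rm CAT}$(\kappa)$ space, Corollary \ref{vanish} already gives $MH^l_n(\mathbb{S}^n)=0$ there. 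So it remains to handle $X$ cocompact, proper, geodesic, of curvature $\le\kappa$, and not {\rm CAT}$(\kappa)$.

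In that case Proposition \ref{gromov} provides a closed geodesic of length $<2D_\kappa$, hence also one of length ${\rm Sys}(X)=2\iota_X$; in particular $\iota_X={\rm Sys}(X)/2<D_\kappa$, so the hypothesis $\iota_X\le D_\kappa$ of Proposition \ref{mi} holds and we get $\iota_X\le m_X$. Now fix $n>0$ and $0<l<\iota_X\le m_X$. By Theorem \ref{yk1}(2) we have $MH^l_n(X)\cong MH^{{\rm simp},l}_{n}(X)$, and by Theorem \ref{yk1}(1) this equals $\bigoplus_{F\in P^l_{\le n}(X)}MH^F_n(X)$; so it suffices to prove $MH^F_n(X)=0$ for every proper chain $F=(x_0,\dots,x_m)$ with $|F|=l$.

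For such an $F$, every consecutive distance satisfies $d(x_i,x_{i+1})\le|F|=l<\iota_X$, so $x_i$ and $x_{i+1}$ are joined by a unique geodesic; exactly as in the proof of Corollary \ref{vanish}, any smooth point between them must lie on that geodesic, so the interval poset $I(x_i,x_{i+1})$ is nonempty and totally ordered. Hence each order complex $\Delta(I(x_i,x_{i+1}))$ is a simplex, its reduced chain complex $C_{*}(\Delta_{i,i+1})$ is chain contractible, and therefore so is the tensor product $C_{*}(\Delta_{0,1})\otimes\cdots\otimes C_{*}(\Delta_{m-1,m})$. By Theorem \ref{yk2}, $MH^F_n(X)$ is a homology group of this acyclic complex, hence $0$, and we conclude $MH^l_n(X)=0$.

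The only steps requiring care are checking that Proposition \ref{mi} applies, i.e.\ that $\iota_X\le D_\kappa$ --- this is precisely where the non-{\rm CAT}$(\kappa)$ hypothesis is used, via Proposition \ref{gromov} --- and isolating the standard sphere as the genuine boundary case (closed geodesics of length exactly $2D_\kappa$) not covered by Proposition \ref{gromov}. Beyond this bookkeeping I expect no real obstacle: the acyclicity of the tensor product is routine, each factor being a bounded-below complex of free abelian groups that is chain homotopy equivalent to the zero complex.
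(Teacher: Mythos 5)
Your proof is correct and follows essentially the same route as the paper: the sphere case is delegated to Corollary \ref{vanish}, and in the non-${\rm CAT}(\kappa)$ case Proposition \ref{gromov} yields $\iota_X={\rm Sys}(X)/2<D_\kappa$ so that Proposition \ref{mi} gives $\iota_X\le m_X$, after which Theorems \ref{yk1} and \ref{yk2} reduce everything to the total orderedness of the interval posets $I(x_i,x_{i+1})$ for $d(x_i,x_{i+1})<\iota_X$. You have merely spelled out the details that the paper compresses into ``the similar argument in the proof of Corollary \ref{vanish}'', correctly replacing the ${\rm CAT}(\kappa)$ comparison-triangle step there by uniqueness of geodesics below the injectivity radius.
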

\begin{proof}
For a standard sphere, it follows from Corollary \ref{vanish}. For $X$ being a proper geodesic metric space of curvature $\leq \kappa$ which is not {\rm CAT}$(\kappa)$, we have $\iota_{X} = {\rm Sys}(X)/2 < D_{\kappa}$ by Proposition \ref{gromov}. Furthermore, we have $\iota_{X} \leq m_{X}$ by Proposition \ref{mi}. Hence the statement follows from the similar argument in the proof of Corollary \ref{vanish} and Proposition \ref{mi}.
\end{proof}
\begin{ex}
As mentioned in Example \ref{ex4}, the standard projective space $\mathbb{RP}^{n}$ for $n \geq 2$ is of curvature $\geq 1$ and is not ${\rm CAT}(1)$. Furthermore, we can immediately check that ${\rm Sys}(\mathbb{RP}^{n}) = \pi$. Hence by Corollary \ref{locat}, we obtain
\[
MH^{l}_{\ast}(\mathbb{RP}^{n}) = 0,
\]
for $0 < l < \pi/2$ and $\ast > 0$.
\end{ex}


\begin{thebibliography}{99}
\bibitem{BH} M. R. Bridson and A. Haefliger, {\it Metric spaces of non-positive curvature}. Springer-Verlag, Berlin, 1999.
\bibitem{G} K. Gomi, {\it Magnitude homology of geodesic space}. preprint, arXiv:1902.07044.
\bibitem{HW}R. Hepworth and S. Willerton, {\it Categorifying the magnitude of a graph}. Homology Homotopy Appl. 19 (2017), no. 2, 31–60. arXiv:1505.04125.
\bibitem{L}T. Leinster, {\it The magnitude of metric spaces}. Doc. Math. 18 (2013), 857–905.
\bibitem{LS}T. Leinster and M. Shulman, {\it Magnitude homology of enriched categories and metric spaces}. preprint, arXiv:1711.00802.
\bibitem{YK} R. Kaneta and M. Yoshinaga, {\it Magnitude homology of metric spaces and order complexes}. preprint, arXiv:1803.04247, 2018.
\end{thebibliography}
\end{document}